\documentclass[11pt,letterpaper]{amsart}

\usepackage[T1]{fontenc}
\usepackage{lmodern}
\usepackage{microtype}
\usepackage{mathtools}
\usepackage{amssymb}
\usepackage{mathrsfs}
\usepackage{enumitem}
\usepackage{url}
\usepackage[colorlinks=true,linkcolor=blue,citecolor=blue,urlcolor=blue]{hyperref}
\usepackage{aliascnt}

\numberwithin{equation}{section}

\newtheorem{theorem}{Theorem}[section]
\newaliascnt{proposition}{theorem}
\newtheorem{proposition}[proposition]{Proposition}
\aliascntresetthe{proposition}
\newaliascnt{lemma}{theorem}
\newtheorem{lemma}[lemma]{Lemma}
\aliascntresetthe{lemma}
\newaliascnt{corollary}{theorem}
\newtheorem{corollary}[corollary]{Corollary}
\aliascntresetthe{corollary}
\theoremstyle{definition}
\newaliascnt{definition}{theorem}
\newtheorem{definition}[definition]{Definition}
\aliascntresetthe{definition}
\theoremstyle{remark}
\newaliascnt{remark}{theorem}

\aliascntresetthe{remark}

\newcommand{\Ric}{\operatorname{Ric}}
\newcommand{\Rm}{\operatorname{Rm}}
\newcommand{\inj}{\operatorname{inj}}
\newcommand{\Hess}{\operatorname{Hess}}

\newcommand{\Vol}{\operatorname{Vol}}
\newcommand{\Int}{\operatorname{Int}}
\newcommand{\genus}{\operatorname{genus}}
\newcommand{\spt}{\operatorname{spt}}

\newcommand{\tr}{\operatorname{tr}}

\newcommand{\eps}{\varepsilon}

\newcommand{\Q}{\mathbb Q}
\newcommand{\R}{\mathbb R}
\newcommand{\Z}{\mathbb Z}
\newcommand{\dd}{\,d}

\title[Contractible 3-Manifolds and Handlebody Interiors]{Topological Rigidity of Contractible 3-Manifolds and Handlebody Interiors under Nonnegative Scalar Curvature}
\author{Jiangcheng You and Heng Zhang}
\date{July 26, 2026}

\address{School of Mathematical Sciences, University of Science and Technology of China, Hefei, China}
\email{yjcmp@mail.ustc.edu.cn}

\address{School of Mathematical Sciences, University of Science and Technology of China, Hefei, China}
\email{hengz@mail.ustc.edu.cn}

\subjclass[2020]{Primary 53C21; Secondary 53C24, 31C12, 57K30}
\keywords{nonnegative scalar curvature, open 3-manifold,
Dirichlet Green function, Robin constant, Evans potential}

\hypersetup{
  pdftitle={Topological Rigidity of Contractible 3-Manifolds and Handlebody Interiors under Nonnegative Scalar Curvature},
  pdfauthor={Jiangcheng You and Heng Zhang}
}

\begin{document}

\begin{abstract}
We prove that a contractible $3$-manifold admitting a complete Riemannian metric of nonnegative scalar curvature is diffeomorphic to $\R^3$. We also prove that, if the interior $M_\gamma$ of a compact handlebody of genus $\gamma$ admits such a metric, then $\gamma\leq1$. This answers two open questions posed by Wang and Gromov, respectively.
\end{abstract}

\maketitle
\tableofcontents

\section{Introduction and statements of the results}\label{sec:introduction}

The classification of complete open $3$-manifolds admitting metrics of nonnegative scalar curvature is a basic problem in scalar-curvature geometry. The foundational work of Schoen--Yau and Gromov--Lawson gives strong topological obstructions \cite{SchoenYau,GromovLawson}, but a general description of the possible open $3$-manifolds is still not known; see Yau's problem list \cite{YauProblems} and the recent discussions in \cite{CLX,YanZhu}. Two natural test cases have played a central role.

\medskip
\noindent\textbf{Wang's question \cite{WangI,WangII}.}
If a contractible $3$-manifold admits a complete Riemannian metric with nonnegative scalar curvature, must it be diffeomorphic to $\R^3$?

\medskip
\noindent\textbf{Gromov's question \cite[\S3.10.2]{GromovLectures}.}
Let $M_\gamma$ be the interior of a compact handlebody of genus $\gamma$. If $M_\gamma$ admits a complete Riemannian metric with nonnegative scalar curvature, must one have $\gamma\leq1$?

\medskip
Chodosh--Lai--Xu \cite{CLX} answered both questions affirmatively under the bounded-geometry assumption
\begin{equation}\label{eq:intro-BG}
 |\Rm_g|\leq\Lambda,
 \qquad
 \inj_g\geq\Lambda^{-1}.
\end{equation}
Their principal geometric tool is the maximal weak inverse mean curvature flow. By combining Geroch monotonicity, compactness, and metric perturbations in the nonproper alternatives, they construct exhaustions whose relevant boundary components are spheres or tori.

Yan--Zhu subsequently replaced \eqref{eq:intro-BG} by substantially weaker analytic assumptions \cite{YanZhu}. They assume a lower Ricci curvature bound
\begin{equation*}
 \Ric_g\geq Kg,
 \qquad K\leq0,
\end{equation*}
and, in the nonparabolic case, require the minimal positive Green function $G(p,\cdot)$ to vanish at infinity:
\begin{equation}\label{eq:intro-Green-decay}
 G(p,x)\longrightarrow0
 \qquad\text{as }d_g(p,x)\longrightarrow\infty.
\end{equation}
Their proof uses harmonic level-set exhaustions. In the parabolic case, a proper harmonic barrier on the end is combined with a Cheng--Yau gradient estimate, and the Ricci lower bound enters precisely in that estimate. In the nonparabolic case, \eqref{eq:intro-Green-decay} makes the Green-distance function $b=G^{-1}$ proper, allowing the scalar-curvature level-set formulas of Colding--Minicozzi \cite{CM} to be applied globally.

The present paper answers both questions without any auxiliary hypothesis.

\begin{theorem}[Contractible case]\label{thm:contractible-main}
Let $(M^3,g)$ be a contractible complete Riemannian $3$-manifold satisfying $R_g\geq0.$
Then $M$ is diffeomorphic to $\R^3$.
\end{theorem}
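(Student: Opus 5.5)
We plan to reduce the statement to a topological exhaustion criterion. A contractible open $3$-manifold $M$ is diffeomorphic to $\R^3$ if and only if it is \emph{missing-sphere-free at infinity}: every compact subset lies in a compact submanifold bounded by embedded $2$-spheres. By Alexander's theorem and irreducibility (Perelman), each such region is then a $3$-ball, and an increasing union of balls is $\R^3$. It is in fact enough to produce an exhaustion $\Omega_1\Subset\Omega_2\Subset\cdots$ by compact domains whose boundary components all have genus $\le 1$. Torus components are then handled as in \cite{CLX,YanZhu}. Since $M$ is contractible, each torus bounds a region that is either a solid torus or a knot complement in a homotopy ball. The knot-complement alternative is ruled out by incompressibility together with the Schoen--Yau obstruction, since $R_g\ge0$ excludes incompressible tori in an open manifold with $R\ge0$ after the standard perturbation argument. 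The solid tori can then be compressed to produce sphere boundaries. The whole problem is therefore to build a genus-controlled exhaustion without the hypotheses \eqref{eq:intro-BG} or \eqref{eq:intro-Green-decay}.

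For the exhaustion we will use harmonic functions built from Dirichlet Green functions on bounded domains, rather than the global Green function. Fix $p$ and an exhaustion $D_k$ of $M$ by smooth bounded domains. Let $G_k$ be the Dirichlet Green function of $D_k$ with pole $p$, and write
$$G_k(p,x)=\frac{1}{4\pi d(p,x)}-\lambda_k+o(1),$$
where $\lambda_k$ is the Robin constant. On each $D_k$ the function $b_k=G_k^{-1}$ is automatically proper. The Colding--Minicozzi / Stern-type level-set identity then gives the following for a.e.\ regular value $t$:
$$\int_{\{b_k=t\}}\Big(\tfrac12 R_g+\tfrac12|\mathring{\Hess}|^2/|\nabla b_k|^2\Big)\le 2\pi\chi(\{b_k=t\})+(\text{monotone boundary term}).$$
Combined with $R_g\ge0$ and a monotone quantity of the form $F_k(t)=t^{-1}\int|\nabla b_k|^2-4\pi t$, this shows that regular level sets have Euler characteristic contributions forcing some component of genus $\le1$ in each relevant interval. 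The improvement we need is that every component separating $p$ from $\partial D_k$ is controlled.

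Next we pass to the limit $k\to\infty$. If $M$ is parabolic, $\lambda_k\to\infty$ and $G_k-\lambda_k$ converges locally to an Evans-type potential, which is a proper harmonic function tending to $-\infty$ at infinity; this replaces the Cheng--Yau argument. If $M$ is nonparabolic, $G_k\uparrow G$, but $G$ need not be proper. Here the $k$-dependent proper levels will serve instead: for a fixed compact $K$, choose $k$ large and a level $t$ such that $\{b_k<t\}\supset K$. This gives a domain containing $K$ whose boundary is a union of level components of genus $\le1$. The key is that no smooth limit is needed, only the estimate uniformly in $k$ for one level at a time.

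The main obstacle is the Euler-characteristic estimate. The level-set formula controls the \emph{sum} of $\chi$ over components, weighted by integrals, and not each component separately. High-genus components could in principle hide on levels with small weight. To handle this, we would first use that $H_1(M)=0$, so each level component separates, and localize the monotonicity to the region bounded by a single component, applying the formula to $b_k$ restricted there with flux $\int|\nabla b_k|$ fixed. If that fails, our fallback is a perturbation/surgery argument: replace a bad component by minimizing its area in its isotopy class in the complement of $K$. This yields stable minimal surfaces, which by Schoen--Yau have genus $\le1$ when $R_g\ge0$.
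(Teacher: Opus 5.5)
There is a genuine gap at the core: nothing in your plan actually forces low genus. For a single Dirichlet Green function on a bounded domain $D_k$, the Colding--Minicozzi/Stern identity yields no contradiction if every large level has genus $\geq2$. The Gauss--Bonnet gap only gives $A(r)\gtrsim r^2$ (Proposition~\ref{prop:uniform-quadratic}), and that is exactly the true behavior, since $A(r)/r^2\to\int_{\partial D_k}q_k^2\dd A_g>0$ (Proposition~\ref{prop:boundary-flux}). So two of your claims are unsupported: that the identity forces a component of genus $\leq1$ in each interval, and that a level with $\{b_k<t\}\supset K$ has genus $\leq1$. ``One level at a time, uniformly in $k$'' is precisely what needs an argument.

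The paper supplies it by contradiction through a global quantity:
\begin{enumerate}[label=\textup{(\arabic*)}]
\item Assume a core $K_0$ all of whose admissible separators have genus $\geq2$.
\item Domain monotonicity gives a threshold $r_*$, independent of the outer domain, beyond which every Green level is such a separator.
\item The Riccati comparison then gives $\int_{\partial D_\tau}q_\tau^2\dd A_g\geq c_*$ uniformly in $\tau$.
\item Hadamard's formula along a bounded-gradient Morse exhaustion (normal speed $\geq L^{-1}$) makes the Robin constant grow linearly in $\tau$.
\item Nonparabolicity caps the Robin constant by $m_M(p)<\infty$, a contradiction.
\end{enumerate}
The Robin constant, its shape derivative, the uniform boundary speed and the global barrier are all absent from your proposal.

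Conversely, your ``main obstacle'' is not one. Since $b_1(M)=0$, and the maximum principle makes $U_r$ and $M\setminus U_r$ connected, every regular Green level is connected (Lemma~\ref{lem:green-connectivity}). Your fallback would fail anyway. $\R^3$ has no closed minimal surfaces, and isotopy-class minimization in $M\setminus K$ collapses, compresses into surfaces that no longer enclose $K$, or sticks to the obstacle. So the Schoen--Yau genus bound for stable minimal surfaces is never reached.

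The parabolic case is a second gap. Renormalizing $G_k$ by its Robin constant and using levels of a limiting Evans-type potential is the Yan--Zhu route. There the boundary terms are controlled by a Cheng--Yau gradient estimate that requires $\Ric_g\geq Kg$; saying your approach ``replaces the Cheng--Yau argument'' supplies no substitute. The paper never works with a parabolic metric. Since the conclusion is topological, it changes the metric:
\begin{itemize}
\item when $\Ric_g\not\equiv0$, Kazdan's deformation gives a uniformly equivalent complete metric with $R>0$;
\item a conformal factor $F_\eps(u)$ built from an exterior Evans potential then preserves $R>0$ and completeness while giving a compact set positive capacity (Theorem~\ref{thm:metric-replacement});
\item the flat case $\Ric_g\equiv0$ gives $(M,g)\cong\R^3$ directly.
\end{itemize}

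Finally, your torus step has the two alternatives reversed. A knot-exterior region already occurs in Euclidean $\R^3$ and is harmless: compressing its boundary torus on the outside produces a sphere bounding a ball that contains it. Nested solid tori are where $R_g\geq0$ is essential, since the Whitehead manifold is an increasing union of solid tori. That is the content of Wang's theorem, which the paper cites.
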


\begin{theorem}[Open-handlebody case]\label{thm:handlebody-main}
Let $M_\gamma$ be the interior of a compact handlebody of genus $\gamma$. If $M_\gamma$ admits a complete Riemannian metric $g$ satisfying
$
 R_g\geq0,
$
then
$
 \gamma\leq1.
$
\end{theorem}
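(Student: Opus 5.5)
The plan for Theorem~\ref{thm:handlebody-main} is to follow the Yan--Zhu strategy of harmonic level-set exhaustions, and to replace their two auxiliary hypotheses with potential-theoretic objects that exist on every complete manifold: Dirichlet Green functions on large balls with their Robin constants, and Evans potentials. As in \cite{CLX,YanZhu}, the topological target is an exhaustion $\Omega_1\Subset\Omega_2\Subset\cdots$ of $M_\gamma$ by compact domains whose boundaries consist of spheres and tori. Granting such an exhaustion, the topological part is the argument already in \cite{CLX}. Since $\pi_1(M_\gamma)$ is free of rank $\gamma$, an incompressible torus would force $\Z^2\hookrightarrow F_\gamma$, which is impossible. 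So every torus component compresses, and after surgery we may assume all boundary components are spheres. For $\gamma\geq2$ such an exhaustion would make the end of $M_\gamma$ look like a union of punctured-ball ends. This contradicts the fact that $M_\gamma$ has a single end with fundamental group at infinity surjecting onto $F_\gamma$, through the genus-$\gamma$ surfaces $\partial H_\gamma\times\{t\}$. Hence $\gamma\leq1$.

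The analytic core is producing the exhaustion. \textbf{Parabolic case.} Here I would use an Evans potential $u$ on the end: a harmonic function, proper, tending to $+\infty$, built as a limit of normalized Dirichlet solutions on annuli. Its existence on a parabolic manifold needs no curvature assumption (Nakai/Sario type construction). \textbf{Nonparabolic case.} Instead of assuming \eqref{eq:intro-Green-decay}, I would work on balls $B_{R_i}$ with Dirichlet Green functions $G_i$. Writing $G_i = c_i+h_i$, where $c_i$ is a Robin-type constant, I would take the normalized limit $b=\lim G_i^{-1}$ after suitable renormalization, which stays proper where the minimal Green function may fail to be. In both cases I would apply the Stern/Colding--Minicozzi level-set identity
\[
\int_{\{u=t\}}\Bigl(\tfrac12\tfrac{|\nabla^2u|^2}{|\nabla u|^2}+\tfrac12R_g\Bigr)\le 2\pi\sum\chi(\Sigma_t)+\text{boundary flux terms}
\]
on regular levels. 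For almost every $t$, it forces some component of $\{u=t\}$ to have $\chi\ge0$, i.e.\ to be a sphere or torus. Using Sard's theorem and the monotonicity of the flux terms, I would then select the components that separate the compact core from infinity.

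The step I expect to be hardest is controlling the boundary terms without Cheng--Yau estimates: integrated quantities like $\int_{\{u=t\}}|\nabla u|$ must be handled by flux identities, since pointwise gradient bounds are unavailable. Concretely, I would first try to show that the flux $\int_{\{u=t\}}|\nabla u|$ is constant in $t$ (this holds for $u$ harmonic), and that the term $\int |\nabla u|^2 H$-type contributions are nonpositive after integrating in $t$ over a long interval. That would give an averaged version of the inequality, and choosing good $t$ by averaging would avoid any pointwise estimate. Properness of the Robin-normalized $b$ in the nonparabolic case is the second delicate point. I would prove it by comparing $G_i$ with capacities of complements of compact sets, using the Evans potential construction on the parabolic pieces of each end.
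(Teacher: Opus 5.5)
There is a genuine gap, and it sits exactly at the step you flag as hardest. For harmonic $u$ put $\Phi(t):=\int_{\{u=t\}}|\nabla u|^2\,dA$. The divergence theorem applied to $|\nabla u|\nabla u$ gives $\Phi'(t)=\int_{\{u=t\}}\partial_\nu|\nabla u|\,dA$, and this is precisely the boundary term in the Stern/Colding--Minicozzi inequality. Suppose the relevant levels are connected of genus $\geq2$. Then that inequality gives $\Phi'(t_2)-\Phi'(t_1)\geq4\pi(t_2-t_1)$, so $\Phi$ grows quadratically.

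Averaging in $t$ only trades $\Phi'$ for differences of $\Phi$. The conserved flux $\int_{\{u=t\}}|\nabla u|$ controls the wrong power of $|\nabla u|$. To reach a contradiction you need an upper bound on $\int_{\{u=t\}}|\nabla u|^2$, which is essentially a bound on $\sup|\nabla u|$. That is exactly what Cheng--Yau, and hence $\Ric\geq K$, supplied in Yan--Zhu, and your plan contains no substitute.

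The nonparabolic branch fails for a parallel reason. $G_{B_{R_i}}\uparrow G_M$ and the Robin constants increase to $m_M(p)<\infty$. Any additive or multiplicative renormalization therefore converges to an affine image of $G_M$ with the same level sets, so properness is not restored.

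The topological step is also incorrect as written. The torus-compression argument never uses $\gamma\geq2$, so it would equally exclude $S^1\times\R^2$, which carries a flat metric. The failure is that compressing along a meridian disk inside the domain destroys containment of the core. What you actually need is a \emph{connected} admissible separator enclosing a core with genus $\leq1$; knowing that ``some component has $\chi\geq0$'' is not enough. Given such a separator, the degree argument finishes: $\partial U\to\Sigma_\gamma$ has degree $\pm1$, which forces $\genus(\partial U)\geq\gamma$.

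The paper never bounds a level-set $L^2$ quantity from above. It first reduces to a nonparabolic metric.
\begin{itemize}
\item If $\Ric_g\equiv0$, the metric is flat. Then $F_\gamma$ cannot be the fundamental group for $\gamma\geq2$, since complete flat manifolds have fundamental groups of polynomial growth while $F_\gamma$ grows exponentially.
\item Otherwise Kazdan's theorem gives a uniformly equivalent metric with $R>0$. The Evans potential $u$ is then used not for level sets but as a conformal factor $\phi=F_\varepsilon(u)$, with $F_\varepsilon$ affine of slope $\varepsilon$ at infinity. This keeps $R>0$ and completeness, and gives a compact set positive capacity.
\end{itemize}
Then, assuming a compact $K_0$ all of whose admissible separators have genus $\geq2$, the paper works with Dirichlet Green functions of the filled domains $D_\tau$ of a bounded-gradient Morse exhaustion. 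There $b_\tau=G_{D_\tau}^{-1}$ is automatically proper. A compact detector and a Mayer--Vietoris count make every large level a connected separator of genus $\geq2$. The Colding--Minicozzi identity plus a Riccati comparison yields $A_\tau(r)\geq c_*r^2$ with $c_*$ independent of $\tau$. The boundary limit then gives $\int_{\partial D_\tau}q_\tau^2\geq c_*$, and Hadamard's formula forces $m_{D_\tau}(p)\to\infty$. This contradicts $m_{D_\tau}(p)\leq m_M(p)<\infty$.

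The idea missing from your proposal is this conversion of a uniform \emph{lower} bound on level-set energy into divergence of a Robin constant that nonparabolicity keeps finite.
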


Throughout, manifolds are smooth, connected, and without boundary unless
stated otherwise. Homology is taken with coefficients in $\Q$, and
$b_1(M):=\dim_{\Q}H_1(M;\Q)$ denotes the first Betti number of $M$.
A complete manifold is called \emph{nonparabolic} if it admits a minimal
positive Green function.

The argument has three components: a topological reduction to the
low-genus separator property $\mathrm{(LG)}$; an analytic theorem showing
that a given complete, one-ended, nonparabolic metric with nonnegative
scalar curvature and finite first Betti number forces $\mathrm{(LG)}$;
and a metric-replacement theorem reducing the general nonflat case to
this nonparabolic setting. We begin with the topological reduction.

\begin{definition}[Admissible separator]\label{def:admissible}
Let $M$ be an orientable open $3$-manifold. A domain $U\Subset M$ is called an \emph{admissible separator} if
\begin{enumerate}[label=\textup{(\roman*)}]
\item $U$ has smooth compact boundary;
\item $U$ is connected;
\item $M\setminus U$ is connected;
\item $\partial U$ is connected.
\end{enumerate}
\end{definition}

\begin{definition}[Low-genus separator property]\label{def:low-genus-property}
An orientable open $3$-manifold $M$ is said to satisfy property $\mathrm{(LG)}$ if every compact set $K\Subset M$ is contained in an admissible separator $U$ such that
$$
 \genus(\partial U)\leq1.
$$
\end{definition}

The following  fixed-metric result is precisely a criterion for property $\mathrm{(LG)}$.

\begin{theorem}[Nonparabolic low-genus separators]\label{thm:low-genus}
Let $(M^3,g)$ be a complete, orientable, one-ended, nonparabolic Riemannian manifold satisfying
$$
 R_g\geq0,
 \qquad
 b_1(M)<\infty.
$$
Then $M$ satisfies property $\mathrm{(LG)}$.
\end{theorem}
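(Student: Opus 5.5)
The plan is to exhaust $M$ by superlevel sets of the minimal positive Green function and use the Stern / Bray--Kazaras--Khuri--Stern level-set formula to bound the genus of generic level sets. The key difficulty, compared with Yan--Zhu, is that we do not assume $G(p,x)\to0$ at infinity. So the sets $\{G>t\}$ need not be precompact, and we cannot use the global Colding--Minicozzi identities directly.

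\textbf{Step 1: replace the Green function by Dirichlet Green functions.} Fix $p$ and an exhaustion $\Omega_1\Subset\Omega_2\Subset\cdots$ of $M$ by smooth precompact domains, with $\Omega_i$ containing the given compact set $K$ for large $i$. Let $G_i$ be the Dirichlet Green function of $\Omega_i$ with pole $p$. Then $G_i\uparrow G$ locally smoothly away from $p$, and each $u_i=G_i$ is proper on $\Omega_i\setminus\{p\}$ and vanishes on $\partial\Omega_i$. Nonparabolicity gives $G<\infty$, so we obtain uniform control on compact sets. For regular values $t$, the level sets $\Sigma_t=\{u_i=t\}$ are closed surfaces in $\Omega_i$ that bound $\{u_i>t\}\ni p$.

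\textbf{Step 2: the scalar-curvature inequality.} On $\{u_i>t\}$, apply Stern's identity $\Delta|\nabla u|=\frac{1}{2|\nabla u|}\big(|\nabla^2u|^2-|\nabla|\nabla u||^2+|\nabla u|^2(R_g-R_\Sigma)\big)$, after Gauss--Bonnet. This shows that on any interval $[t_1,t_2]$ of regular values the quantity $\int_{t_1}^{t_2}\sum_{\Sigma_t}2\pi\chi(\Sigma_t^{(j)})\,dt$ is bounded below by a boundary flux term minus $\int R_g$. Since $R_g\ge0$, each level set must contain components with $\chi>0$, or at worst $\chi=0$, in an averaged sense. The flux terms are the Robin-constant / Evans-potential quantities from the keywords. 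I would normalize $G_i$ via its Robin constant near $p$, so that the boundary term at $t_2=\infty$ (near the pole) equals $4\pi$, just as for $1/r$. Then for many $t$ the level set is forced to have a component of genus $\le1$ that carries the flux.

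\textbf{Step 3: topology.} One-endedness and $b_1<\infty$ are used here to upgrade ``some component has low genus'' to an admissible separator. Choose $t$ with $K\subset\{u_i>t\}$, which is possible by local uniform convergence for large $i$. Let $U$ be the component of $\{u_i>t\}$ containing $p$, and fill in all complementary components of $M\setminus U$ except the unbounded one; the unbounded one is unique because $M$ has one end. This makes $M\setminus U$ connected. Every boundary component of $U$ separates, and $b_1<\infty$ bounds the number of nonseparating handles. A Mayer--Vietoris argument then shows that, after taking $K$ large enough to carry $H_1(M)$, the boundary $\partial U$ is connected. The flux from Step 2 through $\partial U$ is the full flux $4\pi$, so the Euler-characteristic bound now applies to this single surface and gives $\genus(\partial U)\le1$.

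\textbf{Main obstacle.} The hardest point is making Step 2 quantitative uniformly in $i$ without decay of $G$. The flux through $\partial\Omega_i$ is $4\pi$, but the $L^1$ mass of $|\nabla u_i|$ on level sets near $\partial\Omega_i$ may concentrate, which could give bad error terms. To handle this, I would use the integral bound only on a middle window of levels $t\in[\tau,2\tau]$, with $\tau$ small but fixed. There, $\int_{\{u_i=t\}}|\nabla u_i|^2$ is bounded by the energy of $G$ outside $\{G>\tau\}$. The coarea formula then selects a good regular value by averaging.
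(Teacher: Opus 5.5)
Your Step~3 essentially matches the paper's topological input: Green superlevels and their complements are connected, and once the superlevel contains a compact set carrying $H_1(M)$, the compact detector lemma makes the level connected. The gap is in Step~2 and in how you use it at the end of Step~3.

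\textbf{Why Step~2 fails.} Integrating Stern's identity over a band $\{t_1<u<t_2\}$ with $R_g\ge0$ bounds $2\pi\int_{t_1}^{t_2}\chi(\Sigma_t)\,dt$ from below only in terms of the boundary terms $\int_{\Sigma_{t_j}}\partial_\nu|\nabla u|\,dA=\int_{\Sigma_{t_j}}\nabla^2u(\nu,\nu)\,dA$. These are unsigned, second-order (mean-curvature-weighted) quantities with no a priori bound. They are not the flux $\int_{\Sigma_t}|\nabla u|\,dA=4\pi$. That flux is the same on every level whatever its genus, so ``the full flux $4\pi$ through $\partial U$'' carries no genus information.

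In fact, Stern's inequality and $R_g\ge0$ alone cannot force low-genus levels. Take a genus-two handlebody domain $\Omega\subset\R^3$ with the flat metric. Every level $\{G_\Omega=t\}$ with $t$ small is a normal graph over $\partial\Omega$, so it has genus two.

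\textbf{Why the middle window does not help.} By coarea, averaging $\int_{\{u_i=t\}}|\nabla u_i|^2\,dA$ over $t\in[\tau,2\tau]$ gives $\int_{\{\tau<u_i<2\tau\}}|\nabla u_i|^3\,dV$, not an energy. Bounding it, or the Hessian boundary terms, uniformly in $i$ requires a Cheng--Yau gradient estimate, i.e.\ a Ricci lower bound. That is exactly the hypothesis the theorem removes. Relatedly, you use nonparabolicity only qualitatively ($G<\infty$). Also, the Robin constant is not a normalization you get to choose; it is the quantity the argument has to control.

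\textbf{What is missing.} You need a mechanism that turns ``all large levels have genus $\ge2$'' into a contradiction with nonparabolicity. The paper argues by contradiction as follows.
\begin{enumerate}[label=\textup{(\arabic*)}]
\item If $\mathrm{(LG)}$ fails, some compact $K_0$ lies only in admissible separators of genus $\ge2$.
\item Domain monotonicity gives a threshold $r_*$, uniform over the outer domains $D_\tau$, beyond which every regular level of $b_\tau=G_{D_\tau}^{-1}$ is such a separator.
\item The resulting Gauss--Bonnet gap $\kappa\le-4\pi$ is inserted into an exact Colding--Minicozzi identity, valid across critical levels, and a Riccati comparison for $a=rA'/A$. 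This gives $A_\tau(r)\ge c_*r^2$ with $c_*=c_*(b_1(M),r_*)$.
\item Boundary asymptotics convert this into $\int_{\partial D_\tau}q_\tau^2\,dA\ge c_*$.
\item Hadamard's formula along a bounded-gradient Morse exhaustion then forces the Robin constant $m_{D_\tau}(p)$ to grow at least linearly in $\tau$. This contradicts $m_{D_\tau}(p)\le m_M(p)<\infty$.
\end{enumerate}
That last bound is the one essential use of nonparabolicity. Your proposal has no analogue of this growth-versus-bounded-quantity contradiction; it tries instead to rule out genus $\ge2$ level by level, which the flat example shows cannot work.
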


The topological result is summarized by the following reduction.

\begin{proposition}[Topological reduction]\label{prop:topological-reduction}
The following statements hold.
\begin{enumerate}[label=\textup{(\roman*)}]
\item Let $M^3$ be contractible and suppose that $M$ admits a complete metric of nonnegative scalar curvature. If $M$ satisfies property $\mathrm{(LG)}$, then $M$ is diffeomorphic to $\R^3$.
\item If the open handlebody $M_\gamma$ satisfies property $\mathrm{(LG)}$, then $\gamma\leq1$.
\end{enumerate}
\end{proposition}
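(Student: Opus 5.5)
For (ii) the plan is a duality argument. For (i) the plan is a case split on the genus of the separators, using Perelman's theorem and a known obstruction for Whitehead-type manifolds.

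\textbf{Part (ii).} Let $H\subset M_\gamma$ be a compact handlebody onto which $M_\gamma$ deformation retracts, so that $M_\gamma\setminus \Int H\cong \Sigma_\gamma\times[0,\infty)$. By $\mathrm{(LG)}$ there is an admissible separator $U\supset H$ with $g:=\genus(\partial U)\le1$; put $V=M_\gamma\setminus U$. Since $H\subset U$, the map $H_1(U)\to H_1(M_\gamma)\cong\Q^\gamma$ is surjective.

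\begin{enumerate}[label=\textup{(\arabic*)}]
\item Use locally finite (Borel--Moore) homology. The intersection pairing $H_1(M_\gamma)\times H_2^{lf}(M_\gamma)\to\Q$ is perfect, with $H_2^{lf}(M_\gamma)\cong\Q^\gamma$ generated by the meridian disks.
\item Define $\Phi\colon H_2^{lf}(M_\gamma)\to H_1(\partial U)$ by cutting a transverse representative $\alpha$ along $\partial U$ and setting $\Phi(\alpha)=\partial(\alpha\cap V)=-\partial(\alpha\cap U)$. Hence $\Phi(\alpha)$ lies in $L:=\ker\bigl(H_1(\partial U)\to H_1(U)\bigr)$.
\item By the half-lives-half-dies lemma for the compact orientable $3$-manifold $U$ with connected boundary, $\dim L=g$.
\item $\Phi$ is injective. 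If $\Phi(\alpha)=0$, then $\alpha\cap V$ can be closed up inside $V$, so $\alpha$ is homologous to a locally finite cycle supported in $V$. Such a cycle pairs trivially with every cycle in $U$. Since $H_1(U)$ surjects onto $H_1(M_\gamma)$, perfectness forces $\alpha=0$.
\end{enumerate}
Therefore $\gamma\le\dim L=g\le1$. The point to check carefully is that $\Phi$ is well defined on homology classes; I would handle this with the relative sequence of $(M_\gamma,V)$ and excision.

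\textbf{Part (i).} Since $M$ is contractible, Perelman's resolution of the Poincaré conjecture gives that $M$ is irreducible, and $H_1(M)=H_2(M)=0$. Exhaust $M$ by admissible separators $U_1\Subset U_2\Subset\cdots$ with $\genus(\partial U_i)\le1$, which $\mathrm{(LG)}$ provides.
\begin{itemize}
\item If $\partial U_i$ is a sphere, it bounds a ball by irreducibility. That ball must be $\overline U_i$, because $M\setminus U_i$ is noncompact.
\item If $\partial U_i$ is a torus $T$, then $T$ is compressible, since $\pi_1(M)=1$. Compressing along a disk $D$ gives a sphere, which bounds a ball $B$ by irreducibility.
\begin{itemize}
\item If $D\subset \overline U_i$, then $\overline U_i$ is a solid torus.
\item If $D\subset M\setminus U_i$, then $\overline U_i$ is a knot exterior contained in $B$, and we may enlarge $U_i$ to the ball $B$.
\end{itemize}
\end{itemize}
After passing to a subsequence, $M$ is either an increasing union of balls or an increasing union of solid tori. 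In the first case Brown's monotone union theorem gives $M\cong\R^3$. In the second case $M$ is a contractible genus-one manifold, that is, a Whitehead-type manifold, unless it is $\R^3$.

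\textbf{Main obstacle.} The hard step is excluding Whitehead-type manifolds under $R_g\ge0$. I would first try the following, though I have not checked that each cited result applies exactly in this form.
\begin{itemize}
\item Wang's theorem \cite{WangI,WangII} states that contractible genus-one manifolds other than $\R^3$ carry no complete metric of positive scalar curvature.
\item To pass from $R_g\ge0$ to positive scalar curvature, use the usual rigidity dichotomy. Either $g$ can be deformed to a complete metric with $R>0$, via a Kazdan-type conformal or Ricci-flow deformation on noncompact manifolds; or $g$ is Ricci-flat, hence flat in dimension $3$, and then $M\cong\R^3$ as a contractible complete flat manifold.
\end{itemize}
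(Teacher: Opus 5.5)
Your proposal is correct in outline, but both parts take a different route from the paper.

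For (ii), the paper fixes a core $K_\gamma$ with $M_\gamma\setminus K_\gamma\cong\Sigma_\gamma\times(0,\infty)$. It uses the cobordism $W_T$ to show $\partial U$ is homologous to a level $\Sigma_\gamma\times\{T\}$, so projection gives a degree $\pm1$ map $\partial U\to\Sigma_\gamma$. Injectivity of $f^*$ on $H^1$, via cup products, then gives $\gamma\le\genus(\partial U)$. You instead inject $H_2^{lf}(M_\gamma;\Q)\cong\Q^\gamma$ into the Lagrangian $\ker\bigl(H_1(\partial U)\to H_1(\overline U)\bigr)$ and count with half-lives-half-dies. Your argument never uses the product structure of the end; it needs only that $U$ carries $H_1(M_\gamma)$ and that $H_2(M_\gamma;\Q)=0$. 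You should state that last fact explicitly. In step (4), closing up with a chain $c\subset\partial U$ actually gives $\alpha=(\alpha\cap V-c)+(\alpha\cap\overline U+c)$. You may discard the compact cycle $\alpha\cap\overline U+c$ only because it is null-homologous in $M_\gamma$.

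For (i), the paper (Lemma~\ref{lem:contractible-core}) cites Husch--Price for sphere exhaustions and Wang's torus-exhaustion rigidity, already in the nonnegative-scalar-curvature form \cite[Lemma 2.4]{YanZhu}. It then passes to a diffeomorphism via Moise. You do the 3-manifold topology by hand: Perelman gives irreducibility, you compress the tori into solid tori or balls, and you apply Brown's monotone union theorem. You then reach Wang's positive-scalar-curvature genus-one theorem through the dichotomy ``Kazdan deformation or flat.'' That dichotomy is exactly what the paper proves later, in Proposition~\ref{prop:kazdan-deformation} and the flat branch of Section~\ref{sec:main-theorems}, so your route is sound. Two adjustments are needed. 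Commit to Kazdan's conformal deformation, which is uniformly equivalent and hence complete, rather than Ricci flow, whose short-time existence would need bounded curvature. Also add Moise's theorem to upgrade the homeomorphisms from Brown and Wang to a diffeomorphism.

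The paper's version is shorter and keeps the topological reduction independent of the deformation machinery. Yours is more self-contained topologically and makes explicit why only the genus-one obstruction is needed.
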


Combining Theorem~\ref{thm:low-genus} with Proposition~\ref{prop:topological-reduction} gives the two nonparabolic consequences below.

\begin{corollary}[Nonparabolic contractible manifolds]\label{cor:nonparabolic-contractible}
Let $(M^3,g)$ be a contractible complete nonparabolic Riemannian $3$-manifold with $R_g\geq0$. Then $M$ is diffeomorphic to $\R^3$.
\end{corollary}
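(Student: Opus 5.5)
The corollary should follow by checking that a contractible $M$ meets the hypotheses of Theorem~\ref{thm:low-genus}, then applying Proposition~\ref{prop:topological-reduction}(i).

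First I verify the hypotheses of Theorem~\ref{thm:low-genus}. Completeness, nonparabolicity and $R_g\geq0$ are assumed. Orientability holds because a contractible manifold is simply connected, so it has no connected orientation double cover. Also $H_1(M;\Q)=0$, hence $b_1(M)=0<\infty$.

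The remaining point is that $M$ is one-ended. I use the standard homological argument. If $K\Subset M$ is compact and $M\setminus K$ had two unbounded components, take a compact domain $V\supset K$ with smooth boundary. Then at least two components of $M\setminus V$ are noncompact. Consider the Mayer--Vietoris sequence for the cover $M=\Int V'\cup(M\setminus V)$, where $V'$ is a slight thickening of $V$. It gives an exact piece
\[
H^0(M)\to H^0(V)\oplus H^0(M\setminus V)\to H^0(\partial V)\to H^1(M)=0.
\]
A cleaner route is compactly supported cohomology. For an open $n$-manifold, the number of ends is $1+\dim H^1_c(M;\Q)$ when that quantity is finite. By Poincar\'e duality, $H^1_c(M;\Q)\cong H_{2}(M;\Q)=0$, because $M$ is contractible and orientable. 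Concretely, two noncompact components $E_1,E_2$ of $M\setminus V$ produce a proper function that equals $1$ on $E_1$ and $0$ elsewhere outside $V$. Its differential is a closed, compactly supported $1$-form. That form is not $d$ of any compactly supported function, since such a function would have to be constant near infinity and so could not take two different values there. This gives a nonzero class in $H^1_c$, a contradiction. Since $M$ is noncompact, it has at least one end, so it has exactly one.

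Theorem~\ref{thm:low-genus} then gives property $\mathrm{(LG)}$, and Proposition~\ref{prop:topological-reduction}(i) yields $M\cong\R^3$, since $M$ is contractible and carries a complete metric with $R_g\geq0$. I expect no real obstacle. The only step needing care is the one-endedness argument, and Poincar\'e duality for compactly supported cohomology handles it.
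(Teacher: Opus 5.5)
Your proposal is correct and follows the paper's proof: the paper likewise notes that a contractible $M$ is orientable, has $b_1=0$, and is one-ended (it cites Yan--Zhu where you argue directly), and then applies Theorem~\ref{thm:low-genus} followed by Proposition~\ref{prop:topological-reduction}(i). One small caveat in your one-endedness step is that in general the number of ends is only at most $1+\dim H^1_c(M;\Q)$ (the punctured torus shows equality can fail), but your explicit compactly supported closed $1$-form together with $H^1_c(M)\cong H_2(M)=0$ is all you actually need.
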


\begin{corollary}[Nonparabolic open handlebodies]\label{cor:nonparabolic-handlebody}
Let $M_\gamma$ be the interior of a compact handlebody of genus $\gamma$. If $M_\gamma$ admits a complete nonparabolic metric $g$ with $R_g\geq0$, then
$
 \gamma\leq1.
$
\end{corollary}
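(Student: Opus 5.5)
The statement to prove is Corollary~\ref{cor:nonparabolic-handlebody}. The plan is to apply Theorem~\ref{thm:low-genus} to $(M_\gamma,g)$ and then Proposition~\ref{prop:topological-reduction}\textup{(ii)}. For this we only have to check the topological hypotheses of Theorem~\ref{thm:low-genus}, since completeness, nonparabolicity and $R_g\geq0$ are assumed.

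\emph{Orientability.} A compact handlebody of genus $\gamma$ is a $3$-ball with $\gamma$ orientable $1$-handles attached, that is, a boundary connected sum of $\gamma$ copies of $S^1\times D^2$. This is the standard orientable notion used in Gromov's question, and we take the handlebody in the statement to be of this kind. Hence $M_\gamma$ is orientable.

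\emph{Finite first Betti number.} $M_\gamma$ deformation retracts onto a wedge of $\gamma$ circles. Therefore $H_1(M_\gamma;\Q)\cong\Q^\gamma$ and $b_1(M_\gamma)=\gamma<\infty$.

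\emph{One end.} Let $H$ be the compact handlebody and $\Sigma=\partial H$, a closed surface of genus $\gamma$. A collar of $\Sigma$ in $H$ gives a diffeomorphism between $M_\gamma$ minus a compact set and $\Sigma\times(0,1)$. Any compact $K\subset M_\gamma$ lies in some $H_t=H\setminus(\Sigma\times[0,t))$ with $t>0$ small. Then $M_\gamma\setminus H_t\cong\Sigma\times(0,t)$, which is connected. So every compact set has exactly one unbounded complementary component containing the end, and $M_\gamma$ is one-ended. (If $\gamma=0$, then $M_\gamma\cong\R^3$ and the conclusion $\gamma\le 1$ already holds, so this case could also be treated separately.)

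\emph{Conclusion.} Theorem~\ref{thm:low-genus} now shows that $M_\gamma$ has property $\mathrm{(LG)}$. Proposition~\ref{prop:topological-reduction}\textup{(ii)} then gives $\gamma\leq1$. No step here is a genuine obstacle: the analytic content is entirely in Theorem~\ref{thm:low-genus}, and the topological content is entirely in Proposition~\ref{prop:topological-reduction}. The only point requiring care is the end count, which the collar argument above settles.
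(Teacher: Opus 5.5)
Your proposal is correct and follows the paper's proof essentially verbatim. You check that $M_\gamma$ is orientable, one-ended, and has $b_1(M_\gamma)=\gamma<\infty$. You then apply Theorem~\ref{thm:low-genus} to obtain property $\mathrm{(LG)}$ and conclude $\gamma\leq1$ from Proposition~\ref{prop:topological-reduction}\textup{(ii)}. Your collar argument simply supplies the one-endedness detail that the paper asserts without proof.
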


To pass from the parabolic to the nonparabolic setting, we use a conformal
deformation driven by a boundary-normalized exterior Evans potential, namely,
a positive harmonic function on an exterior domain that vanishes on its
boundary and tends to $+\infty$ at infinity.

\begin{proposition}[Conformal nonparabolicization]
\label{prop:nonparabolicization}
Let $M^3$ be one-ended. If $M$ admits a complete Riemannian metric $g$ with
$
 R_g>0,
$
then $M$ admits a complete nonparabolic Riemannian metric $\widehat g$ with
$
 R_{\widehat g}>0.
$
\end{proposition}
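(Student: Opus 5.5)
We split into the nonparabolic and parabolic cases. If $(M,g)$ is already nonparabolic, we take $\widehat g=g$. So assume $(M,g)$ is parabolic. Fix a smooth compact domain $K\Subset M$. Since $M$ is one-ended and parabolic, we use the classical Nakai--Sario construction (extended to Riemannian manifolds) of an exterior Evans potential. This is a harmonic function $u\in C^\infty(\overline{M\setminus K})$ with $u>0$ on $M\setminus K$, $u=0$ on $\partial K$, and $u\to+\infty$ at infinity. Because $u$ is proper on the single end, its superlevel sets $\{u>t\}$ exhaust the end. By harmonicity, the flux $\int_{\{u=t\}}|\nabla u|$ is a constant $c_0>0$, independent of the regular value $t$.

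We then look for $\widehat g=\varphi^4 g$ with $\varphi=f(u)$ on $M\setminus K$ and $\varphi\equiv1$ on $K$. In dimension three,
$$R_{\widehat g}=\varphi^{-5}\bigl(-8\Delta_g\varphi+R_g\varphi\bigr),\qquad \Delta_g f(u)=f''(u)|\nabla u|^2 .$$
Hence $R_{\widehat g}>0$ wherever $f''\le 0$, and $R_g>0$ covers any region where $f''=0$. We choose $f$ with the following properties:
\begin{itemize}
\item[(a)] $f\equiv1$ on $[0,1]$;
\item[(b)] $f'\ge0$ everywhere;
\item[(c)] $f(t)=(1+t)^{3/4}$, up to normalization, for $t\ge2$, so that $f''\le0$ there;
\item[(d)] on the transition interval $[1,2]$, we have $|f''|\le\eps$.
\end{itemize}
On the compact set $\{1\le u\le2\}$ we have $R_g\ge\delta>0$ and $|\nabla u|\le C$. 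Choosing $\eps<\delta/(8C^2)$ therefore gives $R_{\widehat g}>0$ there, and positivity holds everywhere else by the sign of $f''$. Concretely, we take a small convex bump in $f'$ followed by a concave tail. Completeness of $\widehat g$ is immediate, since $\varphi\ge1$ forces $\widehat g\ge g$.

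For nonparabolicity we use that the Dirichlet energy transforms in dimension three as
$$\int|\nabla\psi|_{\widehat g}^2\,dV_{\widehat g}=\int\varphi^2|\nabla\psi|_g^2\,dV_g .$$
Thus $\psi$ is $\widehat g$-harmonic if and only if $\operatorname{div}_g(\varphi^2\nabla\psi)=0$. Set
$$\psi=h(u),\qquad h(t)=\frac{\int_t^\infty f(s)^{-2}\,ds}{\int_0^\infty f(s)^{-2}\,ds},$$
which is finite because $f^{-2}\sim t^{-3/2}$. Then
$$\operatorname{div}(f(u)^2h'(u)\nabla u)=(f^2h')'(u)\,|\nabla u|^2=0,$$
so $\psi$ is a $\widehat g$-harmonic function on $M\setminus K$ with $\psi=1$ on $\partial K$, $0<\psi<1$, and $\psi\to0$ at infinity. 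Equivalently, by the coarea formula and the constant flux $c_0$, the $\widehat g$-capacity of $K$ is at least $c_0\bigl(\int_0^\infty f^{-2}\bigr)^{-1}>0$. This shows $(M,\widehat g)$ is nonparabolic: a bounded nonconstant harmonic barrier on the end, equivalently positive capacity of a compact set, is equivalent to the existence of a minimal positive Green function.

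The main obstacle is the first step: producing the boundary-normalized Evans potential with the stated properness and regularity on an arbitrary complete parabolic one-ended manifold, with no curvature bounds. I would obtain it from the Nakai/Sario--Nakai construction, via limits of Green-type functions on an exhaustion with logarithmic capacity normalization. One-endedness is what guarantees a single potential tending to $+\infty$ along the whole end. A secondary technical point is arranging that $\{1\le u\le 2\}$ is compact and that critical values of $u$ cause no trouble. The formulas above are pointwise, so critical points are harmless, and properness of $u$ gives the compactness.
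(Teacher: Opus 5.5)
Your proposal is correct and follows essentially the paper's argument. The paper likewise takes a boundary-normalized exterior Evans potential $u$ (via Hansen--Netuka rather than Nakai) and sets $\widehat g=F(u)^4g$ with $F\ge1$. It controls the only positive $F''$ on a compact band $\{a\le u\le b\}$ by $\min R_g$, and gets nonparabolicity from constant flux plus $\int^\infty F^{-2}<\infty$. The only cosmetic difference is that its tail is affine with slope $\varepsilon$, so $F''=0$ there, instead of your concave $(1+t)^{3/4}$ tail. One small point: $f'(1)=0$ and $|f''|\le\varepsilon$ on $[1,2]$ force $f'(2)\le\varepsilon$, so your ``normalization'' of the tail must include an additive constant with a small coefficient, e.g.\ $f(t)=\beta+\alpha(1+t)^{3/4}$ with $\alpha=O(\varepsilon)$; this is harmless.
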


Together with a Kazdan-type deformation to positive scalar curvature
\cite[Theorem A]{Kazdan}, Proposition~\ref{prop:nonparabolicization}
yields the following metric-replacement theorem.

\begin{theorem}[Metric replacement]\label{thm:metric-replacement}
Let $(M^3,g)$ be complete and one-ended. Assume that
$$
 R_g\geq0,
 \qquad
 \Ric_g\not\equiv0.
$$
Then $M$ admits a complete nonparabolic metric $\widehat g$ satisfying
$
 R_{\widehat g}>0.
$
\end{theorem}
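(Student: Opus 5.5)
The proof combines Kazdan's deformation theorem \cite[Theorem A]{Kazdan} with Proposition~\ref{prop:nonparabolicization}.

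\emph{Step 1: strict positivity.} The hypotheses are $R_g\geq0$ and $\Ric_g\not\equiv0$. These are exactly the input of Kazdan's result for complete metrics. That result deforms $g$ to a complete metric $g_1$ with $R_{g_1}>0$ everywhere. The deformation is a Ricci-flow-type or conformal perturbation supported near a point where $\Ric_g\neq0$, followed by a conformal correction that spreads positivity.

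Since $g_1$ may differ from $g$ by more than a bounded factor, I will verify two things:
\begin{itemize}
\item completeness of $g_1$, which holds if the deformation is compactly supported or uniformly equivalent to $g$;
\item that the underlying manifold is unchanged, so $M$ is still one-ended.
\end{itemize}
If Theorem A is stated only for compact manifolds or under bounded geometry, I will fall back to a local version. Take a short-time Ricci flow on a ball where $\Ric\neq0$, cut off, to make $R>0$ somewhere while keeping $R\geq0$ everywhere. Then solve $-8\Delta u+R u=\lambda u$-type equations on exhausting domains to obtain a positive conformal factor $u$ with $u$ bounded between two positive constants, so that $u^4g$ has $R>0$.

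\emph{Step 2: nonparabolicization.} Apply Proposition~\ref{prop:nonparabolicization} to the one-ended manifold $M$ with the complete metric $g_1$ satisfying $R_{g_1}>0$. It yields a complete nonparabolic metric $\widehat g$ with $R_{\widehat g}>0$, which is the conclusion.

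\emph{Main obstacle.} The main obstacle is Step 1 in the noncompact setting. Kazdan's argument guarantees $R>0$ strictly everywhere, not merely $R\ge0$ with $R>0$ somewhere. With no bounded geometry, one must ensure the correction neither destroys completeness nor makes positivity decay to zero on compact sets. Only pointwise positivity is needed, not a uniform lower bound, so this should be manageable: a conformal factor that is bounded above and below keeps the metric complete, and the strong maximum principle applied to the relevant elliptic equation gives strict positivity everywhere.
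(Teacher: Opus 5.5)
Your proposal is correct and matches the paper's proof. The paper likewise obtains a complete metric $g_+$ with $R_{g_+}>0$ from Kazdan's deformation (Proposition~\ref{prop:kazdan-deformation}; there, Kazdan's supersolution is bounded between two positive constants, so $g_+$ is uniformly equivalent to $g$ and hence complete), and then applies Proposition~\ref{prop:nonparabolicization} to the one-ended manifold $M$.

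Your fallback is unnecessary, since Kazdan's theorem needs no bounded geometry, and as sketched it would fail. In the scalar-flat case a cut-off perturbation $g_t=g-t\eta\Ric_g$ has $R_{g_t}=t\bigl(\eta|\Ric_g|^2-\langle\Ric_g,\nabla^2\eta\rangle\bigr)+O(t^2)$, which is typically negative near the edge of $\spt\eta$, so $R\geq0$ cannot be kept pointwise. This is why Kazdan (and the paper) require only positivity of the first Neumann eigenvalue of $-8\Delta+R$ on a domain containing $\spt R^-$, where the divergence term integrates away.
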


We now explain the idea of the proof of Theorem \ref{thm:low-genus}. Suppose, toward a contradiction, that property $\mathrm{(LG)}$ fails. Then some compact set $K_0$ is contained only in admissible separators of genus at least two. We construct a smooth proper Morse function $\rho$ with globally bounded gradient and form the nested filled sublevel domains $D_\tau$. On an interval containing no critical value, the boundary moves outward with normal speed
$$
 V_\tau=\frac{1}{|\nabla\rho|}\geq L^{-1}.
$$
For the Dirichlet Green function $G_\tau$ of $D_\tau$, let $m_\tau$ be its Robin constant and put
$$
 q_\tau:=-\partial_{\nu_\tau}G_\tau>0
 \qquad\text{on }\partial D_\tau.
$$
Hadamard's variational formula gives
\begin{equation}\label{eq:intro-Hadamard}
 \frac{d}{d\tau}m_\tau
 =\frac{1}{4\pi}\int_{\partial D_\tau}
 \frac{q_\tau^2}{|\nabla\rho|}\dd A_g
 \geq\frac{1}{4\pi L}\int_{\partial D_\tau}q_\tau^2\dd A_g.
\end{equation}
Thus the outer-domain argument reduces the theorem to establishing a
positive lower bound, uniform in $\tau$, for the squared $L^2$-norm of
the boundary normal derivative of $G_\tau$.

For each fixed outer domain, set $b_\tau=G_\tau^{-1}$ and consider the inner level-set quantity
$$
 A_\tau(r):=r^{-2}\int_{\{b_\tau=r\}}|\nabla b_\tau|^2\dd A_g.
$$
A fixed initial Dirichlet domain and domain monotonicity provide a threshold $r_*$, independent of $\tau$, beyond which the Green superlevels contain $K_0$ and the compact detector used to force boundary connectedness. Hence every sufficiently large regular level is an admissible separator of genus at least two. The resulting Gauss--Bonnet gap, together with an exact Colding--Minicozzi identity across critical Green levels and an integral Riccati comparison, gives
\begin{equation*}
 A_\tau(r)\geq c_*r^2
 \qquad\text{for all sufficiently large }r,
\end{equation*}
where $c_*>0$ is independent of the outer parameter. Boundary regularity then yields
$$
 \lim_{r\to\infty}\frac{A_\tau(r)}{r^2}
 =\int_{\partial D_\tau}q_\tau^2\dd A_g,
$$
and therefore the uniform lower bound required in \eqref{eq:intro-Hadamard}.

The proof involves two different critical phenomena. Critical values of the outer Morse parameter $\tau$ may change the filled domain by a genuine topological attachment; domain monotonicity records such changes as nonnegative jumps of the Robin constant. Critical values of the inner Green parameter $r$ may produce singular level sets, but local semiconvexity, the codimension-two estimate for the critical set, and a $BV$ argument show that the level-set identity acquires no defect measure. Integrating \eqref{eq:intro-Hadamard}, including the nonnegative outer jumps, forces $m_\tau$ to diverge. This contradicts the uniform upper bound supplied by the global minimal Green function. The latter is used only as a Robin-constant barrier; no properness of its positive levels is asserted or needed.

Theorem \ref{thm:metric-replacement} has two components. If $R_g\geq0$ and $\Ric_g\not\equiv0$, Kazdan's deformation theorem produces a complete metric $g_+$, uniformly equivalent to $g$, with $R_{g_+}>0$. If $g_+$ is parabolic, a proper exterior Evans potential is used to construct a conformal factor that preserves strict scalar-curvature positivity, preserves completeness, and makes a compact set have positive $\widehat g$-capacity. The resulting metric is nonparabolic. If instead $\Ric_g\equiv0$, then the metric is flat in dimension three; the contractible case is Euclidean, while in the handlebody case a deck-transformation packing argument excludes the free group $F_\gamma$ for $\gamma\geq2$.

The paper is organized as follows. Section~\ref{sec:topological-reduction} isolates the topological reduction to property $\mathrm{(LG)}$. Section~\ref{sec:morse-exhaustion} constructs the bounded-gradient filled Morse exhaustion. Section~\ref{sec:domain-variation} develops domain monotonicity and the Hadamard formula for Dirichlet Green functions and Robin constants. Section~\ref{sec:green-levels} treats Dirichlet Green levels. Section~\ref{sec:low-genus-proof} proves Theorem~\ref{thm:low-genus} and its nonparabolic corollaries. Section~\ref{sec:metric-replacement} proves Theorem \ref{thm:metric-replacement}. Section~\ref{sec:main-theorems} treats the flat branch and completes the proofs of Theorems~\ref{thm:contractible-main} and~\ref{thm:handlebody-main}.

\section{Topological reduction to low-genus separators}\label{sec:topological-reduction}

This section isolates the topological result from the analytic argument. The first two results control the number and connectedness of boundary components; they will also be used for Dirichlet Green superlevels in Section~\ref{sec:green-levels}. The final two lemmas identify the compact topological obstructions that property $\mathrm{(LG)}$ must exclude.

\subsection{A Mayer--Vietoris component estimate}

The following elementary estimate is needed because small Dirichlet Green levels need not be connected.

\begin{lemma}[Boundary-component estimate]\label{lem:boundary-components}
Let $M$ be a connected manifold and let $U\Subset M$ be a connected smooth domain such that $M\setminus U$ is connected. If
$$
 \partial U=\Sigma_1\sqcup\cdots\sqcup\Sigma_N,
$$
then
\begin{equation}\label{eq:component-bound}
 N-1\leq b_1(M).
\end{equation}
\end{lemma}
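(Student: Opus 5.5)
We apply the Mayer--Vietoris sequence to the decomposition of $M$ into a slight thickening of $\overline U$ and a slight thickening of $M\setminus U$. Since $\partial U$ is smooth and compact, it has a collar. Let $A$ be $U$ together with an open collar of $\partial U$ on the outside, and let $B$ be $M\setminus\overline U$ together with an open collar on the inside. Then $A$ deformation retracts onto $\overline U$, and $B$ deformation retracts onto $M\setminus U$. The intersection $A\cap B$ is a product neighborhood $\partial U\times(-1,1)$, which deformation retracts onto $\partial U$. Hence $A$ and $B$ are connected, $A\cap B$ has exactly $N$ components, and $A\cup B=M$.

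We use the tail of the Mayer--Vietoris sequence with $\Q$ coefficients:
$$
H_1(M)\xrightarrow{\ \partial\ } H_0(A\cap B)\longrightarrow H_0(A)\oplus H_0(B)\longrightarrow H_0(M)\longrightarrow 0 .
$$
Here $\dim H_0(A\cap B)=N$, $\dim H_0(A)\oplus H_0(B)=2$, and $\dim H_0(M)=1$. By exactness, the map $H_0(A\cap B)\to H_0(A)\oplus H_0(B)$ has image of dimension $2-1=1$. Its kernel therefore has dimension $N-1$. This kernel equals the image of $\partial$, so $N-1\le\dim H_1(M;\Q)=b_1(M)$. If $b_1(M)=\infty$, the inequality is trivial.

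Geometrically, the loops realizing these classes are obtained by joining $\Sigma_1$ to $\Sigma_i$ through $U$ and returning through $M\setminus U$. Each such loop has intersection number $1$ with $\Sigma_i$, which could give an alternative argument via intersection pairing. The homological argument above avoids orientability issues, so we do not need it.

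The only point requiring care is justifying the thickenings, namely that $\overline U$ and $M\setminus U$ are connected and that the collar gives the stated homotopy equivalences. This uses the smoothness and compactness of $\partial U$ together with the connectedness hypotheses on $U$ and $M\setminus U$. Connectedness of $\overline U$ follows from that of $U$. We expect this step to be the main (mild) obstacle; the remainder is linear algebra.
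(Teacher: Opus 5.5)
Your proposal is correct and follows essentially the paper's argument: the same collar thickenings $A\supset U$ and $B\supset M\setminus U$ with $A\cap B\simeq\partial U$, followed by Mayer--Vietoris in degrees one and zero. The only difference is that the paper uses the reduced sequence, so $\widetilde H_0(A)\oplus\widetilde H_0(B)=0$ immediately makes $H_1(M;\Q)\to\widetilde H_0(A\cap B;\Q)\cong\Q^{N-1}$ surjective, whereas you reach the same conclusion by the equivalent dimension count in the unreduced sequence.
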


\begin{proof}
Choose a two-sided open collar $\mathcal N\cong\partial U\times(-\eps,\eps)$. Let $A$ be a connected open thickening of $U$ obtained by adjoining the collar, and let $B$ be a connected open thickening of $M\setminus U$ using the same collar. Then
$$
 M=A\cup B,
 \qquad
 A\cap B\simeq\partial U.
$$
The connectedness assumptions give
$$
 \widetilde H_0(A;\Q)=0,
 \qquad
 \widetilde H_0(B;\Q)=0,
$$
whereas
$$
 \widetilde H_0(A\cap B;\Q)
 \cong \widetilde H_0(\partial U;\Q)
 \cong \Q^{N-1}.
$$
The reduced Mayer--Vietoris sequence contains the exact segment
$$
 H_1(M;\Q)
 \longrightarrow
 \widetilde H_0(A\cap B;\Q)
 \longrightarrow
 \widetilde H_0(A;\Q)\oplus\widetilde H_0(B;\Q)=0.
$$
Thus the first arrow is surjective, and \eqref{eq:component-bound} follows by taking dimensions.
\end{proof}

\subsection{A compact set detecting disconnected boundaries}

We also use the following topological observation, stated in essentially this form as \cite[Lemma 2.6]{CLX}.

\begin{lemma}[Compact detector for boundary components]\label{lem:compact-detector}
Let $M$ be an orientable, one-ended manifold with $b_1(M)<\infty$. There exists a compact set $S\Subset M$ with the following property. If $U\Subset M$ is a connected $C^1$ domain and $M\setminus U$ has no precompact connected component, then either $\partial U$ is connected or every connected component of $\partial U$ intersects $S$.
\end{lemma}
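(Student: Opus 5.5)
We build $S$ from a finite set of cycles that generate rational homology, and then use intersection numbers to force every boundary component to meet $S$.

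\emph{Construction of $S$.} Since $b_1(M)<\infty$, choose smooth closed loops $c_1,\dots,c_k$ whose classes span $H_1(M;\Q)$. Using one-endedness, choose a compact connected domain $W$ containing all the $c_i$ such that $M\setminus W$ has exactly one unbounded component and no precompact components; precompact complementary pieces are simply absorbed into $W$. Set $S:=W$, or alternatively $S:=\bigcup_i c_i$ together with a compact set $K_\infty$ described below.

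\emph{Intersection argument.} Let $U$ be as in the statement, and suppose $\partial U$ is disconnected. Let $\Sigma$ be a component of $\partial U$. Each component of $M\setminus U$ is closed and noncompact, and one-endedness forces all of them except possibly one to be bounded. Since precompact components are excluded, $M\setminus U$ has a single component $E$, which is unbounded and contains every boundary component. Orientability makes $\Sigma$ two-sided. Because both $U$ and $E$ are connected and $\partial U$ has another component $\Sigma'$, the surface $\Sigma$ does not separate $M$: we can pass from one side of $\Sigma$ to the other through $U$ and $E$. Hence there is a closed loop $\alpha$ crossing $\Sigma$ exactly once transversally. This gives algebraic intersection number $\alpha\cdot\Sigma=1$. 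The closed surface $\Sigma$ defines a class in $H_2^{\mathrm{lf}}$, or in $H^1$ via Poincaré duality, and pairing with $\alpha$ is a homology invariant. Writing $[\alpha]=\sum a_i[c_i]$, some $c_i$ must satisfy $c_i\cdot\Sigma\neq0$, so $\Sigma$ meets $c_i\subset S$. This argument needs no compactness beyond the loops. Disconnectedness was used exactly once, to produce $\Sigma'$ and hence the loop $\alpha$.

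\emph{Main obstacle.} The hardest step is making the intersection pairing rigorous. I would work with the cohomology class $\eta_\Sigma\in H^1(M;\Z)$ Poincaré dual to the compact two-sided surface $\Sigma$. Concretely, $\eta_\Sigma$ is defined by a function to $S^1$ that jumps across a collar of $\Sigma$. Its evaluation on a loop counts signed crossings and depends only on the homology class of the loop. The statement assumes only $C^1$ domains, so I would use transversality after a small perturbation of the $c_i$, keeping them inside a slightly enlarged compact $S$, namely a closed tubular neighbourhood of $\bigcup c_i$. Intersecting a loop in that neighbourhood then implies meeting $S$. Finally, the explicit $\alpha$ is built by joining a point just inside $\Sigma$ to a point on $\Sigma'$ through $U$, and then returning through $E$ to just outside $\Sigma$. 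Connectedness of $U$ and of $E$ makes this possible, and the resulting loop is arranged to cross $\Sigma$ once.
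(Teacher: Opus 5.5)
Your proposal is correct and follows essentially the same route as the paper. Both arguments run in the same order: put loops spanning $H_1(M;\Q)$ inside $S$, use one-endedness to show $M\setminus U$ is connected, build a loop through $U$ and the exterior that crosses a boundary component $\Sigma$ exactly once, and use homology invariance of the intersection functional with $\Sigma$ to force some basis loop to meet $\Sigma$. (The transversality perturbation and the never-specified set $K_\infty$ are unnecessary, since a loop disjoint from $\Sigma$ already pairs to zero with the collar class, so $S=\bigcup_i c_i$ suffices.)
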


\begin{proof}
Choose smooth closed loops $\gamma_1,\ldots,\gamma_\beta$ whose homology classes form a basis of $H_1(M;\Q)$, where $\beta=b_1(M)$, and choose a compact set $S$ containing their images.

First, $M\setminus U$ is connected. Indeed, if it had two distinct nonprecompact components, each would contain a proper ray. Since $\overline U$ is compact, the two rays would determine distinct ends of $M$, contradicting one-endedness.

Suppose that $\partial U$ is disconnected, and let $\Sigma$ be one of its connected components. Since $M$ is orientable, the compact two-sided surface $\Sigma$ is cooriented; its compactly supported Poincar\'e dual makes algebraic intersection with $\Sigma$ a homology-invariant functional. Since both $U$ and $M\setminus U$ are connected and there is at least one other boundary component, a small oriented normal arc through $\Sigma$ can be completed to a closed loop avoiding $\Sigma$ except at that single transverse crossing: connect its endpoint in $U$ to another boundary component inside $U$, cross that component, and return to the exterior endpoint inside $M\setminus U$. This loop has intersection number $\pm1$, so the functional
$$
 I_\Sigma:H_1(M;\Q)\longrightarrow\Q
$$
is nonzero. If $\Sigma\cap S=\varnothing$, then every $\gamma_i$ is disjoint from $\Sigma$, so $I_\Sigma([\gamma_i])=0$ for all $i$. Since the classes $[\gamma_i]$ span $H_1(M;\Q)$, this would imply $I_\Sigma=0$, a contradiction. Hence every component of a disconnected $\partial U$ intersects $S$.
\end{proof}

\subsection{Open handlebodies}

\begin{lemma}[A genus core in an open handlebody]\label{lem:handlebody-core}
Let $M_\gamma$ be the interior of a compact orientable handlebody of genus $\gamma$. There exists a compact set $K_\gamma\Subset M_\gamma$ such that every admissible separator $U$ with $K_\gamma\subset U$ satisfies
\begin{equation}\label{eq:handlebody-genus-lower}
 \genus(\partial U)\geq\gamma.
\end{equation}
\end{lemma}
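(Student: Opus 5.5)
We take $K_\gamma$ to be a wedge of $\gamma$ embedded circles, or a compact core handlebody $H'\subset M_\gamma$, whose loops $a_1,\dots,a_\gamma$ give a basis of $H_1(M_\gamma;\Q)\cong\Q^\gamma$. Concretely, let $H$ be the compact handlebody with $M_\gamma=\Int H$, and let $K_\gamma=H'$ be a slightly shrunk copy of $H$ inside the interior. The inclusion $H'\hookrightarrow M_\gamma$ is a homotopy equivalence. We will show that any admissible separator $U\supset K_\gamma$ satisfies $\genus(\partial U)\ge\gamma$.

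The argument runs through the "half lives, half dies" principle. Let $U$ be admissible with $K_\gamma\subset U$. Since $\partial U$ is connected, it is a closed orientable surface $\Sigma$ of genus $g$, and $\overline U$ is a compact orientable $3$-manifold with boundary $\Sigma$. By the half-lives-half-dies lemma, the kernel of $H_1(\Sigma;\Q)\to H_1(\overline U;\Q)$ has dimension $g$, so
$$\dim H_1(\overline U;\Q)\ \ge\ \dim \operatorname{im}\bigl(H_1(\Sigma)\to H_1(\overline U)\bigr)=g .$$
This inequality alone is not enough; we need to bound $g$ from below, not above. The useful direction comes from comparing $\overline U$ with $M_\gamma$.

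Consider Mayer--Vietoris for $M_\gamma=\overline U\cup \overline{M_\gamma\setminus U}$, intersecting in $\Sigma$ (after thickening by collars, as in Lemma~\ref{lem:boundary-components}). Let $W=\overline{M_\gamma\setminus U}$. Since $K_\gamma\subset U$ and $H_1(K_\gamma)\to H_1(M_\gamma)$ is an isomorphism, the map $i_*:H_1(\overline U)\to H_1(M_\gamma)$ is surjective. Moreover, $H_2(M_\gamma;\Q)=0$ because a handlebody is homotopy equivalent to a wedge of circles. Mayer--Vietoris then gives the exact sequence
$$0=H_2(M_\gamma)\to H_1(\Sigma)\to H_1(\overline U)\oplus H_1(W)\to H_1(M_\gamma)\to \widetilde H_0(\Sigma)=0 .$$
Taking dimensions,
$$2g=\dim H_1(\overline U)+\dim H_1(W)-\gamma .$$
The goal is therefore to show $\dim H_1(\overline U)+\dim H_1(W)\ge 3\gamma$, or to find another route.

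A cleaner route avoids this count. The restriction $H_1(\overline U)\to H_1(M_\gamma)$ is surjective, and we can use $W$. Since $K_\gamma$ is a deformation retract of $M_\gamma$, the complement $M_\gamma\setminus \Int K_\gamma\cong \partial H'\times[0,1)$ is a product collar. Hence $W$ is a noncompact region in this collar, with $W\supset\partial H\times[1-\epsilon,1)$. The composition $H_1(\partial H)\to H_1(W)\to H_1(\text{collar})\cong H_1(\partial H)\cong\Q^{2\gamma}$ is an isomorphism, so $\dim H_1(W)\ge 2\gamma$.

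Next we bound $H_1(W)$ above by the genus of its boundary. $W$ is compact once the end is truncated: $W'=W\cap(\text{collar up to level }1-\epsilon)$ has boundary $\Sigma\sqcup \partial H\times\{1-\epsilon\}$ and deformation-retracts suitably onto $W$. Applying half-lives-half-dies to $W'$ gives $\dim\ker\bigl(H_1(\partial W')\to H_1(W')\bigr)=g+\gamma$. Since $H_1(W)\cong H_1(W')$ and the image of $H_1(\partial W')$ has dimension $g+\gamma$, we need that image to contain the $2\gamma$-dimensional piece. Then $g+\gamma\ge 2\gamma$, i.e. $g\ge\gamma$.

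To verify this, observe that the image contains the image of $H_1(\partial H\times\{1-\epsilon\})$, which injects (shown above) with dimension $2\gamma$. So $2\gamma\le g+\gamma$, giving $g\ge\gamma$, which is \eqref{eq:handlebody-genus-lower}.

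The main obstacle is making the collar structure rigorous: we need $M_\gamma\setminus\Int K_\gamma$ to be a product $\partial H'\times[0,1)$ in which the compact $\overline U$ sits below some level. This follows by choosing $H'=H\setminus(\partial H\times[0,\delta))$ from a collar of $\partial H$ in $H$. Half-lives-half-dies over $\Q$ is standard, via Poincaré--Lefschetz duality.
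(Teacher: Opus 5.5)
Your proof is correct, but it reaches the bound by a different mechanism than the paper. Both arguments use the same geometric input. The first is a shrunk core $K_\gamma$ whose complement is a product collar. The second is the compact cobordism between $\partial U$ and a far level surface: your $W'$, the paper's $W_T$.

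The paper uses $W_T$ only to conclude that $[\partial U]=[\Sigma_\gamma\times\{T\}]$ in $H_2$ of the collar. Hence the collar projection $f:\partial U\to\Sigma_\gamma$ has degree $\pm1$. Injectivity of $f^*$ on $H^1(\,\cdot\,;\Q)$ then follows from the cup-product pairing and Poincar\'e duality on $\Sigma_\gamma$, which gives $2\gamma\le 2\,\genus(\partial U)$.

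You instead apply half-lives-half-dies to $W'$. The image of $H_1(\partial W';\Q)\to H_1(W';\Q)$ has dimension $\tfrac12(2g+2\gamma)=g+\gamma$. This image contains the image of the outer level surface, which is $2\gamma$-dimensional because its composite into the collar is an isomorphism. Hence $g+\gamma\ge 2\gamma$.

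The two routes trade off as follows. The paper's route is slightly shorter and yields the stronger fact that $\partial U$ dominates $\Sigma_\gamma$ by a degree-one map. Yours avoids degree theory and the cohomology ring, needing only Poincar\'e--Lefschetz duality, in the form of half-lives-half-dies, together with the homotopy equivalence between a level surface and the collar.

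Some cosmetic points:
\begin{enumerate}
\item The Mayer--Vietoris count and the half-lives-half-dies estimate for $\overline U$ are never used and can be deleted.
\item It is $W$ that deformation retracts onto $W'$, by pushing the end down, not the reverse. In fact you never need $H_1(W)$: the injectivity argument works directly in $W'$.
\item The product structure of $M_\gamma\setminus\Int K_\gamma$ comes from your explicit collar choice of $H'$, not from $K_\gamma$ being a deformation retract of $M_\gamma$.
\end{enumerate}
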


\begin{proof}
Choose a compact core $K_\gamma$ so that
$$
 M_\gamma\setminus K_\gamma\cong\Sigma_\gamma\times(0,\infty),
$$
where $\Sigma_\gamma$ is the closed orientable surface of genus $\gamma$. If $U$ contains $K_\gamma$, then $\partial U$ lies in the collar and separates its compact end from infinity. Choose $T$ so large that $\overline U$ lies below $\Sigma_\gamma\times\{T\}$. The compact region
$$
 W_T:=(M_\gamma\setminus U)\cap\bigl(\Sigma_\gamma\times(0,T]\bigr)
$$
is an oriented cobordism from $\partial U$ to $\Sigma_\gamma\times\{T\}$. Hence
$$
 [\partial U]=[\Sigma_\gamma\times\{T\}]
 \quad\text{in }H_2(\Sigma_\gamma\times(0,\infty);\Z),
$$
so projection onto $\Sigma_\gamma$ restricts to a map
$$
 f:\partial U\longrightarrow\Sigma_\gamma
$$
of degree $\pm1$.

The induced map
$$
 f^*:H^1(\Sigma_\gamma;\Q)\longrightarrow H^1(\partial U;\Q)
$$
is injective. Indeed, if $f^*\alpha=0$, then for every $\beta\in H^1(\Sigma_\gamma;\Q)$,
$$
 \deg(f)\,\langle\alpha\smile\beta,[\Sigma_\gamma]\rangle
 =\langle f^*(\alpha\smile\beta),[\partial U]\rangle=0,
$$
and Poincar\'e duality on $\Sigma_\gamma$ gives $\alpha=0$. Therefore
$$
 2\gamma=\dim H^1(\Sigma_\gamma;\Q)
 \leq\dim H^1(\partial U;\Q)
 =2\genus(\partial U),
$$
which proves \eqref{eq:handlebody-genus-lower}. This is the fixed-core form of the argument in \cite[Lemma 2.7]{CLX}.
\end{proof}

\subsection{Contractible manifolds}

\begin{lemma}[A genus-two core unless $M\cong\R^3$]\label{lem:contractible-core}
Let $M^3$ be contractible and suppose that $M$ admits a complete metric of nonnegative scalar curvature. If $M$ is not diffeomorphic to $\R^3$, then there exists a compact set $K\Subset M$ such that every admissible separator containing $K$ has genus at least two.
\end{lemma}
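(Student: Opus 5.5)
We argue by contradiction. Suppose $M$ is not diffeomorphic to $\R^3$ and that, for every compact $K\Subset M$, some admissible separator containing $K$ has genus at most one. We will show that $M$ is then either an increasing union of balls or a contractible manifold of genus one. The first alternative contradicts $M\not\cong\R^3$. The second contradicts the scalar-curvature hypothesis, via Wang's theorem.

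The first step records the topological inputs. A contractible $3$-manifold is irreducible. Indeed, an embedded $2$-sphere separates $M$ because $H_2(M)=0$. Its compact side is a compact simply connected $3$-manifold with boundary $S^2$, so by the Poincar\'e conjecture it is a ball. Next, let $U$ be an admissible separator with $\genus(\partial U)\le 1$.
\begin{itemize}
\item If $\partial U\cong S^2$, irreducibility shows that $\overline U$ is a closed ball.
\item If $\partial U$ is a torus, it is compressible because $\pi_1(M)=1$. The standard dichotomy for tori in an irreducible manifold then says that $\partial U$ either bounds a solid torus or lies in a ball $B$.
\end{itemize}
In the torus case, the side bounded by $\partial U$ must be the compact side $\overline U$, since $M\setminus U$ is noncompact. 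If instead $\partial U\subset B$, then $M\setminus U$ is connected and meets $M\setminus B$. Hence $M\setminus U\supset M\setminus B$, so $U\subset B$. In every case, $\overline U$ is a ball, a solid torus, or a compact set contained in a ball.

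The second step rules out the ball alternative. If cofinally many of these separators lie in balls, then every compact subset of $M$ lies in an embedded ball. Passing to a subsequence, $M$ is then an increasing union of closed balls, each contained in the interior of the next. By Brown's monotone union theorem, $M\cong\R^3$, a contradiction. So there is a compact $K_1$ that lies in no ball. Applying the hypothesis to $K\supset K_1$, we obtain an exhaustion $V_1\subset V_2\subset\cdots$ of $M$ by solid tori with $V_i\subset\Int V_{i+1}$. Each inclusion $V_i\hookrightarrow M$ is null-homotopic since $M$ is contractible. Hence each $V_i$ is homotopically trivial in some $V_j$, and after reindexing $V_i$ is null-homotopic in $V_{i+1}$. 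Thus $M$ is a contractible $3$-manifold of genus one, that is, of Whitehead type: it is not $\R^3$ but is a union of solid tori. (If the genus were zero, $M$ would be $\R^3$ by the previous argument.)

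The last step, which we expect to carry the real weight, excludes genus-one contractible manifolds using the curvature hypothesis. We will invoke Wang's theorem \cite{WangI,WangII}: a contractible genus-one $3$-manifold admits no complete metric with $R_g>0$. To pass from $R_g\ge 0$ to $R_g>0$, we split into two cases.
\begin{itemize}
\item If $\Ric_g\not\equiv0$, Kazdan's deformation \cite[Theorem A]{Kazdan} produces a complete metric with positive scalar curvature. This contradicts Wang's theorem.
\item If $\Ric_g\equiv0$, then $g$ is flat. A complete flat simply connected $3$-manifold is isometric to $\R^3$, again a contradiction.
\end{itemize}
Therefore the compact set $K:=K_1$ of the second step, enlarged if convenient, has the required property: every admissible separator containing $K$ has genus at least two. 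The main obstacle is citing Wang's theorem in exactly the form needed, so that its notion of genus-one contractible manifold agrees with the exhaustion by solid tori produced above. If Wang's result is stated only for $R>0$, the Kazdan and flat dichotomy just described supplies the bridge from $R\ge0$.
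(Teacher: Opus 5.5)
Your argument is correct and has the same skeleton as the paper's proof. Both argue by contradiction, build a nested exhaustion by admissible separators of genus at most one, dispose of the sphere branch by an exhaustion theorem, and rule out the torus branch with Wang's theorem.

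The difference is in what is treated as a black box. The paper passes to a subsequence whose boundaries are all spheres or all tori. It cites Husch--Price for spheres and, for tori, a torus-exhaustion rigidity theorem of Wang already stated for $R_g\geq0$ (as recorded in Yan--Zhu, Lemma~2.4). It then upgrades homeomorphism to diffeomorphism by Moise.

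You unpack the topology instead: irreducibility via the Poincar\'e conjecture, the compressible-torus dichotomy, and Brown's monotone union theorem. This makes explicit why an inessential torus, one lying in a ball, causes no trouble. Your route also needs Wang's genus-one theorem only for strictly positive scalar curvature. You pay for that with the Kazdan deformation and the flat case, which is exactly the bridge the paper builds in Proposition~\ref{prop:kazdan-deformation} and Section~\ref{sec:main-theorems}. There is no circularity, since that proposition does not use the present lemma.

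Three small repairs are needed.
\begin{enumerate}
\item Brown's theorem only gives a homeomorphism. You must invoke Moise, as the paper does, to contradict $M\not\cong_{\mathrm{diff}}\R^3$.
\item In the case $\partial U\subset B$, the inclusion $U\subset B$ comes from the connectedness and noncompactness of $M\setminus B$, which is disjoint from $\partial U$. It does not come from connectedness of $M\setminus U$.
\item Your closing sentence overstates what you proved. The contradiction shows that some compact set has the required property. Nothing in your argument shows that $K_1$ itself does, because your reasoning rules out only a full exhaustion by solid tori, not a single solid-torus separator containing $K_1$.
\end{enumerate}
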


\begin{proof}
Suppose no such compact set exists. Let $L_1\Subset L_2\Subset\cdots$ be a compact exhaustion of $M$. Inductively choose admissible separators $U_j$ such that
\begin{equation*}
 L_j\cup\overline{U_{j-1}}\subset U_j,
 \qquad
 \genus(\partial U_j)\leq1,
\end{equation*}
with $U_0=\varnothing$. Then $\{U_j\}$ is a smooth nested exhaustion with connected boundaries and connected complements. Passing to an infinite subsequence, all boundaries are spheres or all are tori; the subsequence remains a nested exhaustion.

If all boundaries are spheres, the exhaustion theorem of Husch--Price implies that $M$ is homeomorphic to $\R^3$; see \cite{HuschPrice,HuschPriceAddendum} and \cite[Lemma 2.3]{CLX}. If all boundaries are tori, the nonnegative-scalar-curvature torus-exhaustion rigidity theorem of Wang, in the form recorded in \cite[Lemma 2.4]{YanZhu}, likewise implies that $M$ is homeomorphic to $\R^3$; see \cite{WangI,WangII}. By Moise's uniqueness theorem for smooth structures on $3$-manifolds \cite{Moise}, either homeomorphism type is represented by a diffeomorphism $M\cong_{\mathrm{diff}}\R^3$. Both alternatives contradict the hypothesis.
\end{proof}

\subsection{Proof of the topological reduction}

\begin{proof}[Proof of Proposition~\ref{prop:topological-reduction}]
For part \textup{(i)}, suppose that $M$ is not diffeomorphic to $\R^3$. Lemma~\ref{lem:contractible-core} provides a compact set $K\Subset M$ such that every admissible separator containing $K$ has genus at least two. This contradicts property $\mathrm{(LG)}$.

For part \textup{(ii)}, let $K_\gamma$ be the compact core from Lemma~\ref{lem:handlebody-core}. Property $\mathrm{(LG)}$ gives an admissible separator $U$ containing $K_\gamma$ with $\genus(\partial U)\leq1$, whereas Lemma~\ref{lem:handlebody-core} gives $\genus(\partial U)\geq\gamma$. Hence $\gamma\leq1$.
\end{proof}

\section{A bounded-gradient filled Morse exhaustion}\label{sec:morse-exhaustion}

We now begin the fixed-metric analytic argument. The purpose of this section is to construct a nested exhaustion whose regular boundaries move with a uniform positive lower bound on their outward normal speed. This is the outer family used in the Robin-constant variation.

\subsection{Construction of the Morse function}

\begin{proposition}[Bounded-gradient proper Morse function]\label{prop:morse-function}
Let $(M,g)$ be complete and let $p\in M$. There exists a smooth proper Morse function
$$
 \rho:M\longrightarrow[0,\infty)
$$
and a finite constant $L$ such that
\begin{equation}\label{eq:rho-gradient}
 |\nabla\rho|\leq L
 \qquad\text{on }M.
\end{equation}
Moreover, every compact interval contains only finitely many critical values of $\rho$.
\end{proposition}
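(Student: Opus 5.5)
We first build a smooth proper function with bounded gradient, and then perturb it to a Morse function without losing properness or the gradient bound. For the first step, let $r(x)=d_g(p,x)$. It is $1$-Lipschitz and proper, since $(M,g)$ is complete and Hopf--Rinow makes closed metric balls compact. We smooth it by a standard partition-of-unity convolution (Greene--Wu type). Choose a locally finite cover of $M$ by small coordinate balls $B_i$ with a subordinate partition of unity $\{\phi_i\}$. Mollify $r$ in each chart to get $r_i$ with $|r_i-r|\le\eps_i$ and $|\nabla r_i|\le 1+\eps_i$ on $\operatorname{spt}\phi_i$; this is possible because $r$ is Lipschitz with constant $1$ and the chart metrics are $C^0$-close to Euclidean on small balls. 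Set $f=\sum_i\phi_i r_i$. Since $\sum_i\nabla\phi_i=0$,
$$
 \nabla f=\sum_i\phi_i\nabla r_i+\sum_i(r_i-r)\nabla\phi_i .
$$
Choosing $\eps_i\le 2^{-i}\bigl(1+\sup|\nabla\phi_i|\bigr)^{-1}$ and using local finiteness gives $|\nabla f|\le 2$ and $|f-r|\le 1$ everywhere. Thus $f$ is smooth, proper, and has $|\nabla f|\le 2$.

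For the second step, we make $f$ Morse by a small perturbation in the strong $C^1$ (Whitney) topology. Morse functions are dense in $C^\infty(M)$ with this topology, so there is a smooth Morse function $\rho_0$ with $|\rho_0-f|\le 1$ and $|\nabla\rho_0-\nabla f|\le 1$ pointwise. Concretely, one can perturb by $\sum_i\phi_i\langle a_i,x\rangle$ in charts with generic small $a_i$ and apply Sard's theorem chart by chart. Then $|\nabla\rho_0|\le 3$, and $\rho_0\ge r-2$, so $\rho_0$ is proper. To land in $[0,\infty)$ we replace $\rho_0$ by $\rho=\rho_0+c$ with $c=2-\min\rho_0$, noting that the minimum exists by properness. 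This keeps both the Morse property and the bound, so \eqref{eq:rho-gradient} holds with $L=3$.

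For the finiteness claim, let $[a,b]$ be a compact interval. The set $\rho^{-1}([a,b])$ is compact because $\rho$ is proper. Critical points of a Morse function are nondegenerate, hence isolated, so the compact set $\rho^{-1}([a,b])$ contains only finitely many of them. Consequently $[a,b]$ contains only finitely many critical values. If the later argument also needs distinct critical values, a further perturbation supported near each critical point, by small distinct constants arranged locally finitely, achieves this while keeping the $C^1$ bounds.

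The main obstacle is to make the global gradient bound survive both the smoothing and the Morse perturbation, since $M$ is noncompact. This is why every error is controlled pointwise in the strong topology rather than uniformly on compact sets. The cross term $\sum_i(r_i-r)\nabla\phi_i$ in the smoothing step requires choosing the $\eps_i$ adapted to $\sup|\nabla\phi_i|$. Everything else is standard.
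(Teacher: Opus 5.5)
Your proof is correct and follows the paper's route: smooth $d_g(p,\cdot)$ to a proper function with bounded gradient, perturb it to a Morse function inside a strong Whitney $C^1$ neighborhood given by pointwise bounds, add a constant, and count the isolated critical points in the compact sets $\rho^{-1}([a,b])$. The only difference is that you carry out the Greene--Wu partition-of-unity smoothing by hand (your choice of $\eps_i$ does give $|\nabla f|\leq 1+\sum_i 2^{-i}=2$), whereas the paper cites the Azagra--Ferrera--L\'opez-Mesas--Rangel Lipschitz approximation theorem for that step.
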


\begin{proof}
The distance function $r_p(x)=d_g(p,x)$ is proper by Hopf--Rinow and is $1$-Lipschitz. By the smooth Lipschitz approximation theorem of Azagra--Ferrera--L\'opez-Mesas--Rangel \cite[Theorem 1]{AzagraEtAl}, there is a smooth function $f$ such that
$$
 |f-r_p|<1,
 \qquad
 \operatorname{Lip}(f)<2.
$$
In particular, $f$ is proper and $|\nabla f|<2$.

Morse functions are dense in the strong Whitney $C^\infty$ topology by jet transversality; see, for example, \cite[Chapter 6]{Hirsch}. We may therefore choose a Morse function $\rho_0$ in a strong neighborhood of $f$ satisfying
$$
 |\rho_0-f|<1,
 \qquad
 |d\rho_0-df|_g<1
$$
at every point. Then
$$
 |\rho_0-r_p|<2,
 \qquad
 |\nabla\rho_0|<3.
$$
Thus $\rho_0$ is proper and bounded from below. Adding a constant gives a map $\rho:M\to[0,\infty)$ satisfying \eqref{eq:rho-gradient}, for instance with $L=3$.

The critical set of a Morse function is discrete and closed. Since $\rho^{-1}([a,b])$ is compact for every bounded interval $[a,b]$, it contains only finitely many critical points and hence only finitely many critical values.
\end{proof}

\subsection{Filled sublevel domains}

Assume from now on that $M$ has one end. Fix a function $\rho$ as in Proposition~\ref{prop:morse-function} and a pole $p\in M$. For $\tau>\rho(p)$, let $C_\tau$ be the connected component of $\{\rho<\tau\}$ containing $p$. Since $C_\tau$ is precompact and $M$ has one end, $M\setminus C_\tau$ has a unique nonprecompact connected component; denote it by $E_\tau$. Define the filled sublevel domain
\begin{equation*}
 D_\tau:=M\setminus E_\tau.
\end{equation*}
Equivalently, $D_\tau$ is obtained from $C_\tau$ by adjoining all precompact complementary components.

\begin{lemma}[Properties of the filled exhaustion]\label{lem:filled-properties}
The family $\{D_\tau\}_{\tau>\rho(p)}$ has the following properties.
\begin{enumerate}[label=\textup{(\roman*)}]
\item If $\tau$ is a regular value, then $D_\tau$ is a connected precompact smooth domain, $M\setminus D_\tau$ is connected, and every component of $\partial D_\tau$ is a component of $\rho^{-1}(\tau)$.
\item If $\sigma<\tau$, then $D_\sigma\subset D_\tau$.
\item The regular members exhaust $M$:
\begin{equation}\label{eq:filled-exhaustion}
 \bigcup_{\substack{\tau>\rho(p)\\ \tau\ \mathrm{regular}}}D_\tau=M.
\end{equation}
\item If $\tau$ is regular, then along $\partial D_\tau$ the outward unit normal is
\begin{equation}\label{eq:rho-normal}
 \nu_\tau=\frac{\nabla\rho}{|\nabla\rho|}.
\end{equation}
\end{enumerate}
\end{lemma}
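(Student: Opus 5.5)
The plan is to prove the four properties directly from the definitions $C_\tau$, $E_\tau$, $D_\tau=M\setminus E_\tau$, using only that $\rho$ is proper and that $M$ has one end.

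\textbf{Step 1: structure of $D_\tau$.} For any $\tau>\rho(p)$, the set $\{\rho\le\tau\}$ is compact by properness. Hence $C_\tau$ is precompact. Any precompact component of $M\setminus C_\tau$ lies in $\overline{D_\tau}$, and $E_\tau$ is the unique unbounded component. So $D_\tau=C_\tau\cup\bigcup\{\text{precompact components}\}$ is precompact.

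Connectedness of $D_\tau$ holds because each precompact complementary component $P$ has nonempty frontier, which lies in $\partial C_\tau$. Thus $P\cup C_\tau$ is connected. The complement $M\setminus D_\tau=E_\tau$ is connected by definition.

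\textbf{Step 2: regular values and the normal (items (i) and (iv)).} Let $\tau$ be regular. Then $\rho^{-1}(\tau)$ is a compact smooth hypersurface, and each point of it has a neighborhood in which $\{\rho<\tau\}$ lies on exactly one side. Hence $\partial C_\tau$ is a union of components of $\rho^{-1}(\tau)$, and $C_\tau$ lies on the side where $\rho<\tau$.

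The key claim is that $\partial D_\tau=\partial E_\tau$ and that this set consists of those components $\Sigma$ of $\partial C_\tau$ whose outer side (where $\rho>\tau$) meets $E_\tau$. I would argue this via collars. Near each component $\Sigma$ of $\partial C_\tau$, a tubular neighborhood $\Sigma\times(-\eps,\eps)$ has its $\rho>\tau$ half connected. That half therefore lies in a single complementary component of $C_\tau$: either $E_\tau$ or a precompact one.

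In the first case, $\Sigma\subset\partial D_\tau$, and locally $D_\tau=\{\rho<\tau\}$. In the second case, $\Sigma$ is interior to $D_\tau$. Points of $M\setminus\overline{C_\tau}$ lying on the frontier of a complementary component must lie in $\partial C_\tau$, so these are the only boundary points. Thus $D_\tau$ is a smooth domain whose boundary components are components of $\rho^{-1}(\tau)$.

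Near such $\Sigma$, $D_\tau$ is the side $\{\rho<\tau\}$. The outward normal therefore points in the direction of increasing $\rho$, which gives \eqref{eq:rho-normal}.

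\textbf{Step 3: monotonicity (item (ii)).} Let $\sigma<\tau$. Then $C_\sigma\subset\{\rho<\tau\}$ is connected and contains $p$, so $C_\sigma\subset C_\tau$. Consequently $E_\tau\subset M\setminus C_\sigma$. Since $E_\tau$ is connected and unbounded, it lies in an unbounded component of $M\setminus C_\sigma$, which must be $E_\sigma$. Hence $E_\tau\subset E_\sigma$, equivalently $D_\sigma\subset D_\tau$.

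\textbf{Step 4: exhaustion (item (iii)).} Critical values are discrete by Proposition~\ref{prop:morse-function}, so regular values are unbounded. Given $x\in M$, join $p$ to $x$ by a compact path $c$. Choose a regular $\tau>\max_c\rho$. Then $c\subset\{\rho<\tau\}$ is connected and contains $p$, so $x\in C_\tau\subset D_\tau$.

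\textbf{Main obstacle.} The only delicate point is the local analysis in Step 2. One must show that filling in the precompact components produces a smooth domain, with no corners, and that each boundary component is an entire level component rather than part of one. The collar argument, using the connectedness of each side of a component of a regular level set, handles this. The one-end hypothesis is used only to guarantee that $E_\tau$ is unique.
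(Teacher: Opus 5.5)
Your arguments for items (ii)--(iv) match the paper's. Monotonicity comes from the uniqueness of the nonprecompact component $E_\sigma$. The exhaustion comes from a path from $p$ and a regular value above the maximum of $\rho$ on it. The boundary and normal description comes from the local structure at a regular level, and your collar analysis is a more careful version of the paper's short justification of (iv). The connectedness of $D_\tau$ and of $M\setminus D_\tau$ in Step 1 is also fine.

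The step that does not go through as written is precompactness in Step 1. You conclude that $D_\tau=C_\tau\cup\bigcup\{\text{precompact components}\}$ is precompact because each piece is. A union of infinitely many precompact sets need not be precompact. At that point you have not shown that $M\setminus C_\tau$ has finitely many components, nor that the filled components lie in a common compact set. Your remark that each of them lies in $\overline{D_\tau}$ is circular.

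The paper addresses exactly this point using one-endedness. Take a compact smooth domain $Q_\tau$ with $C_\tau\subset\Int Q_\tau$. Adjoin to it the closures of the finitely many precompact components of $M\setminus Q_\tau$ to get a compact set $K_\tau$. The connected nonprecompact set $M\setminus K_\tau$ misses $C_\tau$, so it lies in $E_\tau$, and therefore $D_\tau\subset K_\tau$. This works for every $\tau>\rho(p)$.

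For regular $\tau$, which is all item (i) needs, you can also repair it with your own observations. Every component of $M\setminus C_\tau$ has nonempty frontier in $\partial C_\tau$. It therefore contains an entire component of the closed surface $\partial C_\tau$, since such a component is connected and disjoint from the open set $C_\tau$. Because $\partial C_\tau$ has finitely many components, so does $M\setminus C_\tau$, and $D_\tau$ is a finite union of precompact sets.
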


\begin{proof}
Fix first a regular value $\tau$. The component $C_\tau$ is connected and precompact, and $\partial C_\tau$ is a finite union of components of the compact regular level $\rho^{-1}(\tau)$. Hence $M\setminus C_\tau$ has only finitely many connected components: every such component is locally adjacent to at least one component of $\partial C_\tau$. Exactly one is nonprecompact, namely $E_\tau$, because $M$ has one end. Filling all the other components produces the connected open set $D_\tau$; its complement is $E_\tau$ and is connected.

For completeness, we verify precompactness without assuming a priori that all filled pieces lie in a common compact set. Choose a connected smooth compact domain $Q_\tau$ with $C_\tau\subset\Int Q_\tau$. The complement of $Q_\tau$ has finitely many components and, by one-endedness, exactly one nonprecompact component. Adjoin the closures of all remaining components to $Q_\tau$ and denote the resulting compact set by $K_\tau$. Then $M\setminus K_\tau$ is connected, nonprecompact, and disjoint from $C_\tau$; it must therefore be contained in $E_\tau$. Consequently,
$$
 D_\tau=M\setminus E_\tau\subset K_\tau,
$$
so $D_\tau\Subset M$. This proves (i).

If $\sigma<\tau$, then $C_\sigma\subset C_\tau$. The connected nonprecompact set $E_\tau$ is disjoint from $C_\sigma$ and hence is contained in the unique nonprecompact component $E_\sigma$ of $M\setminus C_\sigma$. Thus $E_\tau\subset E_\sigma$ and $D_\sigma\subset D_\tau$. This argument does not require either parameter to be regular.

For $x\in M$, choose a compact path from $p$ to $x$. Any regular value $\tau$ larger than the maximum of $\rho$ on that path satisfies $x\in C_\tau\subset D_\tau$. Regular values are unbounded because $\rho$ is proper and Morse. This proves \eqref{eq:filled-exhaustion}.

Finally, at a regular value the boundary of $D_\tau$ consists precisely of the components of $\rho^{-1}(\tau)$ adjacent to $E_\tau$. Locally $D_\tau$ lies on the side $\rho<\tau$, so its outward normal is \eqref{eq:rho-normal}.
\end{proof}

\begin{lemma}[Regular motion of the filled domains]\label{lem:smooth-motion}
Let $\rho(p)<a<b$, and suppose that $[a,b]$ contains no critical value of
$\rho$. Then there exists a smooth compactly supported ambient isotopy
$\{\Phi_s\}_{0\leq s\leq b-a}$ such that
$
 \Phi_s(D_a)=D_{a+s}.
$
The outward normal velocity of $\partial D_\tau$ is
\begin{equation}\label{eq:velocity-lower}
 V_\tau=\frac{1}{|\nabla\rho|}\geq L^{-1}
 \qquad\text{on }\partial D_\tau.
\end{equation}
\end{lemma}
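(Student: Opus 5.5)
The proof is the standard gradient-flow construction from Morse theory. The main point is to make the flow compactly supported and to check that it respects the filling operation $D_\tau=M\setminus E_\tau$.

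\emph{Construction of the flow.} The set $\rho^{-1}([a,b])$ is compact and, by hypothesis, contains no critical point. So $|\nabla\rho|\geq c>0$ on a neighborhood $\rho^{-1}((a-\eps,b+\eps))$ for some small $\eps>0$, shrinking $\eps$ if needed so that this interval is still free of critical values; this is possible because critical values are finite on compact intervals by Proposition~\ref{prop:morse-function}. Choose a cutoff $\chi\in C_c^\infty(M)$ with $\chi\equiv1$ on $\rho^{-1}([a,b])$ and support inside $\rho^{-1}((a-\eps,b+\eps))$, which is compact. Define
$$
X=\chi\,\frac{\nabla\rho}{|\nabla\rho|^2},
$$
and let $\Phi_s$ be its flow. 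Because $X$ is compactly supported and smooth, $\Phi_s$ is a smooth compactly supported ambient isotopy defined for all $s$. On $\rho^{-1}([a,b])$ we have $X\rho=1$, so $\rho(\Phi_s(x))=\rho(x)+s$ whenever the trajectory stays in that slab. Hence $\Phi_s$ maps $\rho^{-1}(a)$ diffeomorphically onto $\rho^{-1}(a+s)$ for $0\leq s\leq b-a$. More generally, $\Phi_s$ maps $\{\rho<a\}$ onto $\{\rho<a+s\}$: points with $\rho\leq a-\eps$ are fixed, and $\rho$ is nondecreasing along trajectories, strictly increasing with unit speed on the slab.

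\emph{Compatibility with the filling operation.} This is the main thing to verify. Since $\Phi_s$ is a homeomorphism of $M$ mapping $\{\rho<a\}$ onto $\{\rho<a+s\}$, it permutes connected components. It fixes $p$, because $\rho(p)<a-\eps$ after shrinking $\eps$. Therefore $\Phi_s(C_a)=C_{a+s}$. A homeomorphism also carries the complement $M\setminus C_a$ onto $M\setminus C_{a+s}$, component by component. Because $\Phi_s$ is compactly supported, it preserves precompactness of components, so it sends the unique nonprecompact component $E_a$ to $E_{a+s}$. It follows that $\Phi_s(D_a)=M\setminus\Phi_s(E_a)=D_{a+s}$.

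\emph{Normal velocity.} On $\partial D_\tau$, Lemma~\ref{lem:filled-properties}(iv) gives the outward unit normal $\nu_\tau=\nabla\rho/|\nabla\rho|$, and there $\chi=1$. So the normal component of the velocity is
$$
\langle X,\nu_\tau\rangle=\frac{|\nabla\rho|^2}{|\nabla\rho|^2\cdot|\nabla\rho|}=\frac{1}{|\nabla\rho|}.
$$
This is at least $L^{-1}$ by \eqref{eq:rho-gradient}, which proves \eqref{eq:velocity-lower}.
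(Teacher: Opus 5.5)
Your proposal is correct and follows essentially the same route as the paper: flow the cut-off field $\nabla\rho/|\nabla\rho|^2$, show $\Phi_s(\{\rho<a\})=\{\rho<a+s\}$, use the fixed pole to get $\Phi_s(C_a)=C_{a+s}$ and preservation of the unique nonprecompact complementary component to get $\Phi_s(D_a)=D_{a+s}$, and read off $V_\tau=\langle X,\nu_\tau\rangle=1/|\nabla\rho|$ from $\nu_\tau=\nabla\rho/|\nabla\rho|$. The one difference is that the paper takes the cutoff as a function $\chi(\rho)$ of $\rho$. This gives the exact conjugacy $\rho\circ\Phi_s=h_s\circ\rho$ with a flow $h_s$ on $\mathbb R$, so the sublevel identity is immediate. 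With your general cutoff on $M$ you instead need the monotonicity argument you sketch, which is valid because $\chi\geq0$ and $\chi\equiv1$ on $\rho^{-1}([a,b])$, so trajectories cross the levels in $[a,b]$ upward at unit speed.
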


\begin{proof}
Choose $\delta>0$ such that
$
 \rho(p)<a-\delta
$
and the compact slab
$
 \rho^{-1}([a-\delta,b+\delta])
$
contains no critical point of $\rho$. Let
$
 \chi\in C_c^\infty((a-\delta,b+\delta))
$
satisfy $\chi\equiv1$ on $[a,b]$. On the slab define
$$
 Y:=\chi(\rho)\frac{\nabla\rho}{|\nabla\rho|^2},
$$
and extend $Y$ by zero to all of $M$. Since $\chi(\rho)$ vanishes near the
boundary of the slab, $Y$ is a smooth compactly supported vector field.

Let $\Phi_s$ be the flow of $Y$. If $h_s$ denotes the flow on $\mathbb R$
generated by the vector field $\chi(t)\partial_t$, then
\begin{equation}\label{eq:rho-flow-reparametrization}
 \rho(\Phi_s(x))=h_s(\rho(x)).
\end{equation}
Each $h_s$ is increasing, and, since $\chi=1$ on $[a,b]$,
$$
 h_s(a)=a+s
 \qquad(0\leq s\leq b-a).
$$
It follows from \eqref{eq:rho-flow-reparametrization} that
$$
 \Phi_s(\{\rho<a\})=\{\rho<a+s\}.
$$
Moreover, $\Phi_s$ fixes $p$, because $Y$ vanishes near $p$. Hence
$
 \Phi_s(C_a)=C_{a+s}.
$
As $\Phi_s$ is the identity outside a compact set, it maps the unique
nonprecompact component of $M\setminus C_a$ onto the unique nonprecompact
component of $M\setminus C_{a+s}$. Thus
$
 \Phi_s(E_a)=E_{a+s}.
$
Taking complements gives
$
 \Phi_s(D_a)=D_{a+s}.
$

Finally, let $x\in\partial D_a$ and set $x_s:=\Phi_s(x)$. Then
$x_s\in\partial D_{a+s}$ and
$
 \rho(x_s)=a+s.
$
Differentiating in $s$ and using
$
 \nu_{a+s}=\frac{\nabla\rho}{|\nabla\rho|}
$
from Lemma~\ref{lem:filled-properties}, we obtain
$$
 1
 =\frac{d}{ds}\rho(x_s)
 =\langle\nabla\rho,\dot x_s\rangle
 =|\nabla\rho|\,
   \langle\dot x_s,\nu_{a+s}\rangle.
$$
Therefore the outward normal velocity is
$$
 V_{a+s}
 =\langle\dot x_s,\nu_{a+s}\rangle
 =\frac{1}{|\nabla\rho|}.
$$
The bound $|\nabla\rho|\leq L$ from
\eqref{eq:rho-gradient} now gives \eqref{eq:velocity-lower}.
\end{proof}

\section{Dirichlet Green functions, Robin constants, and domain variation}\label{sec:domain-variation}

We now convert the construction in Section~\ref{sec:morse-exhaustion} into monotonicity and variation statements for Dirichlet Green functions and their Robin constants.

\subsection{Normalization and domain monotonicity}

Let $D\Subset M$ be a smooth connected domain containing $p$. Its Dirichlet Green function $G_D=G_D(p,\cdot)$ is normalized by
\begin{equation}\label{eq:green-normalization}
 \Delta_gG_D=-4\pi\delta_p\quad\text{in }D,
 \qquad
 G_D=0\quad\text{on }\partial D.
\end{equation}
Let $\varrho=d_g(p,\cdot)$. In geodesic normal coordinates at $p$, the standard local parametrix gives
\begin{equation}\label{eq:robin-expansion}
 G_D(p,x)=\frac{1}{\varrho}+m_D(p)+\psi_D(x),
 \qquad
 |\nabla^j\psi_D|=O(\varrho^{1-j}),\quad j=0,1,2.
\end{equation}
Here $m_D(p)$ is the Robin constant, or regular part, at $p$; see the Riemannian parametrix construction in \cite{Aubin} and the isolated-singularity analysis in \cite{GilbargSerrin}.

\begin{lemma}[Domain monotonicity]\label{lem:domain-monotonicity}
If $D_1\subset D_2$ are smooth connected domains containing $p$, then
\begin{equation}\label{eq:green-monotonicity}
 G_{D_1}\leq G_{D_2}
 \qquad\text{on }D_1\setminus\{p\},
\end{equation}
and
\begin{equation}\label{eq:robin-monotonicity}
 m_{D_1}(p)\leq m_{D_2}(p).
\end{equation}
If $M$ is nonparabolic and $G_M$ is its minimal positive Green function with the same normalization, then
\begin{equation}\label{eq:global-robin-bound}
 m_D(p)\leq m_M(p)<\infty.
\end{equation}
\end{lemma}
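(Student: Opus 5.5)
The plan is a maximum-principle comparison of the Green functions, followed by reading off the Robin constants from the expansion \eqref{eq:robin-expansion}.

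\textbf{Step 1: comparison of Green functions.} Set $h:=G_{D_2}-G_{D_1}$ on $D_1\setminus\{p\}$. Both functions have the same singular part $1/\varrho$ in \eqref{eq:robin-expansion}, so
$$
 h=m_{D_2}(p)-m_{D_1}(p)+\psi_{D_2}-\psi_{D_1}
$$
is bounded near $p$. Moreover $\Delta_g h=0$ away from $p$, since both functions solve \eqref{eq:green-normalization} with the same point mass. By the removable singularity theorem for bounded harmonic functions \cite{GilbargSerrin}, $h$ therefore extends to a harmonic function on all of $D_1$.

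On $\partial D_1$ we have $G_{D_1}=0$. Also $G_{D_2}\geq0$ there: $G_{D_2}$ is positive in $D_2$ by the strong maximum principle, because it is $+\infty$ near $p$ and vanishes on $\partial D_2$, and $\partial D_1\subset\overline{D_2}$. Hence $h\geq0$ on $\partial D_1$. Since $D_1$ is a smooth precompact domain, the weak maximum principle gives $h\geq0$ in $D_1$, which is \eqref{eq:green-monotonicity}.

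\textbf{Step 2: Robin constants.} Evaluating the extended function at $p$ and using $\psi_{D_i}(x)\to0$ as $x\to p$ (the case $j=0$ of \eqref{eq:robin-expansion}) gives
$$
 h(p)=m_{D_2}(p)-m_{D_1}(p)\geq0,
$$
which is \eqref{eq:robin-monotonicity}.

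\textbf{Step 3: the global bound.} For \eqref{eq:global-robin-bound}, repeat Step 1 with $D_2$ replaced by $M$ and $G_{D_2}$ replaced by $G_M$, using only that $G_M>0$ on $\partial D$. This requires knowing that $G_M$ has the same local expansion $1/\varrho+m_M(p)+o(1)$ with finite $m_M(p)$. That holds because $G_M-1/\varrho$ is locally the sum of a bounded harmonic correction and the parametrix error, via the same local parametrix analysis \cite{Aubin}.

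The one delicate point is this identification of the regular part of $G_M$. An alternative route avoids it: $G_M$ is the increasing limit of the Green functions $G_{D_j}$ of an exhaustion. Since $h_j:=G_M-G_{D_j}$ is harmonic, nonnegative and decreasing in $j$, the limit $G_M-G_D$ is harmonic near $p$, hence finite at $p$. This gives $m_M(p)<\infty$ and $m_D(p)\leq m_M(p)$ directly.
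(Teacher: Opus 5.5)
Your proposal is correct and is essentially the paper's proof: the singular parts cancel, the difference extends harmonically across $p$, and the maximum principle with $G_{D_2}\geq0=G_{D_1}$ on $\partial D_1$ gives \eqref{eq:green-monotonicity}; evaluating at $p$ gives \eqref{eq:robin-monotonicity}, and the same comparison with $G_M$ gives \eqref{eq:global-robin-bound}. Your extra care about the regular part of $G_M$, which the paper passes over, is reasonable. It is settled most simply by $\Delta_g(G_M-G_D)=0$ in $\mathcal D'(D)$ and Weyl's lemma. Your ``alternative route'' is misstated as written, since $G_M-G_{D_j}\downarrow0$; the working version applies Harnack's monotone convergence theorem to the increasing nonnegative harmonic functions $G_{D_j}-G_D$ on $D$, whose finite limit away from $p$ forces $G_M-G_D$ to be harmonic across $p$.
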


\begin{proof}
The singularities of $G_{D_2}$ and $G_{D_1}$ cancel. Their difference is harmonic on $D_1$, extends smoothly across $p$, and is nonnegative on $\partial D_1$. The maximum principle proves \eqref{eq:green-monotonicity}; evaluating the regular parts at $p$ gives \eqref{eq:robin-monotonicity}. The same comparison with $G_M$ gives \eqref{eq:global-robin-bound}. No behavior of $G_M$ at infinity is used.
\end{proof}

\subsection{Hadamard variation on regular intervals}

For a smooth family of domains $D_\tau$ and a boundary point moving with outward normal speed $V_\tau$, put
\begin{equation*}
 q_\tau:=-\partial_{\nu_\tau}G_{D_\tau}>0
 \qquad\text{on }\partial D_\tau.
\end{equation*}
The strict positivity follows from the Hopf boundary lemma.

\begin{proposition}[Hadamard formula for the Robin constant]\label{prop:hadamard}
Let $\{D_\tau\}_{\tau\in I}$ be a smooth family of smooth precompact domains containing $p$, and assume that the ambient metric and the pole are fixed. If the outward normal velocity is $V_\tau$, then
\begin{equation}\label{eq:hadamard}
 \frac{d}{d\tau}m_{D_\tau}(p)
 =\frac{1}{4\pi}\int_{\partial D_\tau}V_\tau q_\tau^2\dd A_g.
\end{equation}
For the filled Morse family on a regular interval, this becomes
\begin{equation}\label{eq:hadamard-morse}
 \frac{d}{d\tau}m_{D_\tau}(p)
 =\frac{1}{4\pi}\int_{\partial D_\tau}\frac{q_\tau^2}{|\nabla\rho|}\dd A_g
 \geq\frac{1}{4\pi L}\int_{\partial D_\tau}q_\tau^2\dd A_g.
\end{equation}
\end{proposition}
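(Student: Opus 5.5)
The plan is to derive \eqref{eq:hadamard} from a Green-identity representation of the Robin constant, apply it to the pulled-back family $G_{D_\tau}\circ\Phi_s$, and then specialize to the Morse family.

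\textbf{Step 1: representation of the derivative.} Fix $\tau_0\in I$. For $\tau$ near $\tau_0$, use the isotopy (in the Morse case Lemma~\ref{lem:smooth-motion}) to transport functions to $D_{\tau_0}$. Set
$$
 w_\tau:=G_{D_\tau}-G_{D_{\tau_0}}
 \qquad\text{on }D_{\tau_0}\cap D_\tau .
$$
The poles cancel, so $w_\tau$ is harmonic and smooth across $p$, and $w_\tau(p)=m_{D_\tau}(p)-m_{D_{\tau_0}}(p)$.

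\textbf{Step 2: the shape derivative.} Standard shape-derivative theory for the Dirichlet problem with smooth boundary and smooth velocity field shows that $\dot w:=\partial_\tau w_\tau|_{\tau_0}$ exists and is harmonic in $D_{\tau_0}$, including across $p$. Differentiating the identity $G_{D_\tau}(\Phi_{\tau-\tau_0}(x))=0$ for $x\in\partial D_{\tau_0}$ gives the boundary value
$$
 \dot w=-\langle\nabla G_{D_{\tau_0}},\dot x\rangle=V_{\tau_0}q_{\tau_0}
 \qquad\text{on }\partial D_{\tau_0}.
$$
Here we use that $\nabla G=-q\,\nu$ on the boundary, since $G$ vanishes there.

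\textbf{Step 3: evaluating at the pole.} Since $\dot w$ is harmonic, Green's representation with the normalization \eqref{eq:green-normalization} gives
$$
 \dot w(p)=\frac{1}{4\pi}\int_{\partial D_{\tau_0}}\dot w\,\bigl(-\partial_\nu G_{D_{\tau_0}}\bigr)\dd A_g
 =\frac1{4\pi}\int_{\partial D_{\tau_0}} V q^2\dd A_g .
$$
To check the sign, apply Green's second identity to $\dot w$ and $G$ on $D\setminus B_\eps(p)$. The small sphere contributes $4\pi\dot w(p)$ by \eqref{eq:robin-expansion}, and the outer boundary contributes $\int \dot w\,\partial_\nu G$ with the opposite orientation. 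The result is \eqref{eq:hadamard}.

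\textbf{Step 4: the Morse family.} Substituting $V_\tau=|\nabla\rho|^{-1}\geq L^{-1}$ from Lemma~\ref{lem:smooth-motion} gives \eqref{eq:hadamard-morse}.

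\textbf{Main obstacle.} The delicate step is justifying Step 2. One must show that $\tau\mapsto m_{D_\tau}(p)$ is differentiable and that the derivative commutes with evaluation at $p$, even though the functions live on moving domains and carry a singularity. The plan is to pull back by $\Phi_s$ and write $G_{D_\tau}\circ\Phi_{\tau-\tau_0}=\Gamma+h_\tau$ on the fixed domain $D_{\tau_0}$. Here $\Gamma$ is a fixed parametrix with cutoff near $p$; the isotopy is the identity near $p$ because the supporting slab avoids $p$. The regular part $h_\tau$ then solves a Dirichlet problem whose operator $\Phi^*\Delta$ and right-hand side depend smoothly on $\tau$ and are unchanged near $p$. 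Schauder theory together with the implicit function theorem gives smooth dependence of $h_\tau$ in $C^{2,\alpha}(\overline{D_{\tau_0}})$, and hence of $h_\tau(p)$, which differs from $m_{D_\tau}(p)$ only by a $\tau$-independent constant. Relating the derivative of the pulled-back function to $\dot w$ costs only the convective term $\langle\nabla G,Y\rangle$. This term vanishes near $p$ and produces the boundary value in Step 2.
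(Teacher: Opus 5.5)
Your proposal is correct and follows essentially the same route as the paper. The Eulerian derivative of $G_{D_\tau}$ is harmonic across the pole with boundary value $Vq$, Green's second identity on $D_{\tau_0}\setminus B_g(p,\eps)$ evaluates it at $p$ as $\frac{1}{4\pi}\int_{\partial D_{\tau_0}}Vq^2\dd A_g$, and differentiability comes from pulling back along an isotopy fixed near $p$ and applying parameter-dependent Schauder theory. Your explicit handling of the convective term is slightly more careful than the paper's. One small point: the regular part $h_\tau$ is $C^{2,\alpha}$ at $p$ only for a parametrix of sufficiently high order, since with the crude parametrix $\chi(\varrho)/\varrho$ the source is $O(\varrho^{-1})$. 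It is therefore cleanest to differentiate $G_{D_\tau}\circ\Phi_{\tau-\tau_0}-G_{D_{\tau_0}}$, whose equation has a smooth, $\tau$-dependent source supported away from $p$.
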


\begin{proof}
Fix $\tau_0\in I$. Choose a smooth isotopy identifying the domains near $\tau_0$ and equal to the identity in a neighborhood of $p$. Pulling the problem back to $D_{\tau_0}$, subtracting a fixed local parametrix at $p$, and applying parameter-dependent Schauder theory to the resulting uniformly elliptic Dirichlet problem shows that the regular part and the Green function depend differentiably on $\tau$. This is the standard analytic justification of Hadamard's formula; see, for example, \cite[Chapter 5]{HenrotPierre} or \cite{Schiffer}.

Let
$$
 \dot G=\left.\frac{d}{d\tau}\right|_{\tau=\tau_0}G_{D_\tau}
$$
be the Eulerian derivative at fixed interior points. Since the Laplace operator and the pole are fixed,
$$
 \Delta_g\dot G=0\quad\text{in }D_{\tau_0},
$$
and the singular part at $p$ is independent of $\tau$. If $y(\tau)\in\partial D_\tau$ moves with normal velocity $V=V_{\tau_0}$, differentiating $G_{D_\tau}(y(\tau))=0$ gives
\begin{equation}\label{eq:boundary-shape-derivative}
 \dot G=-V\partial_\nu G=Vq
 \qquad\text{on }\partial D_{\tau_0}.
\end{equation}
Apply Green's second identity to $\dot G$ and $G=G_{D_{\tau_0}}$ on $D_{\tau_0}\setminus B_g(p,\eps)$, and then let $\eps\downarrow0$. The inner-boundary term converges to $4\pi\dot G(p)$, where $\dot G(p)$ denotes the derivative of the regular part. Since $G=0$ on the outer boundary, one obtains
$$
 4\pi\dot G(p)
 =-\int_{\partial D_{\tau_0}}\dot G\,\partial_\nu G\dd A_g
 =\int_{\partial D_{\tau_0}}Vq^2\dd A_g,
$$
where \eqref{eq:boundary-shape-derivative} was used in the last equality. This proves \eqref{eq:hadamard}. Equation \eqref{eq:hadamard-morse} follows from Lemma~\ref{lem:smooth-motion}.
\end{proof}

\subsection{Critical values of the outer Morse function}

The filled domains can change discontinuously when $\tau$ crosses a critical value of $\rho$. Only monotonicity is needed there.

\begin{proposition}[Nonnegative critical jumps]\label{prop:critical-jumps}
Let $a<b$ be regular values of $\rho$. Denote by $c_1<\cdots<c_N$ the critical values in $(a,b)$. For each $c_j$, define the one-sided Robin limits along regular parameters by
$$
 m(c_j-):=\lim_{\substack{\tau\uparrow c_j\\ \tau\ \mathrm{regular}}}m_{D_\tau}(p),
 \qquad
 m(c_j+):=\lim_{\substack{\tau\downarrow c_j\\ \tau\ \mathrm{regular}}}m_{D_\tau}(p).
$$
Then the jump
\begin{equation*}
 J_{c_j}:=m(c_j+)-m(c_j-)
\end{equation*}
is nonnegative, and
\begin{equation}\label{eq:hadamard-with-jumps}
 m_{D_b}(p)-m_{D_a}(p)
 =\frac{1}{4\pi}\int_a^b\int_{\partial D_\tau}
 \frac{q_\tau^2}{|\nabla\rho|}\dd A_g\dd\tau
 +\sum_{j=1}^N J_{c_j},
\end{equation}
where the parameter integral is over the regular set. In particular,
\begin{equation}\label{eq:hadamard-lower-integrated}
 m_{D_b}(p)-m_{D_a}(p)
 \geq\frac{1}{4\pi L}\int_a^b\int_{\partial D_\tau}q_\tau^2\dd A_g\dd\tau.
\end{equation}
\end{proposition}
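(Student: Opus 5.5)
The plan is to split $[a,b]$ at the finitely many critical values $c_1<\cdots<c_N$ and treat two kinds of pieces: open regular intervals $(c_{j},c_{j+1})$, with the conventions $c_0=a$ and $c_{N+1}=b$, and the individual critical values. Two monotonicity facts come first. Lemma~\ref{lem:filled-properties}(ii) gives $D_\sigma\subset D_\tau$ for $\sigma<\tau$, with no regularity assumption on the parameters. Lemma~\ref{lem:domain-monotonicity} then shows that $\tau\mapsto m_{D_\tau}(p)$ is nondecreasing along regular parameters. It is also bounded above on $[a,b]$, by $m_{D_b}(p)$. Hence the one-sided limits $m(c_j\pm)$ exist as monotone limits, and $m(c_j-)\leq m(c_j+)$ because every regular $\tau<c_j$ lies below every regular $\tau'>c_j$. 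This gives $J_{c_j}\geq0$.

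On each open regular interval $(c_j,c_{j+1})$, fix a compact subinterval $[\alpha,\beta]$. Lemma~\ref{lem:smooth-motion} shows that $D_\tau$ is a smooth family there, with normal speed $1/|\nabla\rho|$. Proposition~\ref{prop:hadamard} then gives $\tau\mapsto m_{D_\tau}(p)$ as $C^1$ on $[\alpha,\beta]$ with the derivative in \eqref{eq:hadamard-morse}. Integrating, we get
$$
m_{D_\beta}(p)-m_{D_\alpha}(p)=\frac{1}{4\pi}\int_\alpha^\beta\int_{\partial D_\tau}\frac{q_\tau^2}{|\nabla\rho|}\dd A_g\dd\tau .
$$
The integrand is nonnegative, so monotone convergence lets us send $\alpha\downarrow c_j$ and $\beta\uparrow c_{j+1}$. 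The left side tends to $m(c_{j+1}-)-m(c_j+)$. At the endpoints $c_0=a$ and $c_{N+1}=b$, which are regular, these one-sided limits are just $m_{D_a}(p)$ and $m_{D_b}(p)$ by the continuity coming from the $C^1$ statement on a neighborhood. The inner integral is finite as a function of $\tau$ on each regular interval, and the total is finite because it is bounded by the finite Robin difference.

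Summing over $j=0,\dots,N$ telescopes: we obtain $m_{D_b}(p)-m_{D_a}(p)$ on the left, and the integral over the regular set plus $\sum_j\bigl(m(c_j+)-m(c_j-)\bigr)=\sum_j J_{c_j}$ on the right. This is \eqref{eq:hadamard-with-jumps}. Inequality \eqref{eq:hadamard-lower-integrated} follows by dropping the nonnegative jumps and using $|\nabla\rho|\leq L$ from \eqref{eq:rho-gradient}. The critical set has measure zero, so integrating over the regular set equals integrating over $[a,b]$.

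The main obstacle is making sure the Hadamard derivative is usable up to the ends of each open regular interval. It must be a genuine derivative of a continuous function on every compact subinterval, which the isotopy of Lemma~\ref{lem:smooth-motion} together with Proposition~\ref{prop:hadamard} provides. The passage to the endpoints must then go through monotone limits rather than any continuity at the critical values. I would avoid any claim that $m_{D_\tau}$ is continuous at $c_j$; the jumps absorb all discontinuity, and only their sign matters, which domain monotonicity supplies.
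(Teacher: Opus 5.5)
Your proposal is correct and follows essentially the same route as the paper: nesting plus domain monotonicity make the Robin constants monotone and bounded by $m_{D_b}(p)$, which gives the one-sided limits and $J_{c_j}\geq0$. You then integrate the Hadamard formula on compact regular subintervals, pass to the critical endpoints by monotone convergence, and telescope, exactly as the paper does. Your continuity argument at the regular endpoints $a,b$ is equivalent to the paper's convention $m(c_0+)=m_{D_a}(p)$, $m(c_{N+1}-)=m_{D_b}(p)$.
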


\begin{proof}
By Lemma~\ref{lem:filled-properties}, the domains are nested. Lemma~\ref{lem:domain-monotonicity} therefore shows that
$$
 \tau\longmapsto m_{D_\tau}(p)
$$
is nondecreasing on the regular parameters. For every regular $\tau\in[a,b]$, nesting also gives $D_\tau\subset D_b$, and hence
$$
 m_{D_\tau}(p)\leq m_{D_b}(p)<\infty
$$
by domain monotonicity. Thus the function is locally bounded above without any nonparabolicity assumption. Every one-sided limit in the statement consequently exists and is finite, and monotonicity gives $J_{c_j}\geq0$.

Set $c_0=a$ and $c_{N+1}=b$. Fix $j$ and regular numbers
$$
 c_j<\alpha<\beta<c_{j+1}.
$$
On $[\alpha,\beta]$ the family is smooth, so Proposition~\ref{prop:hadamard} gives
\begin{equation*}
 m_{D_\beta}(p)-m_{D_\alpha}(p)
 =\frac{1}{4\pi}\int_\alpha^\beta\int_{\partial D_\tau}
 \frac{q_\tau^2}{|\nabla\rho|}\dd A_g\dd\tau.
\end{equation*}
The integrand is nonnegative. Letting $\alpha\downarrow c_j$ and $\beta\uparrow c_{j+1}$ through regular values and using monotone convergence yields
\begin{align*}
 m(c_{j+1}-)-m(c_j+)
 =\frac{1}{4\pi}\int_{c_j}^{c_{j+1}}\int_{\partial D_\tau}
 \frac{q_\tau^2}{|\nabla\rho|}\dd A_g\dd\tau,
\end{align*}
with the evident conventions $m(c_0+)=m_{D_a}(p)$ and $m(c_{N+1}-)=m_{D_b}(p)$. In particular, the improper integrals at the critical endpoints are finite; there is no additional parameter-singular term on a regular interval.

Summing these identities and inserting
$$
 m(c_j+)-m(c_j-)=J_{c_j}
$$
gives \eqref{eq:hadamard-with-jumps}. Dropping the nonnegative jumps and using $|\nabla\rho|\leq L$ proves \eqref{eq:hadamard-lower-integrated}.
\end{proof}

By \eqref{eq:hadamard-lower-integrated}, the remaining analytic task is to prove a lower bound
$$
 \int_{\partial D_\tau}q_\tau^2\dd A_g\geq c_*>0
$$
that is independent of the outer parameter $\tau$. The entire inner Green-level analysis in the next section is directed toward this estimate.

\section{Dirichlet Green levels}\label{sec:green-levels}

Fixing one outer domain $D$, we now study the inner level sets of its Dirichlet Green-distance function. The section has a single objective: convert a uniform genus-two obstruction for sufficiently large Green levels into a uniform lower bound for the squared Dirichlet $L^2$-norm on $\partial D$. 

Fix a smooth connected domain $D\Subset M$ containing $p$ and assume that $M\setminus D$ is connected. Let $G=G_D(p,\cdot)$ be normalized by \eqref{eq:green-normalization}, and define
\begin{equation*}
 b:=G^{-1}.
\end{equation*}
Then $b\to0$ at the pole and $b\to\infty$ at $\partial D$. In particular, $b:D\setminus\{p\}\to(0,\infty)$ is proper. We use the convention $G(p)=+\infty$ and $b(p)=0$. For a regular value $r$, set
\begin{equation*}
 \Sigma_r:=\{b=r\},
 \qquad
 U_r:=\{b<r\}=\{p\}\cup\{G>r^{-1}\}.
\end{equation*}
Thus $U_r$ is an open neighborhood of $p$ with smooth boundary $\Sigma_r$.

\subsection{Connectivity and the number of level components}

\begin{lemma}[Connectivity of Green superlevels and their complements]\label{lem:green-connectivity}
For every regular value $r>0$, the set $U_r$ is connected and $M\setminus U_r$ is connected. Consequently, if $N(r)$ is the number of components of $\Sigma_r$, then
\begin{equation}\label{eq:level-component-bound}
 N(r)-1\leq b_1(M).
\end{equation}
\end{lemma}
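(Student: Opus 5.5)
We first prove that $U_r$ is connected by the maximum principle, then that $M\setminus U_r$ is connected by a second application of the maximum principle together with the connectedness of $M\setminus D$. The bound \eqref{eq:level-component-bound} then follows from Lemma~\ref{lem:boundary-components}.

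\emph{Connectedness of $U_r$.} Fix a regular value $r>0$. Suppose $W$ is a connected component of $U_r$ with $p\notin W$. Since $b\to\infty$ at $\partial D$, the set $\overline{W}$ is a compact subset of $D\setminus\{p\}$. On $\overline W$ the function $G$ is harmonic, and $G=r^{-1}$ on $\partial W\subset\Sigma_r$. The maximum principle gives $G\le r^{-1}$ on $W$. This contradicts $W\subset\{G>r^{-1}\}$. Hence every component of $U_r$ contains $p$, so $U_r$ is connected.

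\emph{Connectedness of $M\setminus U_r$.} We can write
$$
M\setminus U_r=(M\setminus D)\cup\{x\in D: G(x)\le r^{-1}\}.
$$
The set $M\setminus D$ is connected by hypothesis, and it contains $\partial D$, since $D$ is open. Let $V$ be a connected component of $\{x\in D\setminus\{p\}:G(x)<r^{-1}\}$. Its boundary lies in $\Sigma_r\cup\partial D$. If $\overline V\cap\partial D=\varnothing$, then $\overline V$ is compact in $D\setminus\{p\}$. In that case $G$ is harmonic on $V$ and equals $r^{-1}$ on $\partial V$, so the minimum principle forces $G\ge r^{-1}$ on $V$, a contradiction. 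Therefore every such component $V$ touches $\partial D$, and so it lies in the same connected component as $M\setminus D$.

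Next consider a point $x\in\Sigma_r$. Since $r$ is regular, $\nabla G(x)\neq0$, so every neighborhood of $x$ contains points where $G<r^{-1}$. These points belong to some component $V$ as above, and $x\in\overline V$. Thus every point of $M\setminus U_r$ lies in the connected set
$$
(M\setminus D)\cup\overline{V}\cup\cdots,
$$
and $M\setminus U_r$ is connected. The one point that needs care is that $\overline V\cup(M\setminus D)$ is connected when $\overline V$ meets $\partial D$. This holds because $\partial D\subset M\setminus D$, so the two connected sets share a point.

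\emph{The component bound.} The set $U_r$ is a connected smooth domain with $U_r\Subset M$. Its complement $M\setminus U_r$ is connected, and $\partial U_r=\Sigma_r$. Lemma~\ref{lem:boundary-components} now yields $N(r)-1\le b_1(M)$.

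The main obstacle is the complement connectivity. There we must make sure that sublevel components near the pole are impossible. This holds because $G\to+\infty$ at $p$, so $p$ is never in the closure of $\{G<r^{-1}\}$. We must also rule out the degenerate case in which a component of $\{G\le r^{-1}\}$ consists only of level-set points; the regular-value assumption excludes it.
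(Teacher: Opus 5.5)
Your proof is correct and follows the paper's argument essentially step for step. The maximum principle rules out components of $U_r$ that miss the pole. The minimum principle forces every component of $\{G<r^{-1}\}$ to reach $\partial D$, which glues everything to the connected set $M\setminus D$. Lemma~\ref{lem:boundary-components} then gives the bound.

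The one variation is how you place each point of $\Sigma_r$ in the closure of a single sublevel component. You use $\nabla G\neq0$ at a regular value, via the local half-ball picture, whereas the paper invokes the strong minimum principle; both work here.

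In your first step you should add a line saying why $\overline W$ avoids the pole. A small ball about $p$ lies in $U_r$ and is connected, so it lies in the component of $p$ and is disjoint from $W$. This is immediate, but your current justification only covers $\partial D$.
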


\begin{proof}
Suppose that $U_r$ has a component $C$ not containing $p$. A sufficiently small punctured ball about $p$ lies in the component containing $p$, so $C$ stays away from the pole. Since $G=0$ on $\partial D$, it also stays away from $\partial D$; hence $\overline C\Subset D\setminus\{p\}$. The harmonic function $G$ equals $r^{-1}$ on $\partial C$ and is strictly larger than $r^{-1}$ in $C$, contradicting the maximum principle. Hence $U_r$ is connected.

Let
$$
 \Omega_-:=\{x\in D:G(x)<r^{-1}\},
$$
and let $W$ be a connected component of $\Omega_-$. If $\overline W\cap\partial D=\varnothing$, then $W\Subset D\setminus\{p\}$, the boundary value of $G$ on $\partial W$ is $r^{-1}$, and $G<r^{-1}$ in $W$, contradicting the minimum principle. Thus every $\overline W$ meets $\partial D$.

We also claim that every point of the level $\{G=r^{-1}\}$ lies in the closure of $\Omega_-$. Otherwise $G\geq r^{-1}$ in a neighborhood of such a point, where equality is attained. This would make the point a local minimum of the nonconstant harmonic function $G$, contrary to the strong minimum principle. It follows that
\begin{equation}\label{eq:green-complement-union}
 M\setminus U_r
 =(M\setminus D)\cup\bigcup_{W\in\pi_0(\Omega_-)}\overline W.
\end{equation}
The set $M\setminus D$ is connected by hypothesis, and each connected set $\overline W$ meets $\partial D\subset M\setminus D$. Therefore the union in \eqref{eq:green-complement-union} is connected.

The estimate \eqref{eq:level-component-bound} now follows by applying Lemma~\ref{lem:boundary-components} to $U_r$.
\end{proof}

\begin{corollary}[Connectedness beyond a fixed compact set]\label{cor:large-level-connected}
Let $S$ be the compact set from Lemma~\ref{lem:compact-detector}. If $S\subset U_r$, then $\Sigma_r$ is connected.
\end{corollary}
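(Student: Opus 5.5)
The plan is to apply Lemma~\ref{lem:compact-detector} directly to the domain $U=U_r$, using the connectivity facts already established in Lemma~\ref{lem:green-connectivity}. That lemma needs the standing hypotheses that $M$ is orientable and one-ended with $b_1(M)<\infty$; these are in force in this section, since it serves Theorem~\ref{thm:low-genus}. The lemma also needs $U_r$ to be a connected $C^1$ domain and $M\setminus U_r$ to have no precompact connected component. For a regular value $r$, the set $U_r$ is a precompact open set with smooth boundary $\Sigma_r$; it is connected by Lemma~\ref{lem:green-connectivity}. That lemma also gives connectedness of $M\setminus U_r$. Since $M\setminus U_r\supset M\setminus D$, and $M\setminus D$ is not precompact ($M$ is open and $D\Subset M$), its unique component is nonprecompact. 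So the hypotheses of Lemma~\ref{lem:compact-detector} are satisfied.

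The detector lemma then says that either $\Sigma_r=\partial U_r$ is connected, or every component of $\Sigma_r$ intersects $S$. Suppose $S\subset U_r$. Because $U_r$ is open and $\Sigma_r=\partial U_r$ is disjoint from $U_r$, we get $\Sigma_r\cap S=\varnothing$. Hence no component of $\Sigma_r$ meets $S$. The second alternative can hold for a nonempty boundary only vacuously, and it is excluded because $\Sigma_r\neq\varnothing$: $U_r$ is a nonempty proper precompact open subset of the connected manifold $M$. Therefore $\Sigma_r$ is connected.

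The only point requiring care is the verification that the complement has no precompact component, together with the regularity of $U_r$. The former is exactly the second conclusion of Lemma~\ref{lem:green-connectivity} combined with the noncompactness of $M\setminus D$. The latter holds because $r$ is a regular value of $b$ on $D\setminus\{p\}$, which makes $\Sigma_r$ a smooth closed surface contained in the interior of $D$. I would state explicitly that the corollary is read for regular values $r$, matching the convention under which $U_r$ and $\Sigma_r$ were defined.
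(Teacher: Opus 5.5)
Your proposal is correct and is essentially the paper's argument: Lemma~\ref{lem:green-connectivity} gives connectedness of $U_r$ and $M\setminus U_r$, and then Lemma~\ref{lem:compact-detector} forces $\Sigma_r$ to be connected, because $S\subset U_r$ is disjoint from $\partial U_r=\Sigma_r$. You also make explicit a hypothesis the paper leaves implicit, namely that the complement has no precompact component since it is connected and contains the noncompact set $M\setminus D$; your nonemptiness remark is harmless but unnecessary, since a disconnected set is automatically nonempty.
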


\begin{proof}
By Lemma~\ref{lem:green-connectivity}, $U_r$ and its complement are connected. Since $S\subset U_r$, Lemma~\ref{lem:compact-detector} applies.
\end{proof}

\subsection{A critical-level Colding--Minicozzi identity}

On the regular set of $b$, write
$$
 \nu:=\frac{\nabla b}{|\nabla b|},
 \qquad
 B:=\Hess(b^2)-2|\nabla b|^2g.
$$
For regular $r$, define
\begin{align*}
 A(r)&:=r^{-2}\int_{\Sigma_r}|\nabla b|^2\dd A_g,\\
 B_1(r)&:=r^{-2}\int_{\Sigma_r}\Hess(b^2)(\nu,\nu)\dd A_g,\\
 B_2(r)&:=\int_{\Sigma_r}\frac{|B|^2}{|\nabla b^2|^2}\dd A_g,\\
 S_1(r)&:=\int_{\Sigma_r}R_g\dd A_g,\\
 \kappa(r)&:=\int_{\Sigma_r}K_{\Sigma_r}\dd A_g.
\end{align*}
Here $K_{\Sigma_r}$ is the intrinsic Gauss curvature, and all integrals are summed over the components of $\Sigma_r$.

\begin{lemma}[Colding--Minicozzi regularity for Dirichlet Green functions]\label{lem:CM-localization}
The regular-value functions $A$, $B_1$, and $S_1$ defined above admit continuous representatives on $(0,\infty)$. The function $A$ is locally Lipschitz and absolutely continuous, and for almost every $r>0$ one has
\begin{equation}\label{eq:CM-ae-first-variation}
 rA'(r)=\frac12B_1(r)-A(r).
\end{equation}
\end{lemma}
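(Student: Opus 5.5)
Our plan is to convert each level-set integral into a volume integral over the sublevel (or superlevel) region using the coarea formula and the divergence theorem. Continuity and the differentiation formula then follow directly from the regularity of the resulting bulk integrands. The key observation is that $G=b^{-1}$ is harmonic on $D\setminus\{p\}$. Consequently
$$
 \Delta b^2=\Delta G^{-2}=6G^{-4}|\nabla G|^2=6|\nabla b|^2,
 \qquad
 \Delta b=2\frac{|\nabla b|^2}{b},
$$
and the vector field $b^{-2}\nabla b$ is divergence free (it equals $-\nabla G$). This gives the flux identity $\int_{\Sigma_r}|\nabla b|\,b^{-2}\dd A_g=\int_{\Sigma_r}|\nabla G|\dd A_g=4\pi$ at every regular level.

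For $A$ we write $r^{-2}|\nabla b|^2=\langle |\nabla b|\,b^{-2}\nabla b,\nu\rangle$ on $\Sigma_r$. The divergence theorem on $U_r\setminus B(p,\eps)$ then gives
$$
 A(r)=\lim_{\eps\to0}\Bigl(\int_{\partial B(p,\eps)}\cdots\Bigr)+\int_{U_r}b^{-2}\langle\nabla|\nabla b|,\nabla b\rangle\dd V_g.
$$
Here we use $\operatorname{div}(b^{-2}\nabla b)=0$ and the fact that $|\nabla b|$ is Lipschitz, since $\nabla|\nabla b|$ is bounded by $|\Hess b|$ almost everywhere. Because $|\nabla b|$ is locally Lipschitz and the critical set has measure zero, this is legitimate. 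By the expansion \eqref{eq:robin-expansion}, the pole contribution equals $4\pi$, since $b=\varrho+O(\varrho^2)$ near $p$. The right-hand side is a continuous function of $r$, so it defines the continuous representative. By the coarea formula with $\dd V=|\nabla b|^{-1}\dd A\dd r$, it is locally absolutely continuous, with almost-everywhere derivative
$$
 A'(r)=r^{-2}\int_{\Sigma_r}\langle\nabla|\nabla b|,\nu\rangle\dd A_g=r^{-2}\int_{\Sigma_r}\Hess b(\nu,\nu)\dd A_g.
$$
Local Lipschitz continuity follows because $|\Hess b|$ is locally bounded on compact subsets of $D\setminus\{p\}$, and $\int_{\Sigma_r}|\nabla b|^{-1}\cdot|\nabla b|\,$-type coarea weights are controlled there. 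More precisely, $\int_{\Sigma_r}|\Hess b|\dd A_g\le C\,\mathcal H^2(\Sigma_r)$, and $\mathcal H^2(\Sigma_r)$ is locally bounded in $r$ by the codimension-two structure of the critical set of a harmonic function together with the Hardt--Simon type area bounds for level sets. Finally, $\Hess(b^2)(\nu,\nu)=2b\Hess b(\nu,\nu)+2|\nabla b|^2$, so
$$
 rA'(r)=\tfrac12 B_1(r)-A(r)
$$
at regular values, and hence almost everywhere, since critical values form a null set by Sard's theorem applied to the real-analytic function $G$.

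For $B_1$ and $S_1$ we use the same device. The first is $B_1(r)=r^{-2}\int_{\Sigma_r}\langle\nabla_\nu\nabla b^2,\nabla b\rangle|\nabla b|^{-1}$, which we write as the flux of the vector field $X=b^{-2}|\nabla b|^{-1}\Hess(b^2)(\nabla b,\cdot)$. The second is the flux of $R_g\nu$. For continuity, we expect an approach based on the upper semicontinuity and lower semicontinuity of $r\mapsto\int_{\Sigma_r}f\dd A_g$ for continuous $f$ to be simplest. Level sets of the analytic function $G$ converge in the Hausdorff and varifold sense as $r$ approaches a critical value, and the critical set $\{\nabla b=0\}$ is a finite union of points and analytic curves in a compact set (codimension two). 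The area of $\Sigma_r$ near the critical set tends to zero uniformly. This uses \L ojasiewicz-type local structure: near a critical point $G-G(x_0)$ looks like a harmonic homogeneous polynomial, and the level sets are Lipschitz graphs over cones away from the point. Together these give continuity of $\int_{\Sigma_r}f\dd A_g$ for bounded continuous $f$, and also for the bounded measurable function $\Hess(b^2)(\nu,\nu)$. For the latter, we approximate by the pointwise-defined integrand off an $\eps$-neighbourhood of the critical set and control the remainder by its area.

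The main obstacle is this last step. We need the uniform smallness of $\mathcal H^2(\Sigma_r\cap N_\eps(\mathrm{Crit}))$ as $\eps\to0$, locally uniformly in $r$, together with the weak convergence of the area measures of $\Sigma_r$ across critical values. We would first try to obtain this from the coarea formula combined with the integrability of $|\nabla b|^{-1+\delta}$ near the critical set. Alternatively, we would invoke the Hardt--Hoffmann-Ostenhof--Nadirashvili estimates on nodal and critical sets of solutions of elliptic equations, which yield $\mathcal H^1$-finiteness of the critical set and locally uniform area bounds for level sets.
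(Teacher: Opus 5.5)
Your handling of $A$ is correct, and it is more explicit than the paper's proof, which localizes the Colding--Minicozzi appendix to compact $b$-bands with a cutoff $\chi(b)$ and quotes their first-variation lemma. You write $A(r)$ as the flux of the Lipschitz field $b^{-2}|\nabla b|\nabla b$, whose divergence is $b^{-2}\langle\nabla|\nabla b|,\nabla b\rangle$ because $b^{-2}\nabla b=-\nabla G$. Picking up $4\pi$ at the pole and applying coarea then gives a continuous, locally absolutely continuous representative with $A'(r)=r^{-2}\int_{\Sigma_r}\Hess b(\nu,\nu)\dd A_g$ almost everywhere. The identity $\Hess(b^2)(\nu,\nu)=2b\Hess b(\nu,\nu)+2|\nabla b|^2$ then yields \eqref{eq:CM-ae-first-variation}. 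The local Lipschitz bound needs $\sup_{r\in I}\mathcal H^2(\Sigma_r)<\infty$ on compact $I$. That comes from a doubling-index bound for $G-c$, uniform in $c$, together with nodal-area estimates, not from the codimension-two structure of the critical set; but it is a citable fact.

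The genuine gap is that you do not prove $B_1$ and $S_1$ have continuous representatives, which the statement asserts.

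\begin{itemize}
\item \textbf{Analyticity is unavailable.} Your sketch relies on real analyticity of $G$: \L ojasiewicz structure, $Z=\{\nabla b=0\}$ a finite union of points and analytic curves, and varifold convergence of level sets. But $g$ is only smooth, so $G$ need not be analytic. For smooth metrics one only has $\mathcal H^1(Z\cap\{\alpha\le b\le\beta\})<\infty$.
\item \textbf{The key estimate is left open.} You yourself call $\sup_{r\in I}\mathcal H^2(\Sigma_r\cap N_\eps(Z))\to0$ the main obstacle, where $N_\eps(Z)$ is the $\eps$-neighbourhood of $Z$. Neither proposed route closes it as stated. Coarea only controls $\int_I\mathcal H^2(\Sigma_r\cap N_\eps(Z))\dd r$, which cannot give continuity at every $r$. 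Locally uniform area bounds give no smallness near $Z$.
\item \textbf{What would close it.} You need the scale-invariant bound $\mathcal H^2(\{G=c\}\cap B_s(x))\le Cs^2$, uniform in $c$, in $x$ in the band, and in small $s$. Combine it with a finite cover of $Z\cap\{\alpha\le b\le\beta\}$ by balls with $\sum s_i^2$ small, which exists since $\mathcal H^2(Z)=0$. This handles $S_1$ and $B_1$ alike.
\item \textbf{A cleaner fix for $B_1$.} Your own ``same device'' can be completed. Your field $X$ equals $b^{-2}\nabla W$ with $W=|\nabla b^2|$, and the missing fact is that $\operatorname{div}X$ has no singular part on $Z$. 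Since $W$ is the norm of a $C^2$ vector field, it is locally semiconvex, so $D^2W$ is a Radon measure. Its jump and Cantor parts are concentrated on $Z$ and vanish because $\mathcal H^2(Z)=0$, so $W\in W^{2,1}_{\mathrm{loc}}$. Gauss--Green on regular bands then writes $B_1(r)$ as a constant plus the integral of an $L^1$ density over $\{r_0<b<r\}$, which is continuous in $r$. This is the mechanism of Lemma~\ref{lem:positive-defect}.
\end{itemize}

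The omission matters. Continuity of $B_1$ is exactly what upgrades $rA$ to $C^1$ in Proposition~\ref{prop:measure-identity}, and the Riccati comparison depends on that.
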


\begin{proof}
Regard $D$ as an open manifold. The function $G:D\setminus\{p\}\to(0,\infty)$ is positive and harmonic, and every compact $b$-band is relatively compact in $D\setminus\{p\}$. Indeed, for $0<\alpha<\beta<\infty$ the set
$$
 \{\alpha\leq b\leq\beta\}
 =G^{-1}([\beta^{-1},\alpha^{-1}])
$$
stays away from both $p$ and $\partial D$. The pole expansion \eqref{eq:robin-expansion} is the Green-function asymptotic used in \cite{CM}.

Fix $[\alpha,\beta]\Subset(0,\infty)$ and choose
$$
 0<\alpha'<\alpha<\beta<\beta'<\infty
$$
so that $\{\alpha'\leq b\leq\beta'\}\Subset D\setminus\{p\}$. Let $\chi=\chi(b)$ be supported in this larger band and identically one on a neighborhood of $\{\alpha\leq b\leq\beta\}$. The coarea, divergence, and first-variation arguments in  of \cite[Appendix~A]{CM} use only the geometry of the relevant compact band. Multiplying the test fields there by $\chi$ makes them compactly supported in $D\setminus\{p\}$, while every term containing $d\chi$ is supported outside the levels $r\in[\alpha,\beta]$. Thus the continuity and absolute-continuity conclusions of that appendix, together with \cite[Lemma 1.9]{CM}, apply without an additional boundary term and give \eqref{eq:CM-ae-first-variation}. Since $[\alpha,\beta]$ was arbitrary, the assertions hold on $(0,\infty)$. No completeness of $D$ is used.
\end{proof}

\begin{lemma}[No critical-set defect]\label{lem:positive-defect}
Set
$$
 W:=|\nabla b^2|,
 \qquad
 Z:=\{\nabla b=0\}.
$$
Then
$$
 \Vol_g(Z)=0,
 \qquad
 W\in W^{2,1}_{\mathrm{loc}}(D\setminus\{p\}).
$$
Consequently, if $\omega$ is $C^1$ on a neighborhood of a compact
$b$-band, then
\begin{equation}\label{eq:weighted-divergence-measure}
 \operatorname{div}_g(\omega\nabla W)=h_\omega
 \quad\text{in }\mathcal D',
 \qquad
 h_\omega
 :=\omega\Delta_gW+\langle\nabla\omega,\nabla W\rangle
 \in L^1_{\mathrm{loc}}.
\end{equation}
The density $h_\omega$ agrees almost everywhere on $\{\nabla b\neq0\}$
with the classical divergence. If $0<\alpha<\beta$ are regular values,
then
\begin{align}
 &\int_{\Sigma_\beta}
   \omega\langle\nabla W,\nu\rangle\dd A_g
 -\int_{\Sigma_\alpha}
   \omega\langle\nabla W,\nu\rangle\dd A_g \notag\\
 &\qquad=
 \int_{\{\alpha<b<\beta\}}h_\omega\dd V_g.
 \label{eq:weighted-Gauss-Green}
\end{align}
\end{lemma}

\begin{proof}
Put $f:=b^2$. Fix a smooth coordinate ball
$U\Subset D\setminus\{p\}$, shrink it so that its coordinate image is
convex, and choose a smooth $g$-orthonormal frame
$e_1,e_2,e_3$ on $U$. Writing
$$
 Y:=(e_1f,e_2f,e_3f),
 \qquad
 W=|Y|,
$$
we have $Y\in C^2(U;\mathbb R^3)$. Hence, on every relatively compact
subball of $U$,
$$
 |Y(x+h)+Y(x-h)-2Y(x)|\leq C|h|^2.
$$
By the triangle inequality,
$$
 W(x+h)+W(x-h)-2W(x)\geq-C|h|^2.
$$
Thus $W$ is locally semiconvex in coordinate variables. In particular,
$$
 W\in W^{1,\infty}_{\mathrm{loc}}(U),
 \qquad
 \partial_iW\in BV_{\mathrm{loc}}(U);
$$
see \cite[Chapters 2--3]{CannarsaSinestrari} and
\cite[Chapter 3]{AFP}.

We next examine the possible singular part of the Hessian. Since
$$
 \nabla b=-G^{-2}\nabla G,
$$
the critical set $Z$ agrees with $\{\nabla G=0\}$. Covering a compact
$b$-band by finitely many harmonic-coordinate charts, we have
$$
 g^{ij}\partial_{ij}G=0
$$
with smooth uniformly elliptic coefficients. It follows from
\cite[Theorem 1.1]{HardtEtAl} that
$$
 \mathcal H^1\bigl(Z\cap\{\alpha\leq b\leq\beta\}\bigr)<\infty
$$
on every compact band. In particular,
$$
 \mathcal H^2(Z)=0,
 \qquad
 \Vol_g(Z)=0
$$
locally in $D\setminus\{p\}$.

In a coordinate chart, set $v_i:=\partial_iW$. Its $BV$ derivative
decomposes as
$$
 Dv_i=\nabla v_i\,\mathcal L^3+D^jv_i+D^cv_i.
$$
The function $W$ is smooth on $U\setminus Z$, so both singular terms are
concentrated on $Z$. The jump part satisfies
$$
 |D^jv_i|\ll\mathcal H^2,
$$
whereas the Cantor part vanishes on every $\sigma$-finite
$\mathcal H^2$ set; see
\cite[Theorem 3.78 and Proposition 3.92]{AFP}. Since
$\mathcal H^2(Z)=0$, both terms vanish:
$$
 D^jv_i=D^cv_i=0.
$$
Therefore \(Dv_i\) is absolutely continuous with an \(L^1_{\mathrm{loc}}\)
density, and hence
$$
 W\in W^{2,1}_{\mathrm{loc}}(D\setminus\{p\}).
$$
Passing from coordinate to covariant second derivatives only adds terms
consisting of smooth Christoffel symbols multiplied by
$DW\in L^\infty_{\mathrm{loc}}$. Thus the weak Laplacian
$\Delta_gW$ belongs to \(L^1_{\mathrm{loc}}\). Since \(W\) is smooth on
$\{\nabla b\neq0\}\), this weak Laplacian agrees there almost everywhere
with the classical one.

The weak product rule now gives
$$
 \operatorname{div}_g(\omega\nabla W)
 =\omega\Delta_gW+\langle\nabla\omega,\nabla W\rangle
 =h_\omega,
$$
which proves \eqref{eq:weighted-divergence-measure}. Moreover,
$$
 \omega\nabla W\in W^{1,1}
$$
on a neighborhood of every compact $b$-band.

Finally, let
$$
 \Omega_{\alpha,\beta}:=\{\alpha<b<\beta\},
$$
where $\alpha$ and $\beta$ are regular values. This is a precompact
smooth domain, whose outward unit normal is $\nu$ on $\Sigma_\beta$ and
$-\nu$ on $\Sigma_\alpha$. Applying the Sobolev Gauss--Green theorem to
the \(W^{1,1}\) vector field \(\omega\nabla W\) gives
$$
 \int_{\Omega_{\alpha,\beta}}h_\omega\dd V_g
 =
 \int_{\Sigma_\beta}
 \omega\langle\nabla W,\nu\rangle\dd A_g
 -
 \int_{\Sigma_\alpha}
 \omega\langle\nabla W,\nu\rangle\dd A_g,
$$
which is \eqref{eq:weighted-Gauss-Green}.
\end{proof}

\begin{proposition}[Critical-level Colding--Minicozzi identity]
\label{prop:measure-identity}
The regular-value functions $A$ and $B_1$ extend continuously to
$(0,\infty)$, with $A\in C^1(0,\infty)$, $A\geq0$, and
\begin{equation}\label{eq:A-first-variation}
 2(rA(r))'=B_1(r).
\end{equation}
Moreover,
$$
 B_2-2\kappa\in L^1(0,R)
 \qquad\text{for every }R>0,
$$
and
\begin{equation}\label{eq:exact-CM-with-measure}
 rB_1(r)
 =4rA(r)
 +\int_0^r\bigl(S_1(s)+B_2(s)-2\kappa(s)\bigr)\dd s
\end{equation}
for every $r>0$. If $R_g\geq0$ on $D$, then
\begin{equation}\label{eq:one-sided-CM}
 rB_1(r)
 \geq4rA(r)
 +\int_0^r\bigl(B_2(s)-2\kappa(s)\bigr)\dd s.
\end{equation}
\end{proposition}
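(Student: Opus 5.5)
Our plan is to derive the identity first on regular levels, where the Colding--Minicozzi computation is classical, and then pass through critical levels with the weighted Gauss--Green formula of Lemma~\ref{lem:positive-defect}. The first variation \eqref{eq:A-first-variation} is simply \eqref{eq:CM-ae-first-variation} rewritten: $2(rA)'=2A+2rA'=B_1$ almost everywhere. We will upgrade this to a $C^1$ statement by showing that $B_1$ has a continuous representative. Lemma~\ref{lem:CM-localization} supplies such a representative, and an absolutely continuous function whose a.e.\ derivative is continuous is $C^1$. Positivity $A\geq0$ is immediate from the definition and survives the continuous extension.

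For the second-order identity, we follow \cite{CM}. On the regular set we express $r^{-2}\Hess(b^2)(\nu,\nu)|\nabla b|$ through $\langle\nabla W,\nu\rangle$ with a weight $\omega=\omega(b)$; for harmonic $G$ and $f=b^2$ one has $\Delta f=6|\nabla b|^2$, so $\Hess f(\nabla f,\nabla f)=\tfrac12\langle\nabla|\nabla f|^2,\nabla f\rangle$. This lets us write $rB_1(r)$, up to the explicit term $4rA(r)$, as a flux $\int_{\Sigma_r}\omega\langle\nabla W,\nu\rangle\,dA$, with $\omega$ a power of $b$ chosen to absorb the $r^{-2}$ and the $|\nabla b|$ factors. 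Applying \eqref{eq:weighted-Gauss-Green} between regular values $\alpha<\beta$ then turns the difference of fluxes into $\int_{\{\alpha<b<\beta\}}h_\omega$. By Lemma~\ref{lem:positive-defect}, $h_\omega$ equals the classical divergence a.e., and $Z$ is null. Off $Z$ we compute $h_\omega$ classically using Bochner's formula, the Gauss equation on the level $\Sigma_s$ (traced twice: $R_g=2K_{\Sigma}+2\Ric(\nu,\nu)-H^2+|A_\Sigma|^2$), and the relation between the level second fundamental form and $B$. Exactly as in \cite[Appendix A]{CM}, after coarea the integrand becomes $\bigl(S_1+B_2-2\kappa\bigr)(s)$ plus total-derivative terms that combine into $4(rA)'$-type contributions.

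This yields, for regular $\alpha<\beta$,
\[
\beta B_1(\beta)-4\beta A(\beta)-\alpha B_1(\alpha)+4\alpha A(\alpha)=\int_\alpha^\beta\bigl(S_1+B_2-2\kappa\bigr)\,ds .
\]
Because $h_\omega\in L^1_{\mathrm{loc}}$ and $\Vol(Z)=0$, the coarea formula shows that $s\mapsto\int_{\Sigma_s}(\cdots)$ is $L^1_{\mathrm{loc}}(0,\infty)$. We will also check that $S_1\in L^1_{\mathrm{loc}}$ directly, since $|R_g|$ is bounded on bands and $\int|\nabla b|^{-1}$ over a band is finite by coarea. Hence $B_2-2\kappa\in L^1_{\mathrm{loc}}(0,\infty)$, and by continuity of $A$ and $B_1$ the identity extends to all $\alpha,\beta>0$ with no defect measure.

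The limit $\alpha\downarrow0$ uses the pole expansion \eqref{eq:robin-expansion}: $b=\varrho+O(\varrho^2)$, $|\nabla b|\to1$, and $\Hess b^2\to2g$. Consequently $\alpha A(\alpha),\alpha B_1(\alpha)\to0$, and the integrands are integrable near $0$. Indeed, $\kappa(s)=4\pi$ for small $s$ because the small levels are near-round spheres, and $B_2(s)=O(s^2)$, so the integrals converge on $(0,R)$. This gives \eqref{eq:exact-CM-with-measure}, and \eqref{eq:one-sided-CM} follows by dropping $\int S_1\geq0$.

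The main obstacle is the bookkeeping in the classical computation of $h_\omega$ off $Z$. We must choose $\omega$ so that the divergence reproduces exactly $|B|^2/|\nabla b^2|^2$, $R_g$, and $-2K_\Sigma$, with no leftover terms involving $|\nabla b|^{-1}$ that fail to be integrable near $Z$. The first attempt will be to copy the weight from \cite[Lemma 1.9]{CM} verbatim and to verify integrability of each resulting term using $W\in W^{2,1}_{\mathrm{loc}}$.
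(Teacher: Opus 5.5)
Your proposal is essentially the paper's proof: the first variation and $C^1$ upgrade come from Lemma~\ref{lem:CM-localization}, the main identity comes from the weighted Gauss--Green formula of Lemma~\ref{lem:positive-defect} on regular bands together with coarea and $\Vol_g(Z)=0$, and the pole expansion sends the inner level to $0$. The weight you are looking for is $\omega=b^{-1}$, whose flux through $\Sigma_r$ is exactly $rB_1(r)$ and whose density satisfies $\int_{\Sigma_s}h_{b^{-1}}|\nabla b|^{-1}\dd A_g=S_1+B_2+2B_1-2\kappa$ by \eqref{eq:pointwise-CM}, so the $4rA$ term comes from $\int_0^rB_1=2rA(r)$ and no termwise integrability near $Z$ is needed (because $h_{b^{-1}}\in L^1_{\mathrm{loc}}$ as a whole); also, $S_1\in L^1_{\mathrm{loc}}$ follows from $\int_\alpha^\beta\int_{\Sigma_s}|R_g|\dd A_g\dd s=\int_{\{\alpha<b<\beta\}}|R_g|\,|\nabla b|\dd V_g$, not from any bound on $\int|\nabla b|^{-1}$.
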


\begin{proof}
By Lemma~\ref{lem:CM-localization}, the regular-value functions $A$ and
$B_1$ admit continuous extensions to $(0,\infty)$, the function $A$ is
locally absolutely continuous, and
$$
 rA'(r)=\frac12B_1(r)-A(r)
$$
for almost every $r>0$. Hence
$$
 2(rA(r))'=B_1(r)
$$
almost everywhere. Since $B_1$ is continuous, integration over compact
intervals shows that $rA(r)$ is $C^1$ and proves
\eqref{eq:A-first-variation} for every $r>0$. At each regular value,
the divergence theorem and the normalization
\eqref{eq:green-normalization} give
$$
 r^{-2}\int_{\Sigma_r}|\nabla b|\dd A_g=4\pi,
$$
so $A(r)>0$. Since regular values are dense, continuity gives
$A\geq0$ on $(0,\infty)$.

Because $G=b^{-1}$ is harmonic away from $p$, one has
\begin{equation}\label{eq:b-identities}
 \operatorname{div}_g(b^{-2}\nabla b)=0,
 \qquad
 \Delta_gb^2=6|\nabla b|^2
 \qquad\text{on }D\setminus\{p\}.
\end{equation}
Set
$$
 f:=b^2,
 \qquad
 W:=|\nabla f|.
$$
On the regular set of $b$, the Bochner--Gauss computation of
\cite[Proposition 1.17]{CM} gives
\begin{equation}\label{eq:pointwise-CM}
 \frac{\Delta_gW}{W}
 =\frac12R_g-K_{\Sigma_b}
 +\frac12\frac{|B|^2}{W^2}
 +\frac32\frac{\Hess(b^2)(\nu,\nu)}{b^2}.
\end{equation}

Apply Lemma~\ref{lem:positive-defect} with the weight $\omega=b^{-1}$.
It follows that
$$
 h_{b^{-1}}
 :=\operatorname{div}_g(b^{-1}\nabla W)
 \in L^1_{\mathrm{loc}}(D\setminus\{p\})
$$
in the weak sense, with no singular part supported on the critical set.
On every regular level,
\begin{equation}\label{eq:normal-W-Hess}
 \partial_\nu W=\Hess(b^2)(\nu,\nu).
\end{equation}
Consequently,
$$
 \int_{\Sigma_s}b^{-1}\partial_\nu W\dd A_g
 =sB_1(s).
$$

On the regular set,
$$
 h_{b^{-1}}
 =b^{-1}\Delta_gW
 -b^{-2}\langle\nabla b,\nabla W\rangle.
$$
Since $W=2b|\nabla b|$, equations
\eqref{eq:pointwise-CM} and \eqref{eq:normal-W-Hess} imply that, for
almost every regular value $s$,
\begin{align*}
 \int_{\Sigma_s}\frac{h_{b^{-1}}}{|\nabla b|}\dd A_g
 &=S_1(s)+B_2(s)+2B_1(s)-2\kappa(s).
\end{align*}

By Sard's theorem, the critical values of $b$ form a null set; assign
arbitrary values, say zero, to $B_2$ and $\kappa$ there. Lemma
\ref{lem:positive-defect}, the vanishing volume of the critical set, and
the coarea formula give, for regular values $0<\eps<r$,
$$
 \int_\eps^r\int_{\Sigma_s}
 \frac{|h_{b^{-1}}|}{|\nabla b|}\dd A_g\dd s
 =
 \int_{\{\eps<b<r\}}|h_{b^{-1}}|\dd V_g
 <\infty.
$$
Applying the weighted Gauss--Green formula on $\{\eps<b<r\}$ therefore
yields
\begin{equation}\label{eq:annular-CM-identity}
 rB_1(r)-\eps B_1(\eps)
 =
 \int_\eps^r
 \bigl(S_1(s)+B_2(s)+2B_1(s)-2\kappa(s)\bigr)\dd s.
\end{equation}

It remains to control the pole. From the expansion
\eqref{eq:robin-expansion}, inversion gives
$$
 b=\varrho-m_D(p)\varrho^2+\theta,
 \qquad
 |\nabla^j\theta|=O(\varrho^{3-j}),
 \quad j=0,1,2.
$$
Together with
$$
 \Hess(\varrho^2)=2g+O(\varrho^2)
$$
in geodesic normal coordinates, this yields
$$
 b=\varrho+O(\varrho^2),
 \qquad
 |\nabla b|^2=1+O(\varrho),
 \qquad
 \Hess(b^2)=2g+O(\varrho).
$$
Thus $\nabla b\neq0$ near $p$, and every sufficiently small level is a
sphere. The standard normal-coordinate area expansion then gives
$$
 \begin{aligned}
  A(s)&=4\pi+O(s),&
  B_1(s)&=8\pi+O(s),&
  \kappa(s)&=4\pi,\\
  B_2(s)&=O(s^2),&
  S_1(s)&=O(s^2).
 \end{aligned}
$$
In particular,
$$
 \eps A(\eps)\longrightarrow0,
 \qquad
 \eps B_1(\eps)\longrightarrow0,
$$
and both
$$
 S_1+B_2+2B_1-2\kappa
 \quad\text{and}\quad
 S_1+B_2-2\kappa
$$
are integrable at the origin. Together with
\eqref{eq:annular-CM-identity}, this also proves their local
integrability away from the origin.

Integrating \eqref{eq:A-first-variation} from $\eps$ to $r$ and letting
$\eps\downarrow0$ gives
$$
 \int_0^rB_1(s)\dd s=2rA(r).
$$
Letting $\eps\downarrow0$ in \eqref{eq:annular-CM-identity} therefore
gives, for every regular value $r$,
\begin{align*}
 rB_1(r)
 &=\int_0^r
   \bigl(S_1+B_2+2B_1-2\kappa\bigr)(s)\dd s\\
 &=4rA(r)
   +\int_0^r
   \bigl(S_1+B_2-2\kappa\bigr)(s)\dd s.
\end{align*}
The left-hand side is continuous in $r$, while the right-hand side is
continuous by integrability of its level integrand. Since regular
values are dense, the identity extends to every $r>0$, proving
\eqref{eq:exact-CM-with-measure}.

Finally, $S_1\in L^1(0,R)$ for every $R>0$ by
Lemma~\ref{lem:CM-localization} and the pole estimates above. Hence
$$
 B_2-2\kappa
 =
 \bigl(S_1+B_2-2\kappa\bigr)-S_1
 \in L^1(0,R).
$$
If $R_g\geq0$, then $S_1\geq0$ almost everywhere. Dropping its integral
from \eqref{eq:exact-CM-with-measure} gives
\eqref{eq:one-sided-CM}.
\end{proof}

\subsection{The Riccati inequality under nonpositive Euler characteristic}

On any interval on which $A>0$, define
\begin{equation*}
 a(r):=\frac{rA'(r)}{A(r)}.
\end{equation*}
Since $A\in C^1$, the function $a$ is continuous on that interval.

\begin{proposition}[Integral Riccati inequality]\label{prop:riccati}
Assume that $R_g\geq0$ on $D$, that $A(r)>0$ for every $r\in[r_0,\infty)$, and that every regular level $\Sigma_r$ is connected and has genus at least one for $r\in[r_0,\infty)$. Then, for arbitrary $r_0\leq r_1<r_2$,
\begin{equation}\label{eq:riccati}
 a(r_2)-a(r_1)
 \geq\int_{r_1}^{r_2}\left(1-\frac{a(r)^2}{4}\right)\frac{\dd r}{r}.
\end{equation}
\end{proposition}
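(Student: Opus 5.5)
The plan is to turn the one-sided identity \eqref{eq:one-sided-CM} into a differential inequality for $a$. Three inputs are needed: Gauss--Bonnet to sign $\kappa$, an algebraic lower bound for $B_2$ in terms of $A$ and $A'$, and a careful integration step, since $a$ is only known to be continuous.

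\emph{Step 1: rewrite the identity.} By \eqref{eq:A-first-variation}, $B_1=2A+2rA'$. Set $\Phi(r):=r^2A'(r)=rA(r)\,a(r)$, which is continuous. Then $rB_1-4rA=2\Phi-2rA$. Subtracting \eqref{eq:exact-CM-with-measure} at $r_1$ from the same identity at $r_2$, and dropping $S_1\geq0$, gives
\begin{equation*}
 2\bigl(\Phi-rA\bigr)\Big|_{r_1}^{r_2}\geq\int_{r_1}^{r_2}\bigl(B_2(s)-2\kappa(s)\bigr)\dd s .
\end{equation*}
For almost every $s\geq r_0$ the level $\Sigma_s$ is regular, connected and of genus at least one. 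Gauss--Bonnet then gives $\kappa(s)=2\pi\chi(\Sigma_s)\leq0$, so the $\kappa$ term may be dropped. The difference of the two sides is therefore a nondecreasing function of $r$, and $\Phi$ is the sum of an absolutely continuous function and a nondecreasing one.

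\emph{Step 2: lower bound for $B_2$.} By \eqref{eq:b-identities}, $\tr B=\Delta_g b^2-6|\nabla b|^2=0$. For a traceless symmetric tensor, splitting off the normal direction gives $|B|^2\geq B(\nu,\nu)^2+\tfrac12(\tr_{\Sigma}B)^2=\tfrac32B(\nu,\nu)^2$. Put $X:=B(\nu,\nu)=\Hess(b^2)(\nu,\nu)-2|\nabla b|^2$. Then $\int_{\Sigma_r}X\dd A_g=r^2(B_1-2A)=2r^3A'$. Cauchy--Schwarz against the weight $|\nabla b|^2$ gives
\begin{equation*}
 \Bigl(\int_{\Sigma_r}X\Bigr)^2\leq\int_{\Sigma_r}\frac{X^2}{|\nabla b|^2}\cdot\int_{\Sigma_r}|\nabla b|^2 .
\end{equation*}
Using $|\nabla b^2|^2=4r^2|\nabla b|^2$ on $\Sigma_r$, this yields, at regular values,
\begin{equation*}
 B_2(r)\geq\frac{3}{8r^2}\int_{\Sigma_r}\frac{X^2}{|\nabla b|^2}\geq\frac32\,\frac{r^2A'(r)^2}{A(r)}=\frac32A(r)\,a(r)^2 .
\end{equation*}

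\emph{Step 3: formal computation.} If everything were smooth, Steps 1 and 2 would give $2(rAa)'-2(rA)'\geq\tfrac32Aa^2$. Expanding $(rAa)'=Aa+Aa^2+rAa'$ and $(rA)'=A+Aa$ gives $2rAa'\geq2A-\tfrac12Aa^2$. Dividing by $2rA>0$ gives $a'\geq(1-a^2/4)/r$.

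\emph{Step 4: integrating without smoothness (main obstacle).} Here $a$ is merely continuous, so I would work with measures. Step 1 says the distributional derivative $\mu:=\Phi'$ satisfies
\begin{equation*}
 \mu\geq\bigl((rA)'+\tfrac34Aa^2\bigr)\dd r=\bigl(A+Aa+\tfrac34Aa^2\bigr)\dd r
\end{equation*}
as measures on $[r_0,\infty)$. In particular $\Phi$ is locally BV. Since $rA$ is $C^1$ and positive, $a=\Phi/(rA)$ is locally BV and continuous. The product rule for a continuous BV function times a $C^1$ function then gives
\begin{equation*}
 a'=\frac{\mu}{rA}-\frac{\Phi\,(rA)'}{(rA)^2}\dd r=\frac{\mu}{rA}-\frac{a(1+a)}{r}\dd r .
\end{equation*}
Inserting the lower bound for $\mu$ yields $a'\geq\frac{1}{r}\bigl(1-\tfrac14a^2\bigr)\dd r$ as measures. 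Integrating over $[r_1,r_2]$ is legitimate because $a$ is continuous, so no endpoint atoms occur, and this gives \eqref{eq:riccati}.

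The point needing most care is the justification that the dropped terms ($S_1$, $-2\kappa$, and the excess $B_2-\tfrac32Aa^2$) enter only with a favorable sign in the integrated identity. This is exactly what Proposition~\ref{prop:measure-identity} was set up to provide, since it guarantees that there is no defect measure concentrated at critical levels.
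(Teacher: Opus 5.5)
Your proof is correct, but it handles the integration differently from the paper. The paper returns to three dimensions. It applies the no-defect Gauss--Green formula of Lemma~\ref{lem:positive-defect} to the reweighted field $V=\lambda(b)\nabla W$ with $\lambda(r)=r^{-2}/A(r)$, whose flux through $\Sigma_r$ is exactly $B_1/A=2a+2$. It computes the level-integrated divergence exactly as $(S_1+B_2-2\kappa)/A+2(1-a^2)$, and only then inserts $S_1\geq0$, $\kappa\leq0$ and $a^2\leq\frac23 B_2/A$.

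You instead take Proposition~\ref{prop:measure-identity} as a black box and change the weight in one variable. The flux of $b^{-1}\nabla W$ is $rB_1=2\Phi+2rA$, so your quotient $a=\Phi/(rA)$ is the one-dimensional counterpart of the paper's flux $2a+2$. Your product-rule step then reproduces the paper's density computation \eqref{eq:V-density-exact}. The pointwise ingredients are the same in both routes: the trace-free estimate, the weighted Cauchy--Schwarz inequality, and the Gauss--Bonnet sign.

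What your route gains is that it needs no second application of the semiconvexity/$BV$ lemma on $D$. The Riccati inequality becomes a formal consequence of the exact identity \eqref{eq:exact-CM-with-measure} together with estimates on regular levels. The paper's route instead yields an exact density identity in which the weight is built into the vector field.

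One simplification is available to you. The integrand $S_1+B_2-2\kappa$ in \eqref{eq:exact-CM-with-measure} is locally integrable, so $\Phi$ is in fact locally absolutely continuous, not merely absolutely continuous plus monotone. Hence $a$ is locally absolutely continuous on $[r_0,\infty)$, and you can drop the measure formalism of your Step~4 and simply integrate the almost-everywhere inequality from Step~3.
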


\begin{proof}
On a compact band contained in $\{b\geq r_0\}$, define
\begin{equation*}
 V:=\lambda(b)\nabla W,
 \qquad
 \lambda(r):=\frac{r^{-2}}{A(r)},
 \qquad
 W=|\nabla b^2|.
\end{equation*}
Since $A$ is positive and $C^1$, the weight $\lambda$ is positive and $C^1$. Lemma~\ref{lem:positive-defect} shows that $\operatorname{div}_gV\in L^1_{\mathrm{loc}}$ weakly and gives the exact Gauss--Green formula on regular level bands.

At a regular value $r$, \eqref{eq:normal-W-Hess} gives
\begin{align}
 \int_{\Sigma_r}\langle V,\nu\rangle\dd A_g
 &=\frac{r^{-2}}{A(r)}
 \int_{\Sigma_r}\Hess(b^2)(\nu,\nu)\dd A_g\notag\\
 &=\frac{B_1(r)}{A(r)}
 =2a(r)+2,
 \label{eq:V-flux}
\end{align}
where the last equality follows from \eqref{eq:A-first-variation}.

We now compute the absolutely continuous density exactly. On the regular set,
$$
 \operatorname{div}_gV
 =\lambda(b)\Delta_gW+\lambda'(b)\langle\nabla b,\nabla W\rangle.
$$
At the level $b=r$, use $W=2r|\nabla b|$, \eqref{eq:pointwise-CM}, and \eqref{eq:normal-W-Hess}. The first term gives
\begin{align}
 r\int_{\Sigma_r}\frac{\lambda(r)\Delta_gW}{|\nabla b|}\dd A_g
 &=\frac{2}{A(r)}\int_{\Sigma_r}\frac{\Delta_gW}{W}\dd A_g\notag\\
 &=\frac{S_1(r)+B_2(r)-2\kappa(r)}{A(r)}
 +3\frac{B_1(r)}{A(r)}.
 \label{eq:V-density-first-term}
\end{align}
For the weight derivative, the definition of $a$ gives
\begin{equation*}
 \frac{\lambda'(r)}{\lambda(r)}
 =-\frac{2}{r}-\frac{A'(r)}{A(r)}
 =-\frac{2+a(r)}{r},
 \qquad
 r^3\lambda'(r)=-\frac{2+a(r)}{A(r)}.
\end{equation*}
Therefore
\begin{align}
 r\int_{\Sigma_r}
 \frac{\lambda'(r)\langle\nabla b,\nabla W\rangle}{|\nabla b|}\dd A_g
 &=r\lambda'(r)\int_{\Sigma_r}\partial_\nu W\dd A_g\notag\\
 &=r^3\lambda'(r)B_1(r)
 =-(2+a(r))\frac{B_1(r)}{A(r)}.
 \label{eq:V-density-second-term}
\end{align}
Combining \eqref{eq:V-density-first-term}--\eqref{eq:V-density-second-term} with $B_1/A=2(a+1)$ yields the exact identity
\begin{equation}\label{eq:V-density-exact}
 r\int_{\Sigma_r}
 \frac{\operatorname{div}_gV}{|\nabla b|}\dd A_g
 =\frac{S_1(r)+B_2(r)-2\kappa(r)}{A(r)}
 +2\bigl(1-a(r)^2\bigr)
\end{equation}
for almost every regular $r\geq r_0$.

For completeness, we also record the algebraic estimate relating $a$ and $B_2$. Since \eqref{eq:b-identities} implies $\tr_gB=0$, every trace-free symmetric bilinear form in dimension three satisfies
\begin{equation}\label{eq:tracefree-pointwise}
 |B|^2\geq\frac32 B(\nu,\nu)^2.
\end{equation}
Furthermore,
\begin{align*}
 \int_{\Sigma_r}B(\nu,\nu)\dd A_g
 &=r^2\bigl(B_1(r)-2A(r)\bigr)
 =2r^2a(r)A(r).
\end{align*}
Cauchy--Schwarz, \eqref{eq:tracefree-pointwise}, and
$$
 \int_{\Sigma_r}|\nabla b|^2\dd A_g=r^2A(r)
$$
give
\begin{align*}
 B_2(r)
 &=\int_{\Sigma_r}\frac{|B|^2}{4r^2|\nabla b|^2}\dd A_g\\
 &\geq\frac{3}{8r^2}
 \int_{\Sigma_r}\frac{B(\nu,\nu)^2}{|\nabla b|^2}\dd A_g\\
 &\geq\frac{3}{8r^2}
 \frac{\left(\int_{\Sigma_r}B(\nu,\nu)\dd A_g\right)^2}
 {\int_{\Sigma_r}|\nabla b|^2\dd A_g}
 =\frac32a(r)^2A(r).
\end{align*}
Thus
\begin{equation}\label{eq:a-B2}
 a(r)^2\leq\frac{2}{3}\frac{B_2(r)}{A(r)}.
\end{equation}

Because $R_g\geq0$, one has $S_1(r)\geq0$. Connectedness and genus at least one give $\kappa(r)\leq0$ by Gauss--Bonnet. Hence \eqref{eq:V-density-exact} and \eqref{eq:a-B2} imply
\begin{equation}\label{eq:V-lower}
 r\int_{\Sigma_r}
 \frac{\operatorname{div}_gV}{|\nabla b|}\dd A_g
 \geq2-\frac12a(r)^2
\end{equation}
for almost every regular $r\geq r_0$.

Let first $r_1<r_2$ be regular. Apply the Gauss--Green formula from Lemma~\ref{lem:positive-defect} to $V$ on $\{r_1<b<r_2\}$. Using \eqref{eq:V-flux}, coarea, and \eqref{eq:V-lower}, we obtain
\begin{align*}
 2\bigl(a(r_2)-a(r_1)\bigr)
 &=\int_{r_1}^{r_2}\int_{\Sigma_r}
 \frac{\operatorname{div}_gV}{|\nabla b|}\dd A_g\dd r\\
 &\geq\int_{r_1}^{r_2}\left(2-\frac12a(r)^2\right)\frac{\dd r}{r}.
\end{align*}
Dividing by two proves \eqref{eq:riccati} for regular endpoints. The function $a$ is continuous, regular values are dense, and the right-hand integrand is continuous. Approximation therefore proves \eqref{eq:riccati} for arbitrary endpoints.
\end{proof}

\subsection{Quadratic growth from a uniform genus-two gap}

We shall use the following elementary comparison principle in logarithmic radial time.

\begin{lemma}[Comparison for integral supersolutions]\label{lem:integral-comparison}
Let $F:\R\to\R$ be locally Lipschitz. Suppose that $\alpha:[0,T]\to\R$ is continuous and satisfies
\begin{equation}\label{eq:integral-supersolution}
 \alpha(t_2)-\alpha(t_1)
 \geq\int_{t_1}^{t_2}F(\alpha(t))\dd t
 \qquad(0\leq t_1<t_2\leq T).
\end{equation}
Let $\psi$ solve $\psi'=F(\psi)$ with $\psi(0)\leq\alpha(0)$. Then $\psi(t)\leq\alpha(t)$ for every $t\in[0,T]$ for which $\psi$ exists.
\end{lemma}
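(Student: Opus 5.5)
The plan is a standard first-crossing argument that uses only the integral form of the supersolution inequality and the local Lipschitz continuity of $F$. Let $[0,T']\subset[0,T]$ be an interval on which $\psi$ exists; since $\psi$ is continuous it is bounded there. Set $\phi:=\psi-\alpha$, which is continuous on $[0,T']$ with $\phi(0)\leq0$. We must show $\phi\leq0$ on $[0,T']$. Suppose not, and define $t_0:=\sup\{t\in[0,T']:\phi\leq0\text{ on }[0,t]\}$. By continuity $\phi(t_0)\leq0$, and since $\phi\leq 0$ on $[0,t_0]$, we get $\phi(t_0)=0$ whenever $t_0>0$. If $t_0=0$ we still have $\phi(0)\leq0$. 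In either case there are points $t>t_0$ arbitrarily close to $t_0$ with $\phi(t)>0$.

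Next choose a compact interval $J\subset\R$ containing the ranges of $\psi$ and $\alpha$ on $[0,T']$, and let $\Lambda$ be a Lipschitz constant for $F$ on $J$. For $t_0\leq t_1<t_2\leq T'$, subtract the supersolution inequality \eqref{eq:integral-supersolution} from the identity $\psi(t_2)-\psi(t_1)=\int_{t_1}^{t_2}F(\psi)\dd t$. This gives
$$
 \phi(t_2)-\phi(t_1)\leq\int_{t_1}^{t_2}\bigl(F(\psi)-F(\alpha)\bigr)\dd t\leq\Lambda\int_{t_1}^{t_2}|\phi|\dd t.
$$

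Now take the first $t\in(t_0,T']$ with $\phi(t)>0$, and let $s:=\sup\{u\in[t_0,t]:\phi(u)\leq0\}$. Then $\phi(s)\leq0$, and $\phi>0$ on $(s,t]$. Put $\Phi(u):=\max_{[s,u]}\phi^+$. For $s<u\leq t$, apply the inequality with $t_1=s$ and $t_2=v$ for every $v\in(s,u]$; on $(s,v]$ we have $|\phi|=\phi^+$. This yields $\phi(v)\leq\Lambda(u-s)\Phi(u)$, so $\Phi(u)\leq\Lambda(u-s)\Phi(u)$. Hence $\Phi(u)=0$ once $u-s<1/\Lambda$, which contradicts $\phi>0$ on $(s,u]$.

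This short-time contraction step, which replaces Gronwall's lemma since $\alpha$ need not be differentiable, is the only point that needs care. We apply it with $u=\min(t,s+1/(2\Lambda))$. We therefore conclude $\phi\leq0$ on every interval of existence, i.e.\ $\psi\leq\alpha$.
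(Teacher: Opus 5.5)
Your proof is correct and is essentially the paper's argument: find a point $s$ with $\phi(s)=0$ followed by an interval where $\psi>\alpha$, subtract the supersolution inequality from the integral equation for $\psi$ to get $\phi(v)\le\Lambda\int_s^v\phi$, and conclude $\phi\equiv0$ there. Your short-time contraction with $\Phi(u)=\max_{[s,u]}\phi^+$ just proves by hand the integral Gronwall step the paper cites; that step needs only continuity of $\phi$, not differentiability of $\alpha$. One wording fix: there is no ``first'' $t>t_0$ with $\phi(t)>0$, because such points accumulate at $t_0$, but your argument works verbatim for any such $t$.
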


\begin{proof}
Suppose that the open set $\{t:\psi(t)>\alpha(t)\}$ is nonempty, and let $(s,t_*)$ be one of its connected components. By continuity,
$$
 \psi(s)=\alpha(s),
 \qquad
 \psi>\alpha\quad\text{on }(s,t_*).
$$
Both functions have bounded range on the compact interval $[s,t_*]$, so $F$ is Lipschitz there with some constant $L$. Subtracting \eqref{eq:integral-supersolution} from the integral equation for $\psi$ gives, for $s<t<t_*$,
$$
 0<\psi(t)-\alpha(t)
 \leq L\int_s^t\bigl(\psi(u)-\alpha(u)\bigr)\dd u.
$$
Gr\"onwall's inequality forces $\psi-\alpha\equiv0$ on $[s,t_*)$, a contradiction.
\end{proof}

\begin{proposition}[Uniform quadratic growth]\label{prop:uniform-quadratic}
Let $\beta=b_1(M)$ and assume $R_g\geq0$. Suppose that, for a Dirichlet Green function as above, every regular level $\Sigma_r$ is connected and has genus at least two whenever $r\geq r_*$. Set
\begin{equation*}
 R_0:=2(\beta+2)r_*,
 \qquad
 r_1:=2R_0=4(\beta+2)r_*,
\end{equation*}
and
\begin{equation*}
 c_*:=\frac{2\pi}{r_1^2}\left(\frac34\right)^4
 =\frac{81\pi}{128r_1^2}>0.
\end{equation*}
Then
\begin{equation}\label{eq:quadratic-growth}
 A(r)\geq c_*r^2
 \qquad\text{for all }r\geq r_1.
\end{equation}
The constants $r_1=r_1(\beta,r_*)$ and $c_*=c_*(\beta,r_*)$ depend only on $\beta$ and $r_*$, not on the outer Dirichlet domain.
\end{proposition}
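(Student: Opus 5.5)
The plan is to combine the exact Colding--Minicozzi identity \eqref{eq:one-sided-CM} with a Gauss--Bonnet count to get a first-order differential inequality for $A$ on $[R_0,\infty)$. That inequality gives $a\geq1$ and a lower bound $A(r_1)\geq2\pi$. I then run the Riccati comparison of Proposition~\ref{prop:riccati} and Lemma~\ref{lem:integral-comparison} from $r_1$, in logarithmic time, to upgrade to quadratic growth.

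\emph{Step 1: Euler characteristic budget.} For a regular $s<r_*$, each component of $\Sigma_s$ contributes at most $2\pi\chi\leq4\pi$ to $\kappa(s)$. By Lemma~\ref{lem:green-connectivity} there are at most $\beta+1$ components, so $\kappa(s)\leq4\pi(\beta+1)$. For regular $s\geq r_*$ the level is connected of genus $\geq2$, so $\kappa(s)\leq-4\pi$. Critical values form a null set, so for $r\geq R_0=2(\beta+2)r_*$,
$$
 \int_0^r(-2\kappa)\dd s\geq 8\pi(r-r_*)-8\pi(\beta+1)r_*=8\pi\bigl(r-(\beta+2)r_*\bigr)\geq4\pi r .
$$

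\emph{Step 2: Differential inequality for $A$.} Insert Step~1 into \eqref{eq:one-sided-CM} and drop $B_2\geq0$. This gives $B_1\geq4A+4\pi$ on $[R_0,\infty)$. With \eqref{eq:A-first-variation}, $B_1=2A+2rA'$, so
$$
 rA'\geq A+2\pi \qquad (r\geq R_0).
$$
Since $A\geq0$, this shows $A'>0$, so $A>0$ on $(R_0,\infty)$. It also gives $a=rA'/A\geq1$ there. Writing the inequality as $(A/r)'\geq2\pi/r^2$ and integrating from $R_0$ to $r_1=2R_0$ yields $A(r_1)/r_1\geq2\pi(1/R_0-1/r_1)=\pi/R_0$. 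Hence $A(r_1)\geq2\pi$, a bound independent of the outer domain.

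\emph{Step 3: Riccati comparison.} On $[r_1,\infty)$ the hypotheses of Proposition~\ref{prop:riccati} hold: $A>0$ and the levels are connected of genus $\geq1$. Setting $t=\log(r/r_1)$ and $\alpha(t)=a(r_1e^t)$, \eqref{eq:riccati} is exactly the integral supersolution condition \eqref{eq:integral-supersolution} with $F(x)=1-x^2/4$. Take $\psi(t)=2\tanh(t/2+c_0)$ with $\tanh c_0=1/2$; it solves $\psi'=F(\psi)$, exists for all $t\geq0$, and $\psi(0)=1\leq\alpha(0)$. Lemma~\ref{lem:integral-comparison} then gives $a\geq\psi$. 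Integrating $(\log A)'=a/r$ gives
$$
 \log\frac{A(r)}{A(r_1)}\geq\int_0^t\psi=4\log\frac{\cosh(t/2+c_0)}{\cosh c_0}.
$$
Now use $\cosh x\geq e^x/2$, $e^{c_0}=\sqrt3$ and $\cosh c_0=2/\sqrt3$. The right side exponentiates to at least $\tfrac{81}{256}e^{2t}$, so
$$
 A(r)\geq 2\pi\cdot\tfrac{81}{256}\,\frac{r^2}{r_1^2}=c_*r^2 .
$$

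The main point needing care is Step~1. One must account for the small, possibly disconnected levels below $r_*$ using only the component bound \eqref{eq:level-component-bound}, which is exactly what fixes the factor $\beta+2$ in $R_0$. All constants depend only on $\beta$ and $r_*$, so they are independent of the outer Dirichlet domain.
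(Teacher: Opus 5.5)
Your proposal is correct and follows the paper's proof essentially step for step. You insert the same Gauss--Bonnet budget into \eqref{eq:one-sided-CM} to get $rA'\geq A+2\pi$ on $[R_0,\infty)$, which yields $A(r_1)\geq 2\pi$ and $a(r_1)\geq 1$, and then run the same Riccati comparison; your $2\tanh(t/2+c_0)$ is exactly the paper's $\varphi(r)=2(3r-r_1)/(3r+r_1)$ in logarithmic time, and your bound $\cosh x\geq e^x/2$ is its $(3r+r_1)/(4r)\geq 3/4$.
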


\begin{proof}
For a regular level with components of genera $g_1,\ldots,g_{N(r)}$, Gauss--Bonnet and \eqref{eq:level-component-bound} give
\begin{equation}\label{eq:kappa-upper-small}
 \kappa(r)=4\pi\sum_{j=1}^{N(r)}(1-g_j)
 \leq4\pi N(r)
 \leq4\pi(\beta+1).
\end{equation}
For $r\geq r_*$, connectedness and genus at least two give
\begin{equation}\label{eq:kappa-gap}
 \kappa(r)\leq-4\pi.
\end{equation}
These estimates hold for almost every parameter, which is sufficient under our critical-value convention. Moreover, $B_2-2\kappa$ is locally integrable by Proposition~\ref{prop:measure-identity}; the lower bounds above control the negative part of $-2\kappa$. Thus the following integrated comparison involves no $\infty-\infty$ ambiguity. Because $R_g\geq0$ and $B_2\geq0$, inserting \eqref{eq:kappa-upper-small}--\eqref{eq:kappa-gap} into \eqref{eq:one-sided-CM} gives, for $r\geq r_*$,
\begin{align}
 rB_1(r)
 &\geq4rA(r)-8\pi(\beta+1)r_*+8\pi(r-r_*)\notag\\
 &=4rA(r)+8\pi r-8\pi(\beta+2)r_*.
 \label{eq:genus-gap-B1}
\end{align}
Using $B_1=2(rA)'$ and dividing \eqref{eq:genus-gap-B1} by $2r$ yields
\begin{equation*}
 rA'(r)
 \geq A(r)+4\pi-\frac{4\pi(\beta+2)r_*}{r}.
\end{equation*}
For $r\geq R_0$, the last term is at most $2\pi$, so
\begin{equation}\label{eq:A-plus-2pi}
 rA'(r)\geq A(r)+2\pi.
\end{equation}
Equivalently,
$$
 \left(\frac{A(r)}{r}\right)'\geq\frac{2\pi}{r^2}.
$$
Integrating from $R_0$ to $2R_0=r_1$ and using $A\geq0$, we obtain
\begin{equation}\label{eq:A-r1}
 A(r_1)\geq2\pi.
\end{equation}
In addition, integrating \eqref{eq:A-plus-2pi} from $R_0$ to any $r>R_0$ shows that $A(r)>0$; in particular, $A$ is positive on $[r_1,\infty)$. Equation \eqref{eq:A-plus-2pi} also gives
\begin{equation}\label{eq:a-r1}
 a(r_1)=\frac{r_1A'(r_1)}{A(r_1)}\geq1.
\end{equation}

Since all regular levels beyond $r_1$ have genus at least two, Proposition~\ref{prop:riccati} applies. Set logarithmic time $t=\log(r/r_1)$ and
$$
 \alpha(t):=a(r_1e^t).
$$
Then \eqref{eq:riccati} says precisely that $\alpha$ is an integral supersolution of
$$
 \alpha'=1-\frac{\alpha^2}{4}.
$$
The solution with initial value $1$ is, in the original radial variable,
\begin{equation*}
 \varphi(r):=2\frac{3r-r_1}{3r+r_1};
\end{equation*}
it satisfies
$$
 r\varphi'(r)=1-\frac{\varphi(r)^2}{4},
 \qquad
 \varphi(r_1)=1.
$$
Since $a(r_1)\geq1$ by \eqref{eq:a-r1}, Lemma~\ref{lem:integral-comparison} gives
\begin{equation}\label{eq:a-phi}
 a(r)\geq\varphi(r)
 \qquad(r\geq r_1).
\end{equation}
Integrating \eqref{eq:a-phi} gives
$$
 \log\frac{A(r)}{A(r_1)}
 \geq\int_{r_1}^r\varphi(s)\frac{\dd s}{s}
 =2\log\frac{r}{r_1}-4\log\frac{4r}{3r+r_1}.
$$
Therefore
\begin{equation}\label{eq:A-explicit-lower}
 A(r)
 \geq A(r_1)\left(\frac{r}{r_1}\right)^2
 \left(\frac{3r+r_1}{4r}\right)^4.
\end{equation}
Since $(3r+r_1)/(4r)\geq3/4$ and \eqref{eq:A-r1} holds, \eqref{eq:A-explicit-lower} implies \eqref{eq:quadratic-growth}.
\end{proof}

\subsection{The boundary limit of the level-set quantity}

\begin{proposition}[Boundary flux asymptotics]\label{prop:boundary-flux}
Let $D\Subset M$ be smooth and let $G=G_D(p,\cdot)$. Set
$$
 q:=-\partial_\nu G>0
 \qquad\text{on }\partial D.
$$
Then
\begin{equation}\label{eq:boundary-flux-limit}
 \lim_{r\to\infty}\frac{A(r)}{r^2}
 =\int_{\partial D}q^2\dd A_g.
\end{equation}
\end{proposition}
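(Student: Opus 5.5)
Write $A(r)/r^2$ in terms of $G$ and then use boundary regularity of $G$ near $\partial D$. Since $b=G^{-1}$, we have $\nabla b=-G^{-2}\nabla G$. On the level $\Sigma_r=\{G=r^{-1}\}$ this gives $|\nabla b|^2=r^4|\nabla G|^2$, and hence
$$
 \frac{A(r)}{r^2}=r^{-4}\int_{\Sigma_r}|\nabla b|^2\dd A_g=\int_{\{G=r^{-1}\}}|\nabla G|^2\dd A_g .
$$
The claim is therefore that $\int_{\{G=t\}}|\nabla G|^2\dd A_g\to\int_{\partial D}|\nabla G|^2\dd A_g$ as $t\downarrow0$. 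On $\partial D$ we have $|\nabla G|=q$, because $G$ vanishes there and $\nabla G=-q\nu$.

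The steps are as follows.

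\textbf{Step 1: boundary regularity.} Since $\partial D$ is smooth and compact and $G$ is harmonic in a neighborhood of $\partial D$ inside $D$ (the pole $p$ is at positive distance), Schauder theory up to the boundary gives $G\in C^\infty(\overline D\setminus\{p\})$, or at least $C^{2,\alpha}$ on a closed collar $\mathcal C=\{x\in\overline D:d_g(x,\partial D)\le\delta\}$. The Hopf lemma gives $q>0$ on $\partial D$. By compactness and continuity, $|\nabla G|\ge q_{\min}/2>0$ on $\mathcal C$ after shrinking $\delta$.

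\textbf{Step 2: the small levels are graphs over $\partial D$.} Use Fermi coordinates $(y,s)\in\partial D\times[0,\delta]$, where $s$ is the inward distance. Then $\partial_sG(y,0)=q(y)>0$, and after shrinking $\delta$ we get $\partial_sG\ge c>0$ on the collar. Also $G\ge\eta>0$ on $\{s=\delta\}$ by compactness and positivity of $G$ in $D$. For $t<\eta$, the implicit function theorem shows that $\{G=t\}\cap\mathcal C$ is the smooth graph $s=h_t(y)$, and $h_t\to0$ in $C^1(\partial D)$ as $t\to0$.

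We also need $\{G=t\}\subset\mathcal C$ for small $t$. This holds because $G$ has a positive minimum on the compact set $\overline D\setminus(\mathcal C\cup B(p,\epsilon))$ and is large near $p$. In particular every level $t<t_0$ is regular, so for large $r$ the regular-value definition of $A$ applies directly.

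\textbf{Step 3: pass to the limit.} Pull the integral back to $\partial D$ via $y\mapsto(y,h_t(y))$. The integrand $|\nabla G|^2(y,h_t(y))$ converges uniformly to $q(y)^2$, and the area Jacobian converges uniformly to $1$ because $h_t\to0$ in $C^1$. Dominated (indeed uniform) convergence then gives the limit. An alternative route is the divergence theorem applied to $|\nabla G|\,\nabla G$ on the thin band $\{0<G<t\}$, but the graph argument is cleaner.

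The only mildly delicate point is Step 2: arranging that $\{G=t\}$ lies entirely in the collar and is a $C^1$-small graph there. This rests on uniform positivity of $\partial_sG$ near $\partial D$, which comes from $C^1$ regularity up to the boundary together with the Hopf lemma.
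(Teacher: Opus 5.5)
Your proof is correct and follows essentially the same route as the paper. Both rewrite $A(r)/r^2$ as $\int_{\{G=r^{-1}\}}|\nabla G|^2\,dA_g$, use boundary regularity and the Hopf lemma in Fermi coordinates to confine small levels to a collar (via a positive lower bound for $G$ off the collar), realize them there as normal graphs over $\partial D$, and pass to the limit. The only difference is cosmetic: the paper records explicit $O(\varepsilon)$ expansions such as $s_\varepsilon=\varepsilon/q+O(\varepsilon^2)$, where you use uniform $C^1$ convergence of the graphs $h_t\to0$.
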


\begin{proof}
On $\{b=r\}=\{G=r^{-1}\}$,
$$
 \nabla b=-G^{-2}\nabla G=-r^2\nabla G.
$$
Consequently,
\begin{equation}\label{eq:A-G-level}
 \frac{A(r)}{r^2}
 =\int_{\{G=r^{-1}\}}|\nabla G|^2\dd A_g.
\end{equation}

In a sufficiently small collar of $\partial D$, use inward boundary
normal coordinates $(y,s)$, where $y\in\partial D$ and
$s=d_g(\,\cdot\,,\partial D)$. Boundary regularity and the Hopf lemma give, uniformly on the compact boundary,
\begin{align}
 G(y,s)&=q(y)s+O(s^2),\notag\\
 \partial_sG(y,s)&=q(y)+O(s),
 \qquad
 \nabla_{\partial D}G(y,s)=s\nabla_{\partial D}q(y)+O(s^2).
 \label{eq:grad-G-Fermi}
\end{align}
Choose the collar so that the compact set outside it has a positive minimum of $G$. For every sufficiently small $\eps>0$, the entire level $\{G=\eps\}$ therefore lies in the collar. Since $\min_{\partial D}q>0$, the implicit function theorem shows that this level is a normal graph
\begin{equation}\label{eq:level-graph}
 s=s_\eps(y)=\frac{\eps}{q(y)}+O(\eps^2).
\end{equation}
Equations \eqref{eq:grad-G-Fermi}--\eqref{eq:level-graph} imply
$$
 |\nabla G|^2\big|_{\{G=\eps\}}=q(y)^2+O(\eps),
 \qquad
 \dd A_\eps=(1+O(\eps))\dd A_{\partial D}.
$$
It follows that
$$
 \lim_{\eps\downarrow0}\int_{\{G=\eps\}}|\nabla G|^2\dd A_g
 =\int_{\partial D}q^2\dd A_g.
$$
Taking $\eps=r^{-1}$ in \eqref{eq:A-G-level} proves \eqref{eq:boundary-flux-limit}.
\end{proof}

Proposition~\ref{prop:uniform-quadratic} supplies a domain-independent lower bound for $A(r)/r^2$, and Proposition~\ref{prop:boundary-flux} identifies its boundary limit. Their combination is the uniform estimate required by the outer Hadamard argument.

\section{Proof of the low-genus separator theorem}\label{sec:low-genus-proof}

\begin{proof}[Proof of Theorem~\ref{thm:low-genus}]
Suppose the conclusion is false. Then there exists a compact set $K_0\Subset M$ such that every admissible separator $U$ containing $K_0$ satisfies
\begin{equation}\label{eq:genus-core-assumption}
 \genus(\partial U)\geq2.
\end{equation}
Let $S$ be the compact set in Lemma~\ref{lem:compact-detector}. Enlarge $K_0$ to a compact set $K$ containing $S$ and $p$; property \eqref{eq:genus-core-assumption} is preserved.

Choose the bounded-gradient proper Morse function $\rho$ from Proposition~\ref{prop:morse-function} and its filled domains $D_\tau$. By \eqref{eq:filled-exhaustion}, there is a regular value $\tau_0$ such that
\begin{equation*}
 K\Subset D_{\tau_0}.
\end{equation*}
Let $G_\tau=G_{D_\tau}(p,\cdot)$ for regular $\tau\geq\tau_0$, with the convention $G_\tau(p)=+\infty$. Choose $\delta>0$ with $\overline{B_g(p,2\delta)}\subset D_{\tau_0}$. By the pole expansion, after decreasing $\delta$ there is $c_0>0$ such that
$$
 G_{\tau_0}\geq c_0
 \qquad\text{on }B_g(p,2\delta)\setminus\{p\}.
$$
If $K\setminus B_g(p,\delta)$ is nonempty, it is a compact subset of $D_{\tau_0}\setminus\{p\}$, and we set
$$
 c_1:=\min_{K\setminus B_g(p,\delta)}G_{\tau_0}>0.
$$
If it is empty, set $c_1:=c_0$. Choose $r_*>0$ so that $r_*^{-1}<\min\{c_0,c_1\}$. Then
\begin{equation*}
 G_{\tau_0}>r_*^{-1}
 \qquad\text{on }K\setminus\{p\}.
\end{equation*}
Domain monotonicity and $D_{\tau_0}\subset D_\tau$ give
\begin{equation*}
 K\subset U_{\tau,r}:=\{p\}\cup\{G_\tau>r^{-1}\}
\end{equation*}
for every regular $\tau\geq\tau_0$ and every $r\geq r_*$.

Fix such a parameter $\tau$. By Lemma~\ref{lem:green-connectivity}, $U_{\tau,r}$ and its complement are connected for every regular $r$. Because $S\subset K\subset U_{\tau,r}$ when $r\geq r_*$, Corollary~\ref{cor:large-level-connected} gives that $\partial U_{\tau,r}$ is connected. It is therefore an admissible separator containing $K_0$, and \eqref{eq:genus-core-assumption} yields
\begin{equation*}
 \genus(\partial U_{\tau,r})\geq2
 \qquad(r\geq r_*,\ r\text{ regular}).
\end{equation*}
For smaller regular levels, Lemmas~\ref{lem:green-connectivity} and~\ref{lem:boundary-components} give the uniform component bound
\begin{equation*}
 \#\pi_0(\partial U_{\tau,r})\leq b_1(M)+1.
\end{equation*}

The constants $r_*$ and $\beta=b_1(M)$ are independent of $\tau$. Applying Proposition~\ref{prop:uniform-quadratic} to $G_\tau$ therefore produces a constant $c_*=c_*(\beta,r_*)>0$, independent of $\tau$, such that
\begin{equation*}
 A_\tau(r)\geq c_*r^2
 \qquad\text{for all }r\geq r_1.
\end{equation*}
By Proposition~\ref{prop:boundary-flux},
\begin{equation}\label{eq:q-tau-lower}
 \int_{\partial D_\tau}q_\tau^2\dd A_g
 =\lim_{r\to\infty}\frac{A_\tau(r)}{r^2}
 \geq c_*.
\end{equation}

Let $a<b$ be regular parameters with $\tau_0\leq a<b$. Inserting \eqref{eq:q-tau-lower} into \eqref{eq:hadamard-lower-integrated} gives
\begin{equation}\label{eq:robin-linear-growth}
 m_{D_b}(p)-m_{D_a}(p)
 \geq\frac{c_*}{4\pi L}(b-a).
\end{equation}
Critical values between $a$ and $b$ contribute only the nonnegative jumps already incorporated in Proposition~\ref{prop:critical-jumps}. Letting $b\to\infty$ through regular values in \eqref{eq:robin-linear-growth} forces
$$
 m_{D_b}(p)\longrightarrow+\infty.
$$
This contradicts the global upper bound
$$
 m_{D_b}(p)\leq m_M(p)<\infty
$$
from \eqref{eq:global-robin-bound}. Hence no compact set $K_0$ satisfying \eqref{eq:genus-core-assumption} exists. Equivalently, every compact subset of $M$ is contained in an admissible separator of genus at most one.
\end{proof}

\begin{proof}[Proof of Corollary~\ref{cor:nonparabolic-contractible}]
A contractible manifold is orientable, has $b_1=0$, and has one end; see \cite[Lemmas 2.1--2.2]{YanZhu}. Theorem~\ref{thm:low-genus} therefore gives property $\mathrm{(LG)}$, and Proposition~\ref{prop:topological-reduction}\textup{(i)} yields $M\cong_{\mathrm{diff}}\R^3$.
\end{proof}

\begin{proof}[Proof of Corollary~\ref{cor:nonparabolic-handlebody}]
The manifold $M_\gamma$ is orientable, one-ended, and has $b_1(M_\gamma)=\gamma<\infty$. Theorem~\ref{thm:low-genus} gives property $\mathrm{(LG)}$, and Proposition~\ref{prop:topological-reduction}\textup{(ii)} gives $\gamma\leq1$.
\end{proof}

\section{Metric replacement}\label{sec:metric-replacement}

This section proves Theorem~\ref{thm:metric-replacement}. The construction is deliberately separated into two parts. Kazdan's deformation creates strict scalar-curvature positivity while preserving completeness by uniform equivalence. The Evans-potential deformation then creates nonparabolicity while retaining strict positivity.

\subsection{Deformation from nonnegative to positive scalar curvature}

We first record the complete-manifold deformation theorem that will be used to create strict scalar-curvature positivity. The uniform equivalence in the conclusion is important because it preserves completeness.

\begin{lemma}[Kazdan's global supersolution criterion]\label{lem:kazdan-supersolution}
Let $(M^n,h)$ be complete, with $n\geq3$, and set
$$
 c_n:=\frac{4(n-1)}{n-2},
 \qquad
 \mathcal L_h:=-c_n\Delta_h+R_h.
$$
Suppose that there is a smooth bounded domain $\Omega\Subset M$ such that
$$
 \spt(R_h^-)\Subset\Omega,
 \qquad
 R_h^-:=\max\{-R_h,0\},
$$
and the principal Neumann eigenvalue of $\mathcal L_h$ on $\Omega$ is positive. Then there are a smooth function $w$ and constants $0<c_0\leq C_0<\infty$ such that
\begin{equation*}
 c_0\leq w\leq C_0,
 \qquad
 \mathcal L_hw>0
 \quad\text{on }M.
\end{equation*}
\end{lemma}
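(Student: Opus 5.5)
The plan is to build $w$ by gluing a positive Neumann eigenfunction on $\Omega$ to the constant $1$ outside, and then to correct the mixed region using the strict positivity of $R_h$ away from $\spt(R_h^-)$. I will first reduce to the case where $R_h\geq\eta>0$ outside a compact set. This needs care: $R_h^-$ is compactly supported, but $R_h$ may vanish somewhere or decay at infinity. The criterion as used in Kazdan's Theorem A should be read with a strict-positivity clause, so I will interpret the hypothesis this way. Otherwise the constant $1$ is only a supersolution, $\mathcal L_h1=R_h\geq0$ off $\Omega$, with equality possible, and the plan is then to obtain strict positivity by adding a small perturbation.

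\textbf{Step 1: local positive supersolution.} Let $\lambda_1>0$ be the principal Neumann eigenvalue of $\mathcal L_h$ on $\Omega$, and let $\phi>0$ on $\overline\Omega$ be its eigenfunction, smooth up to the boundary and normalized so that $\partial_\nu\phi=0$. Then $\mathcal L_h\phi=\lambda_1\phi>0$ on $\overline\Omega$. The Neumann condition is what makes gluing possible: near $\partial\Omega$, where $R_h\geq0$ because $\spt(R_h^-)\Subset\Omega$, we interpolate. Choose a slightly smaller domain $\Omega'$ with $\spt R_h^-\Subset\Omega'\Subset\Omega$. By continuity of eigenvalues under domain perturbation, the Neumann eigenvalue on $\Omega'$ is still positive; alternatively, take $\phi$ on $\Omega$ and extend it by reflection across $\partial\Omega$ in a collar.

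\textbf{Step 2: gluing.} Set $w=\chi\phi+(1-\chi)\,c$, where $\chi$ is a cutoff equal to $1$ on $\Omega'$ and supported in $\Omega$, and $c>0$ is a constant. On the transition region $T=\Omega\setminus\Omega'$ we have $R_h\geq0$, but the derivative terms $\nabla\chi\cdot\nabla\phi$ and $\Delta\chi(\phi-c)$ may be negative. To control them I choose $c$ and $\chi$ so that $\phi-c$ is small on $T$, for instance by taking $\phi$ nearly constant near $\partial\Omega$ using the Neumann condition and a thin collar. I expect this to be the main obstacle, since a thin collar makes $\Delta\chi$ large. The better route is a variational one: minimize $\int(c_n|\nabla u|^2+R_hu^2)$ with the constraint imposed only on $\Omega$. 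Equivalently, solve $\mathcal L_hu=f>0$ on a large ball $B_k\supset\Omega$ with Neumann data, using coercivity that follows from $\lambda_1(\Omega)>0$ together with $R_h\geq0$ outside. Then take $u>0$ via the maximum principle and Harnack.

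\textbf{Step 3: global extension and bounds.} Given a uniform positive lower bound for $R_h$ outside a compact set $K\supset\overline\Omega$, solve the problem on exhausting balls $B_k$ with $f=\min(1,R_h^+)$ supported appropriately. Uniform coercivity gives solutions $u_k$ that are uniformly bounded above, by comparison with constants outside $K$ since there $\mathcal L_h C=R_hC\geq f$ for $C\geq1$. They are also bounded below: by Harnack on $K$, and outside $K$ by comparison with the constant $\min_{\partial K}u_k$, using $R_h\geq0$. Passing to the limit by local elliptic estimates yields a smooth $w$ with $c_0\leq w\leq C_0$ and $\mathcal L_hw=f>0$. The inequality $f>0$ everywhere, rather than merely $f\geq0$, comes from adding $\epsilon\min(1,\cdot)$ of a positive function on $K$. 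This is absorbed by coercivity, because $\lambda_1(\Omega)>0$ allows the perturbed equation to remain solvable.
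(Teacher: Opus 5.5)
There is a genuine gap. The paper does not construct $w$ by hand. It applies Kazdan's Theorem~A to the Schr\"odinger operator $-\Delta_h+c_n^{-1}R_h=c_n^{-1}\mathcal L_h$, whose Neumann eigenvalue on $\Omega$ has the same sign as that of $\mathcal L_h$; the operator form is recorded by Lee--Lesourd--Unger. You instead try to reprove Kazdan's theorem, but at the decisive point you strengthen the hypothesis to a uniform positive lower bound for $R_h$ outside a compact set. The lemma has no such clause, and the paper's application cannot supply one. In the scalar-flat case of Proposition~\ref{prop:kazdan-deformation}, $R_{g_0}=0$ on the whole complement of the support of the cutoff. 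In the other case, $R_g\geq0$ may vanish on a noncompact set. Where $R_h=0$, the condition $\mathcal L_hw>0$ says that $w$ is strictly superharmonic. Producing a function that is strictly superharmonic on a noncompact region while staying between two positive constants is precisely the content of the lemma. Your remark about ``adding a small perturbation'' does not address it.

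Concretely, Step~3 fails without strict positivity at infinity. Constants are not supersolutions there, since $\mathcal L_hC=R_hC=0<f$, so comparison with constants gives no upper bound. Worse, integrating your Neumann problem on $B_k$ gives
$$
 \int_{B_k}R_hu_k\dd V_h=\int_{B_k}f\dd V_h .
$$
If $R_h$ is compactly supported and $f>0$ is not integrable (for instance $f\geq\epsilon>0$ outside $K$), then $\sup_Ku_k\to\infty$. The positive right-hand side therefore has to be chosen with decay adapted to the geometry at infinity. The uniform two-sided bounds must also come from a mechanism other than comparison with constants. Kazdan's theorem provides both, and your outline omits them. The coercivity you invoke from $\lambda_1(\Omega)>0$ and $R_h\geq0$ holds on each fixed $B_k$, but with constants that degenerate as $k\to\infty$.

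Two smaller points. For $-c_n\Delta u+R_hu=f$ with $R_h>0$, an interior minimum only yields $u\geq f/R_h$ at that point. Your lower bound ``by comparison with $\min_{\partial K}u_k$'' therefore needs an additional argument. Also, the reflection and gluing in Steps~1--2 do not produce a supersolution, as you yourself note.
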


\begin{proof}
Apply Kazdan's Theorem \cite[Theorem A]{Kazdan} to the Schr\"odinger operator
$$
 -\Delta_h+c_n^{-1}R_h.
$$
The negative part of its potential is supported in $\Omega$, and positivity of its principal Neumann eigenvalue is equivalent to positivity of the principal Neumann eigenvalue of $\mathcal L_h$. Kazdan's conclusion gives a positive solution bounded uniformly above and below and satisfying the strict supersolution inequality. This operator formulation is also recorded explicitly in \cite[Proposition A.1]{LeeLesourdUnger}.
\end{proof}

\begin{proposition}[Kazdan deformation]\label{prop:kazdan-deformation}
Let $(M^n,g)$ be complete, with $n\geq3$, and suppose that
$$
 R_g\geq0,
 \qquad
 \Ric_g\not\equiv0.
$$
Then there is a complete Riemannian metric $g_+$ on $M$ such that
$
 R_{g_+}>0
$
. Moreover, $g_+$ is uniformly equivalent to $g$: there are constants $0<c\leq C<\infty$ such that
$$
 cg\leq g_+\leq Cg.
$$
\end{proposition}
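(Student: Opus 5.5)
The plan is a classical two-stage construction. First we perturb $g$ by a compactly supported Ricci-flow-type (or first-order) variation so that scalar curvature becomes positive somewhere, keeping it nonnegative elsewhere up to a controlled compactly supported negative part. Then we apply Lemma~\ref{lem:kazdan-supersolution} and conformally rescale by $w^{4/(n-2)}$. Since $c_0\leq w\leq C_0$, the conformal metric is uniformly equivalent to the perturbed metric, hence to $g$, and is complete.

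\textbf{Step 1: local perturbation.} Choose a point $x_0$ with $\Ric_g(x_0)\neq0$ and a cutoff $\eta\in C_c^\infty(B)$ on a small ball $B$ around $x_0$, with $\eta(x_0)>0$. Set $h_t:=g-t\eta\Ric_g$. The first variation of scalar curvature is
$$
 \frac{d}{dt}\Big|_{t=0}R_{h_t}
 =\Delta_g(\eta R_g)-\operatorname{div}\operatorname{div}(\eta\Ric_g)+\eta|\Ric_g|^2 .
$$
By the contracted Bianchi identity, the divergence terms combine into expressions involving $\nabla\eta$ and $\nabla^2\eta$ and derivatives of $R_g$. These need not be nonnegative, so we do not expect pointwise positivity everywhere. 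This is exactly why the Kazdan criterion allows $R^-$ to be compactly supported.

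\textbf{Step 2: choosing the domain.} For $t$ small, $h:=h_t$ is a metric with $h=g$ outside $B$, and $R_h\geq0$ outside $B$. Moreover $\spt(R_h^-)\subset B$, and $R_h$ is strictly positive near $x_0$. Take $\Omega\Supset B$ smooth and bounded. We must show the principal Neumann eigenvalue
$$
 \lambda_1=\inf_{u\in H^1(\Omega)}\frac{\int_\Omega c_n|\nabla u|^2+R_hu^2}{\int_\Omega u^2}
$$
is positive. For $t=0$ we have $\lambda_1(g)\geq0$, with equality only for constants, and then $\int_\Omega R_g=0$ with $R_g\geq0$. The cleanest route is to use that for constant test functions the eigenvalue is governed by $\int_\Omega R_h\,dV_h$. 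This integral has $t$-derivative
$$
 \int\eta|\Ric_g|^2-\tfrac12\eta R_g^2 ,
$$
since the divergence terms integrate to zero, the volume-variation term is $-\tfrac12\eta R_g\cdot R_g$, and $\Delta(\eta R)$ integrates to $0$.

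\textbf{Main obstacle: the eigenvalue.} This step is expected to be the main difficulty. The integral above may fail to be positive when $\Ric_g$ is pure trace (since $|\Ric|^2\geq R^2/n$, we get $|\Ric|^2-\tfrac12R^2\geq(1/n-1/2)R^2$, which can be negative). The case split is:
\begin{itemize}
\item If $R_g(x_0)>0$ somewhere, take $h=g$ directly. Then $R_h\geq0$ with $R_h\not\equiv0$ on $\Omega$, so the Neumann quotient is strictly positive: a minimizer with zero energy would be constant, forcing $\int R_h=0$.
\item If $R_g\equiv0$ but $\Ric_g\not\equiv0$, the derivative above is $\int\eta|\Ric_g|^2>0$. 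A perturbation argument for the simple principal eigenvalue then gives $\lambda_1(h_t)>0$ for small $t>0$: the eigenvalue derivative equals the derivative of the Rayleigh quotient at the constant eigenfunction, which is this integral divided by the volume.
\end{itemize}
Lemma~\ref{lem:kazdan-supersolution} then yields $w$. The metric $g_+:=w^{4/(n-2)}h$ has
$$
 R_{g_+}=w^{-(n+2)/(n-2)}\mathcal L_hw>0 ,
$$
and it is uniformly equivalent to $h$, which coincides with $g$ off a compact set. Hence $g_+$ is uniformly equivalent to $g$ and complete.
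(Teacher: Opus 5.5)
Your proposal is correct and follows the paper's proof essentially step for step. It uses the same case split: if $R_g\not\equiv0$, take $h=g$ and note the Neumann quotient on a connected $\Omega$ containing a point with $R_g>0$ is strictly positive; if $R_g\equiv0$, perturb by $g-t\eta\Ric_g$ and use simple-eigenvalue perturbation theory to get the eigenvalue derivative $\Vol_g(\Omega)^{-1}\int_\Omega\eta|\Ric_g|^2\dd V_g>0$. Both cases then finish with Kazdan's bounded supersolution and the conformal change $w^{4/(n-2)}h$.

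Your remarks about constant test functions and pointwise positivity of $R_h$ near $x_0$ are not needed, and constants alone give only an upper bound on $\lambda_1$. The rigorous content therefore rests, as in the paper, on the Kato-type differentiability of the simple Neumann eigenvalue on a connected $\Omega$.
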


\begin{proof}
Set
$$
 c_n:=\frac{4(n-1)}{n-2},
 \qquad
 \mathcal L_h:=-c_n\Delta_h+R_h.
$$
For a smooth connected precompact domain $\Omega\Subset M$, let $\mu_1(\Omega,h)$ be the first Neumann eigenvalue of $\mathcal L_h$ on $\Omega$:
\begin{equation}\label{eq:neumann-conformal-eigenvalue}
 \mu_1(\Omega,h)
 =\inf_{0\neq v\in H^1(\Omega)}
 \frac{\displaystyle\int_\Omega
 \bigl(c_n|\nabla v|_h^2+R_hv^2\bigr)\dd V_h}
 {\displaystyle\int_\Omega v^2\dd V_h}.
\end{equation}

Suppose first that $R_g$ is not identically zero. Since $R_g\geq0$, one can choose $\Omega$ so that $R_g>0$ somewhere in $\Omega$. Formula \eqref{eq:neumann-conformal-eigenvalue} gives $\mu_1(\Omega,g)\geq0$. Equality is impossible: a first eigenfunction for the zero eigenvalue would make both nonnegative terms in the numerator vanish, so it would be a nonzero constant and would force $R_g\equiv0$ on $\Omega$. Hence
$$
 \mu_1(\Omega,g)>0.
$$

It remains to consider the scalar-flat case $R_g\equiv0$. Choose a point at which $\Ric_g\neq0$, a smooth precompact domain $\Omega$ containing that point, and a nonnegative cutoff $\eta\in C_c^\infty(\Omega)$ such that
$$
 \int_\Omega\eta|\Ric_g|^2\dd V_g>0.
$$
For small $t\geq0$, define
$$
 g_t:=g-t\eta\Ric_g.
$$
The perturbation is supported in $\Omega$, so $g_t=g$ near $\partial\Omega$ and outside a compact set, and $g_t$ is uniformly equivalent to $g$ for small $t$. Since $\Omega$ is connected, the zero Neumann eigenvalue at $t=0$ is simple, with normalized eigenfunction $v_0=\Vol_g(\Omega)^{-1/2}$. Because $g_t=g$ near $\partial\Omega$, the Neumann realizations are associated with a selfadjoint holomorphic family of closed forms of type \textup{(a)} on the common domain $H^1(\Omega)$. By \cite[Chapter VII, Theorem 4.2 and Remark 4.22]{Kato}, the simple eigenvalue and a normalized eigenfunction depend holomorphically on $t$; the first-variation formula used below is \cite[Chapter VII, equation (4.56)]{Kato}. Writing the numerator and denominator of \eqref{eq:neumann-conformal-eigenvalue} as $Q_t$ and $N_t$, respectively, the normalized eigenfunction $v_0=\Vol_g(\Omega)^{-1/2}$ satisfies
$$
 \mu_1'(0)=Q_0'(v_0)-\mu_1(0)N_0'(v_0)=Q_0'(v_0).
$$
Since $R_g=0$ and $\nabla v_0=0$, the metric variation of the gradient term vanishes, while the volume-form variation is multiplied by $R_g$. Thus
$$
 \left.\frac{d}{dt}\right|_{t=0}\mu_1(\Omega,g_t)
 =\frac{1}{\Vol_g(\Omega)}\int_\Omega DR_g(h)\dd V_g,
 \qquad h=-\eta\Ric_g.
$$
The linearization
$$
 DR_g(h)=-\Delta_g(\tr_g h)+\operatorname{div}_g\operatorname{div}_g h
 -\langle\Ric_g,h\rangle_g
$$
and $\spt h\Subset\Omega$ make the divergence terms integrate to zero. Hence
\begin{equation*}
 \left.\frac{d}{dt}\right|_{t=0}\mu_1(\Omega,g_t)
 =\frac{1}{\Vol_g(\Omega)}
 \int_\Omega\eta|\Ric_g|^2\dd V_g>0.
\end{equation*}
Thus $\mu_1(\Omega,g_{t_0})>0$ for some sufficiently small $t_0>0$.

In either case we have obtained a complete metric $g_0$, uniformly equivalent to $g$, and a smooth precompact domain $\Omega$ such that
$$
 \mu_1(\Omega,g_0)>0.
$$
In the first case $g_0=g$ and $R_{g_0}\geq0$ on all of $M$. In the scalar-flat case, the cutoff satisfies $\spt\eta\Subset\Omega$, and $g_{t_0}=g$ on a neighborhood of $M\setminus\spt\eta$. Since scalar curvature is a local expression in the metric and its first two derivatives,
$$
 R_{g_0}=R_g=0
 \qquad\text{on }M\setminus\spt\eta.
$$
Thus in either case $\spt(R_{g_0}^{-})\Subset\Omega$. Lemma~\ref{lem:kazdan-supersolution} therefore supplies a smooth function $w>0$ and constants $c_0,C_0>0$ satisfying
\begin{equation}\label{eq:kazdan-bounded-supersolution}
 c_0\leq w\leq C_0,
 \qquad
 \mathcal L_{g_0}w>0
 \quad\text{on }M.
\end{equation}
Define
$$
 g_+:=w^{\frac{4}{n-2}}g_0.
$$
The conformal scalar-curvature formula gives
$$
 R_{g_+}
 =w^{-\frac{n+2}{n-2}}\mathcal L_{g_0}w>0.
$$
The two-sided bounds in \eqref{eq:kazdan-bounded-supersolution}, together with the uniform equivalence of $g_0$ and $g$, show that $g_+$ is uniformly equivalent to $g$. In particular, $g_+$ is complete.
\end{proof}

\subsection{An Evans-potential conformal deformation}

We first extract from the general Evans-potential theorem the boundary-normalized exterior potential used in the conformal construction.

\begin{lemma}[A smooth exterior Evans potential]\label{lem:exterior-Evans}
Let $(M,g)$ be a complete, one-ended, noncompact parabolic Riemannian manifold. Then there exist a connected exterior domain $E\subset M$ with smooth compact boundary and a function
$$
 u\in C^\infty(E)\cap C^\infty(\overline E)
$$
such that
\begin{equation}\label{eq:exterior-Evans-properties}
 \begin{aligned}
  &\Delta_gu=0\quad\text{on }E,
  \qquad
  u=0\quad\text{on }\partial E,
  \qquad
  u>0\quad\text{on }E,\\
  &u(x)\longrightarrow+\infty\quad\text{as }x\to\infty.
 \end{aligned}
\end{equation}
In particular, $u:\overline E\to[0,\infty)$ is proper.
\end{lemma}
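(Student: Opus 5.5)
The plan is to take a global Evans potential from the literature and then adjust it near its singular region so that it becomes smooth up to a smooth compact boundary, vanishes there, and is positive outside.

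The classical Evans--Selberg--Nakai theorem (Nakai's construction of an Evans potential on parabolic Riemann surfaces, extended to parabolic Riemannian manifolds by Sario--Nakai and later authors) supplies, on a complete parabolic manifold, a function $v$ with the following properties:
\begin{enumerate}[label=\textup{(\alph*)}]
\item $v$ is harmonic on $M\setminus \overline{B}$ for some compact set, or smooth ball, $B$;
\item $v$ is bounded below near $\partial B$;
\item $v(x)\to+\infty$ as $x\to\infty$.
\end{enumerate}
Completeness and one-endedness are used only to place us in that setting. Parabolicity is essential here: on a nonparabolic manifold no such proper harmonic exhaustion exists.

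Starting from $v$, I would construct $u$ as follows.
\begin{enumerate}[label=\textup{(\arabic*)}]
\item Choose a number $t$ larger than $\sup v$ on a fixed compact neighborhood $N$ of $\overline B$.
\item By Sard's theorem, choose $t$ additionally to be a regular value of $v$ restricted to $M\setminus N$.
\item Since $v\to+\infty$, the set $\{v\le t\}\cup N$ is compact. Hence $\{v=t\}$ is a smooth compact hypersurface.
\item Let $E$ be the unique unbounded component of $\{v>t\}\setminus N$. It is unique by one-endedness, exactly as in the filling argument of Lemma~\ref{lem:filled-properties}.
\item Note that $\partial E\subset\{v=t\}$, because $v>t$ fails on $N$.
\item Set $u:=v-t$.
\end{enumerate}
Then $u$ is harmonic on $E$, vanishes on $\partial E$, is positive in $E$, and tends to $+\infty$ at infinity. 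It is smooth up to $\partial E$ because $v$ is smooth in a neighborhood of $\partial E$, which lies away from $\overline B$. Connectedness of $E$ holds by construction.

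Properness of $u:\overline E\to[0,\infty)$ follows immediately. For every $T$, the set $u^{-1}([0,T])$ is closed in $\overline E$ and lies in $\{v\le t+T\}$, which is compact.

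The main obstacle is the existence of $v$ itself, that is, the Evans potential on a general complete parabolic manifold with no curvature assumptions. I intend to cite it rather than reprove it. A modern reference is Nakai's theorem generalized to Riemannian manifolds; see Sario--Nakai, \emph{Classification Theory of Riemann Surfaces}, and its $n$-dimensional versions. If a self-contained route is preferred, I would argue as follows.
\begin{enumerate}[label=\textup{(\roman*)}]
\item Take an exhaustion $\Omega_k$ and the harmonic measures $\omega_k$ of $\partial\Omega_k$ in $\Omega_k\setminus\overline B$.
\item Use parabolicity to get $\omega_k\to0$ locally uniformly, so that the capacities of $\partial B$ relative to $\Omega_k$ tend to $0$.
\item Pass to a subsequence with summable capacities.
\item Sum suitably normalized functions $1-\omega_k$, rescaled by inverse capacities so that their boundary fluxes stay bounded.
\item Use Harnack's inequality to obtain local convergence of the sum and divergence at infinity.
\end{enumerate}
The delicate point is ensuring simultaneously that the series converges on compact sets and diverges along every path to infinity. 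I would handle it with the standard diagonal choice, in which each new term has flux $2^{-k}$ but value at least $1$ outside $\Omega_{k+1}$.
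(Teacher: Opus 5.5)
Your main route is the paper's proof. You cite the existence of an Evans potential off a compact set, pick a large regular value, take the unbounded component of the superlevel set outside a compact neighborhood (one-endedness plus filling makes it an exterior domain), and subtract the level.

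\textbf{Step (1) needs a small repair.} Properties (a)--(c) give no upper bound for $v$ near $\partial B$; your (b) is the irrelevant direction. A positive harmonic function on $M\setminus\overline B$ tending to $+\infty$ at infinity may also blow up at $\partial B$, so $\sup_N v$ can be $+\infty$. You only need $t>\max_{\partial N}v$. This is finite because $\partial N$ is a compact subset of $M\setminus\overline B$, and it is exactly the paper's choice $c>\max_{\partial K_1}h$. With it, step (5) goes through:
\begin{itemize}
\item a point of $\partial E$ lies in $M\setminus\Int N$ and has $v\geq t$ by continuity, so it cannot lie on $\partial N$;
\item hence it lies in the open set $M\setminus N$, where $t$ is regular, and there $v=t$.
\end{itemize}
Correspondingly, in step (3) claim smoothness and compactness only for $\{v=t\}\setminus N$, not for the whole level set. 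Also, the Sario--Nakai book you name treats Riemann surfaces. You need a precise reference covering parabolic Riemannian manifolds of dimension three; the paper uses Hansen--Netuka.

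\textbf{Drop the self-contained fallback.} It has a gap exactly at the hard step.
\begin{itemize}
\item Each term built from $\omega_k$ is harmonic only on $\Omega_k\setminus\overline B$, and any extension past $\partial\Omega_k$ destroys harmonicity. So the series is not a harmonic function on $M\setminus\overline B$.
\item The function $1-\omega_k$ is large near $\partial B$, not at infinity.
\item The diagonal summation is the easy part. The entire difficulty is producing, for each $k$, a function that is harmonic on all of $M\setminus\overline B$, has flux $2^{-k}$, and is at least $1$ outside $\Omega_{k+1}$. Assuming such terms exist is essentially assuming the Evans potential.
\item Normalized limits of $\omega_k$ divided by the capacities do give a positive harmonic function on $M\setminus\overline B$ vanishing on $\partial B$. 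Nothing in that construction forces divergence along every path to infinity.
\end{itemize}
That uniform divergence is precisely the content of the Nakai and Hansen--Netuka theorems, which is why the paper cites rather than proves it.
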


\begin{proof}
Choose a smooth compact domain $K_0\Subset M$ with nonempty interior; such a set is nonpolar. By the theorem of Hansen--Netuka \cite{HansenNetuka}, there is a positive harmonic function
$$
 h:M\setminus K_0\longrightarrow(0,\infty)
$$
that tends to $+\infty$ at infinity. Choose a smooth compact domain $K_1$ with $K_0\Subset\Int K_1$. Since $h$ is smooth near $\partial K_1$, Sard's theorem allows us to choose a regular value
$$
 c>\max_{\partial K_1}h.
$$
Because $h\to+\infty$ at infinity, first choose a smooth compact domain containing $K_1$ outside which $h>c$. Fill all of its precompact complementary components. One-endedness then gives a smooth compact domain $K_2$ with $K_1\Subset\Int K_2$, connected complement, and
$$
 h>c\quad\text{on }M\setminus K_2.
$$
Let $E$ be the connected component of
$$
 \{x\in M\setminus K_1:h(x)>c\}
$$
that contains $M\setminus K_2$. Then $E$ is connected and $M\setminus E\subset K_2$, so $E$ is an exterior domain. The inequality $c>\max_{\partial K_1}h$ shows that $\partial E$ does not meet $\partial K_1$; hence
$$
 \partial E\subset\{h=c\}.
$$
Since $c$ is a regular value, $\partial E$ is a smooth compact hypersurface. Setting
$$
 u:=h-c\quad\text{on }E
$$
gives all assertions in \eqref{eq:exterior-Evans-properties}. Properness follows from $h(x)\to+\infty$ at infinity and compactness of $M\setminus E$.
\end{proof}

We now prove the metric bridge stated in the introduction.

\begin{proof}[Proof of Proposition~\ref{prop:nonparabolicization}]
Let $g$ be a complete metric with $R_g>0$. If $(M,g)$ is already nonparabolic, take $\widehat g=g$. We therefore assume that $(M,g)$ is parabolic. Lemma~\ref{lem:exterior-Evans} gives a smooth exterior domain $E$ and a proper harmonic function
\begin{equation*}
 u:E\longrightarrow[0,\infty)
\end{equation*}
satisfying \eqref{eq:exterior-Evans-properties}

Choose regular values $0<a<b$. Let
$$
 \eta:[0,\infty)\longrightarrow[0,1]
$$
be smooth and nondecreasing, with
$$
 \eta=0\quad\text{on }[0,a],
 \qquad
 \eta=1\quad\text{on }[b,\infty).
$$
For $\varepsilon>0$, set
\begin{equation*}
 F_\varepsilon(t):=1+\varepsilon\int_0^t\eta(s)\dd s.
\end{equation*}
Then
\begin{equation*}
 F_\varepsilon\geq1,
 \qquad
 F_\varepsilon'=\varepsilon\eta,
 \qquad
 F_\varepsilon''=\varepsilon\eta',
\end{equation*}
and $F_\varepsilon$ is affine with slope $\varepsilon$ on $[b,\infty)$. Define a global smooth positive function by
\begin{equation*}
 \phi:=
 \begin{cases}
  F_\varepsilon(u),&\text{on }E,\\
  1,&\text{on }M\setminus E.
 \end{cases}
\end{equation*}
This is smooth because $F_\varepsilon(u)=1$ throughout the collar where $u\leq a$. Put
\begin{equation*}
 \widehat g:=\phi^4g.
\end{equation*}

We first choose $\varepsilon$ so that $R_{\widehat g}>0$. The transition band
$$
 \mathcal K_{a,b}:=\{x\in E:a\leq u(x)\leq b\}
$$
is compact. Hence
$$
 R_*:=\min_{\mathcal K_{a,b}}R_g>0,
 \qquad
 Q_*:=\max_{\mathcal K_{a,b}}
 \eta'(u)|\nabla u|_g^2<\infty.
$$
Choose $\varepsilon>0$ so small that
\begin{equation}\label{eq:epsilon-choice}
 8\varepsilon Q_*<R_*.
\end{equation}
In dimension three, the conformal scalar-curvature formula is
\begin{equation*}
 R_{\widehat g}
 =\phi^{-5}\bigl(-8\Delta_g\phi+R_g\phi\bigr).
\end{equation*}
Since $u$ is harmonic,
\begin{equation}\label{eq:laplacian-phi}
 \Delta_g\phi
 =F_\varepsilon''(u)|\nabla u|_g^2
 =\varepsilon\eta'(u)|\nabla u|_g^2.
\end{equation}
On $\mathcal K_{a,b}$, equations \eqref{eq:epsilon-choice}--\eqref{eq:laplacian-phi} and $\phi\geq1$ give
$$
 -8\Delta_g\phi+R_g\phi
 \geq R_*-8\varepsilon Q_*>0.
$$
Outside $\mathcal K_{a,b}$ one has $F_\varepsilon''(u)=0$, and therefore
$$
 -8\Delta_g\phi+R_g\phi=R_g\phi>0.
$$
Thus $R_{\widehat g}>0$ everywhere. Moreover, $\phi\geq1$, so every divergent curve has at least its original $g$-length. The completeness of $g$ therefore implies the completeness of $\widehat g$.

It remains to prove that $\widehat g$ is nonparabolic. Define the compact set and its exhaustion by
\begin{align*}
 K_b&:=(M\setminus E)\cup\{x\in E:u(x)\leq b\},\\
 \Omega_T&:=(M\setminus E)\cup\{x\in E:u(x)<T\},
\end{align*}
where $T>b$ is a regular value. Properness of $u$ makes $\Omega_T$ precompact, and regular values tending to infinity give a smooth exhaustion of $M$. We use the capacity normalization
$$
 \operatorname{Cap}_{\widehat g}(K;\Omega)
 :=\inf\left\{\int_{\Omega\setminus K}|\nabla v|_{\widehat g}^2\dd V_{\widehat g}:
 v\in H^1(\Omega\setminus K),\ v|_{\partial K}=1,\ v|_{\partial\Omega}=0\right\},
$$
with no additional $4\pi$ factor. Set
\begin{equation*}
 I_T:=\int_b^T F_\varepsilon(s)^{-2}\dd s
\end{equation*}
and, on the annulus $\{b<u<T\}$, define
\begin{equation}\label{eq:capacity-potential}
 h_T(x):=
 \frac{\displaystyle\int_{u(x)}^T F_\varepsilon(s)^{-2}\dd s}
 {\displaystyle I_T},
\end{equation}
extending $h_T$ by $1$ on $K_b$. Then $h_T=0$ on $\partial\Omega_T=\{u=T\}$.

For any smooth one-variable function $H$, the conformal Laplacian formula for functions gives
\begin{equation*}
 \Delta_{\widehat g}H(u)
 =F_\varepsilon(u)^{-4}|\nabla u|_g^2
 \left(H''(u)+2\frac{F_\varepsilon'(u)}{F_\varepsilon(u)}H'(u)\right).
\end{equation*}
The one-variable function in \eqref{eq:capacity-potential} satisfies
$$
 H_T'(t)=-\frac{F_\varepsilon(t)^{-2}}{I_T},
 \qquad
 H_T''(t)+2\frac{F_\varepsilon'(t)}{F_\varepsilon(t)}H_T'(t)=0.
$$
Thus $h_T$ is $\widehat g$-harmonic on $\{b<u<T\}$. Every component of this annulus meets both boundary levels: otherwise $u$ would be harmonic on a precompact component with the single constant boundary value $b$ or $T$, contradicting the maximum principle. Hence the Dirichlet principle applies componentwise, and $h_T$ is the relative capacity potential of $K_b$ in $\Omega_T$.

For every regular value $t>0$, let
\begin{equation}\label{eq:Evans-flux}
 J(t):=\int_{\{u=t\}}|\nabla u|_g\dd A_g.
\end{equation}
For regular values $0<s<t$, the divergence theorem on the compact band $\{s<u<t\}$, summing over all components, gives $J(s)=J(t)$. Denote the common positive value by $J$. The conformal scaling laws in dimension three give
\begin{equation*}
 |\nabla h_T|_{\widehat g}^2\dd V_{\widehat g}
 =F_\varepsilon(u)^2|\nabla h_T|_g^2\dd V_g.
\end{equation*}
Using coarea, \eqref{eq:capacity-potential}, and \eqref{eq:Evans-flux}, we obtain the exact relative-capacity formula
\begin{align}
 \operatorname{Cap}_{\widehat g}(K_b;\Omega_T)
 &=\int_{\{b<u<T\}}|\nabla h_T|_{\widehat g}^2\dd V_{\widehat g}\notag\\
 &=J\int_b^T F_\varepsilon(t)^2H_T'(t)^2\dd t
 =\frac{J}{I_T}.
 \label{eq:relative-capacity-formula}
\end{align}
For $t\geq b$, the function $F_\varepsilon$ is exactly
\begin{equation*}
 F_\varepsilon(t)=F_\varepsilon(b)+\varepsilon(t-b).
\end{equation*}
Consequently,
\begin{equation*}
 I_\infty:=\int_b^\infty F_\varepsilon(t)^{-2}\dd t
 =\frac{1}{\varepsilon F_\varepsilon(b)}<\infty.
\end{equation*}
Relative capacities decrease to the global capacity along a smooth exhaustion; see \cite[Section~2]{HurtadoPalmerRitore}. Letting $T\to\infty$ through regular values in \eqref{eq:relative-capacity-formula} yields
\begin{equation*}
 \operatorname{Cap}_{\widehat g}(K_b)
 =\frac{J}{I_\infty}
 =J\varepsilon F_\varepsilon(b)>0.
\end{equation*}
A complete manifold is nonparabolic exactly when some compact set has positive capacity; see, for example, \cite[Chapter 7]{Grigoryan}. Hence $(M,\widehat g)$ is nonparabolic.
\end{proof}

\subsection{The combined metric-replacement theorem}

\begin{proof}[Proof of Theorem~\ref{thm:metric-replacement}]
Proposition~\ref{prop:kazdan-deformation} gives a complete metric $g_+$ on $M$, uniformly equivalent to $g$, such that $R_{g_+}>0$. Since $M$ is one-ended, Proposition~\ref{prop:nonparabolicization}, applied to $g_+$, gives a complete nonparabolic metric $\widehat g$ with $R_{\widehat g}>0$.
\end{proof}

Thus every nonflat branch of the two main theorems is reduced to the nonparabolic corollaries proved in Section~\ref{sec:low-genus-proof}. The remaining flat branch is handled directly in Section~\ref{sec:main-theorems}.

\section{Proof of the main theorems}\label{sec:main-theorems}

The nonflat branch now follows from metric replacement and the nonparabolic topological conclusions. The flat branch is independent of the Green function argument and is treated first.

\subsection{The flat branch}

The following elementary packing lemma is sufficient for the open-handlebody case.

\begin{lemma}[Growth of a complete flat manifold group]\label{lem:flat-group-growth}
Let $(N^n,h)$ be a complete flat Riemannian manifold with finitely generated fundamental group $\Gamma$. Then $\Gamma$ has polynomial growth of degree at most $n$.
\end{lemma}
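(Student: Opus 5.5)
The plan is a direct packing argument on the universal cover. Since $h$ is complete and flat, the Killing--Hopf theorem identifies the Riemannian universal cover $(\widetilde N,\widetilde h)$ isometrically with Euclidean space $\R^n$. Under this identification $\Gamma$ acts on $\R^n$ by deck transformations, which are Euclidean isometries. This action is free and properly discontinuous, and $N=\R^n/\Gamma$.

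First, fix a finite symmetric generating set $S\subset\Gamma$ and a base point $\tilde x\in\R^n$. Set
$$
 \lambda:=\max_{s\in S}|s\tilde x-\tilde x|.
$$
Because each element of $\Gamma$ is an isometry, the triangle inequality gives $|\gamma\tilde x-\tilde x|\leq\lambda|\gamma|_S$, where $|\gamma|_S$ is the word length. Hence every orbit point $\gamma\tilde x$ with $|\gamma|_S\leq k$ lies in the Euclidean ball $B(\tilde x,\lambda k)$.

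Second, choose $\delta>0$ so that the balls $B(\gamma\tilde x,\delta)$, $\gamma\in\Gamma$, are pairwise disjoint. Such a $\delta$ exists because the action is free and properly discontinuous: it suffices to take $\delta$ below half the injectivity radius of $N$ at the projection of $\tilde x$. The map $\gamma\mapsto\gamma\tilde x$ is injective by freeness. Therefore the disjoint balls of radius $\delta$ around the orbit points of word length at most $k$ all lie in $B(\tilde x,\lambda k+\delta)$. Comparing Euclidean volumes gives
$$
 \#\{\gamma:|\gamma|_S\leq k\}\leq\left(\frac{\lambda k+\delta}{\delta}\right)^n\leq Ck^n
 \qquad(k\geq1),
$$
which is polynomial growth of degree at most $n$.

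The only step needing care is justifying the uniform separation $\delta$. This is where completeness enters, through Killing--Hopf, and where proper discontinuity of the deck group enters, which yields a positive lower bound for $|\gamma\tilde x-\tilde x|$ over $\gamma\neq e$. Everything else is the triangle inequality and a volume count. No use of Bieberbach's theorem is needed, although that theorem would give the sharper statement that $\Gamma$ is virtually abelian.
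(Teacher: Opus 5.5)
Your proposal is correct and follows essentially the same route as the paper. Both arguments identify the universal cover with $\R^n$, take a uniform separation radius $\delta$ for the orbit from the free, properly discontinuous action, bound orbit displacement linearly in word length via the triangle inequality, and compare Euclidean volumes to get $O(k^n)$ growth; your choice of $\delta$ via the injectivity radius (for a flat quotient, half the minimal orbit displacement) just spells out a step the paper leaves implicit.
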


\begin{proof}
The universal cover of $(N,h)$ is a complete simply connected flat manifold and hence isometric to Euclidean space $\R^n$. After making this identification, $\Gamma$ acts freely and properly discontinuously on $\R^n$ by Euclidean isometries.

Fix $x\in\R^n$ and a finite symmetric generating set $S$ for $\Gamma$. Proper discontinuity and freeness give a number $\delta>0$ such that the balls
$$
 \{B(\gamma x,\delta):\gamma\in\Gamma\}
$$
are pairwise disjoint. Let
$
 C:=\max_{s\in S}d(x,sx).
$
If the word length of $\gamma$ is at most $k$, then
$
 d(x,\gamma x)\leq Ck.
$
Thus all balls $B(\gamma x,\delta)$ with $|\gamma|_S\leq k$ are contained in $B(x,Ck+\delta)$. Comparing Euclidean volumes gives
$$
 \#\{\gamma\in\Gamma:|\gamma|_S\leq k\}\,\omega_n\delta^n
 \leq\omega_n(Ck+\delta)^n.
$$
Therefore the word-growth function of $\Gamma$ is $O(k^n)$.
\end{proof}

\subsection{Proofs of the full topological theorems}

\begin{proof}[Proof of Theorem~\ref{thm:contractible-main}]
Suppose first that $\Ric_g\equiv0$. In dimension three the Riemann curvature tensor is algebraically determined by the Ricci tensor, so $\Rm_g\equiv0$. Since $M$ is contractible, it is simply connected. A complete simply connected flat manifold is isometric to Euclidean space; hence
$$
 (M,g)\cong(\R^3,g_{\mathrm{eucl}}),
$$
and in particular $M$ is diffeomorphic to $\R^3$.

Assume now that $\Ric_g\not\equiv0$. A contractible manifold is one-ended; see \cite[Lemmas 2.1--2.2]{YanZhu}. Theorem~\ref{thm:metric-replacement} therefore gives a complete nonparabolic metric $\widehat g$ on $M$ with $R_{\widehat g}>0$. Corollary~\ref{cor:nonparabolic-contractible}, applied to $(M,\widehat g)$, yields that
$
 M\cong_{\mathrm{diff}}\R^3.
$
\end{proof}

\begin{proof}[Proof of Theorem~\ref{thm:handlebody-main}]
Suppose first that $\Ric_g\not\equiv0$. The open handlebody $M_\gamma$ is one-ended, so Theorem~\ref{thm:metric-replacement} gives a complete nonparabolic metric $\widehat g$ with positive scalar curvature. Corollary~\ref{cor:nonparabolic-handlebody} then implies $\gamma\leq1$.

It remains to consider $\Ric_g\equiv0$. As above, $(M_\gamma,g)$ is flat. Since $M_\gamma$ deformation retracts onto a wedge of $\gamma$ circles,
$
 \pi_1(M_\gamma)\cong F_\gamma,
$
the free group of rank $\gamma$. Lemma~\ref{lem:flat-group-growth} shows that this group has polynomial growth. For $\gamma\geq2$, however, $F_\gamma$ has exponential word growth: with respect to its standard free generating set, the sphere of radius $k\geq1$ has
$
 2\gamma(2\gamma-1)^{k-1}
$
elements. Hence $\gamma\geq2$ is impossible, and again $\gamma\leq1$.
\end{proof}

\subsection*{Declaration of competing interest}
The author declares that he has no known competing financial interests in this paper.

\subsection*{Data availability}
No data were used for the research described in the article.

\end{document}